\newtheorem{definition}{Definition}[section]
\newtheorem{theorem}{Theorem}[section]
\newtheorem{proposition}{Proposition}[section]
\newtheorem{lemma}{Lemma}[section]
\newtheorem{cor}{Corollary}[section]
\numberwithin{equation}{section}
\begin{document}
\title[Second-order regularity for parabolic equations]{Optimal global second-order regularity and improved integrability for parabolic equations with variable growth}



\author{Rakesh Arora}
\address{Department of Mathematical Sciences, Indian Institute of Technology (IIT-BHU), Varanasi, 221005, Uttar Pradesh, India}
\email{rakesh.mat@itbhu.ac.in} 
\thanks{The first author acknowledges the support of the SERB Research Grant SRG/2023/000308, India, and Seed grant IIT(BHU)/DMS/2023-24/493.}

\author{Sergey Shmarev}
\address{Department of Mathematics, University of Oviedo, c/Federico Garc\'{i}a Lorca, Oviedo, 33007, Asturias, Spain}
\email[Corresponding author]{shmarev@uniovi.es}
\thanks{The second author acknowledges the support of the Research Grant MCI-21-PID2020-116287GB-I00, Spain}

\today

\keywords{Nonlinear parabolic equation, Nonstandard growth, Global higher integrability, Second-order regularity}

\subjclass[2020]{35K65, 35K67, 35B65, 35K55, 35K99}

\maketitle


\begin{abstract}
We consider the homogeneous Dirichlet problem for the parabolic equation

\[
u_t- \operatorname{div} \left(|\nabla u|^{p(x,t)-2} \nabla u\right)= f(x,t) + F(x,t, u, \nabla u)
\]
in the cylinder $Q_T:=\Omega\times (0,T)$, where $\Omega\subset \mathbb{R}^N$,
$N\geq 2$, is a $C^{2}$-smooth or convex bounded domain. It is assumed that
$p\in C^{0,1}(\overline{Q}_T)$ is a given function, and that the nonlinear
source $F(x,t,s, \xi)$ has a proper power growth with respect to $s$ and
$\xi$. It is shown that if $p(x,t)>\frac{2(N+1)}{N+2}$, $f\in L^2(Q_T)$,
$|\nabla u_0|^{p(x,0)}\in L^1(\Omega)$, then the problem has a solution
$u\in C^0([0,T];L^2(\Omega))$ with $|\nabla u|^{p(x,t)} \in
L^{\infty}(0,T;L^1(\Omega))$, $u_t\in L^2(Q_T)$, obtained as the limit of
solutions to the regularized problems in the parabolic H\"older space. The
solution possesses the following global regularity properties:
\[
\begin{split}
& |\nabla u|^{2(p(x,t)-1)+r}\in L^1(Q_T)\quad \text{for any $0 < r <
\frac{4}{N+2}$},
\qquad
|\nabla u|^{p(x,t)-2} \nabla u \in W^{1,2}(Q_T)^N.
\end{split}
\]
\end{abstract}

\section{Introduction}
We consider the homogeneous Dirichlet problem
\begin{equation}
\label{eq:main}
\begin{split}
& u_t-\operatorname{div}\left(|\nabla u|^{p(z)-2}\nabla
u\right)=f(z)+F(z,u,\nabla u)\quad \text{in $Q_T$},
\\
& \text{$u=0$ on $\partial\Omega\times (0,T)$},
\\
& \text{$u(x,0)=u_0(x)$ in $\Omega$},
\end{split}
\end{equation}
where $Q_T=\Omega\times (0,T)$ is the cylinder of a finite height $T$,
$\Omega\subset \mathbb{R}^N$ is a bounded domain with the $C^2$-smooth or
convex boundary $\partial\Omega$. Here and throughout the text we denote by
$z=(x,t)$ the points of the cylinder $Q_T$. The nonlinear source $F$ has the
form
\begin{equation}
\label{eq:source}
F(z,u,\nabla u)=a(z)|u|^{q(z)-2}u + |\nabla u|^{s(z)-2}(\vec c(z),\nabla u).
\end{equation}
In \eqref{eq:main}, \eqref{eq:source}, the exponents and coefficients $a$,
$\vec c$, $p$, $q$, $s$ are given functions whose properties will be specified
later.

The first objective of this work is to establish the global second-order
spatial differentiability of solutions to problem \eqref{eq:main}. We
prove that if $u_0\in W^{1,p(\cdot,0)}_0(\Omega)$ (the definition of the
variable Sobolev spaces is given in Section \ref{sec:spaces-results}), $f\in
L^2(Q_T)$, and the nonlinear source $F(z,u,\nabla u)$ has a proper growth with
respect to the second and the third arguments, then

\begin{equation}
\label{eq:intr-1}
\begin{split}
& u_t\in L^2(Q_T), \qquad \mathcal{F}(z,\nabla u)\equiv |\nabla u|^{p(z)-2}\nabla u\in W^{1,2}(Q_T)^N,
\\
& \|u_t\|_{L^2(Q_T)}+\|\mathcal{F}(z,\nabla u)\|_{W^{1,2}(Q_T)}\leq C
\end{split}
\end{equation}
with a constant depending only on the problem data, i.e., on
$\|u_0\|_{W^{1,p(\cdot,0)}(\Omega)}$, $\|f\|_{L^2(Q_T)}$, and the nonlinear
structure of the equation. For equations with nonstandard
$p(z)$-growth, this estimate furnishes a natural counterpart of the  classical
inclusion $u \in L^2(0,T; W_0^{2,2}(\Omega))$ for the solutions of
the heat equation, see, e.g., \cite[Ch.3, Sec.6]{LSU}.

In the case of constant $p$, the second-order regularity of solutions
to equations or systems of parabolic equations of the type
\begin{equation}
\label{eq:main-p}
u_t=\Delta_p u+f,\qquad p>1,
\end{equation}
were studied by many authors. We refer to \cite{Cianchi-Maz'ya-2019-1} for
the global estimates \eqref{eq:intr-1} and to
\cite{Feng-Parviainen-Sarsa-2022,Feng-Parviainen-Sarsa-2023,DeFilippis-2020,
Bogelein-2022, Duzaar-Mignione-Steffen-2011, Scheven-2010,
Seregin-Acerbi-Mignione-2004} for local results, see also references therein.
Paper \cite{Feng-Parviainen-Sarsa-2022} deals with the
homogeneous equation \eqref{eq:main-p}. In \cite{Feng-Parviainen-Sarsa-2023}, the local second-order regularity is proven for the viscosity solutions of a more general equation

\begin{equation}
\label{eq:p-normalized}
u_t-|\nabla u|^{\gamma}\left(\Delta u +(p-2)|\nabla u|^{-2}\sum_{i,j=1}^ND^2_{ij}u D_iu D_ju\right)=0
\end{equation}
under some restrictions on the values of the constant exponents $p$ and $\gamma$.  In the special case $\gamma=p-2$ equation \eqref{eq:p-normalized} coincides with \eqref{eq:main-p} with $f=0.$

The results on the second-order spatial regularity of solutions to
\eqref{eq:main-p} are customarily formulated in terms of inclusions

\begin{equation}
\label{eq:reg-prim}
|\nabla u|^{\sigma}\nabla u\in W^{1,2}(Q_T)^N \qquad \text{or
$W^{1,2}_{loc}(Q_T)^N$}
\end{equation}
with some $\sigma$ depending on $p$. In \cite{Cianchi-Maz'ya-2019-1},
inclusion \eqref{eq:reg-prim} with $\sigma=p-2$ is proven for the approximable
solutions such that the solution and its gradient can be obtained as the pointwise
limits of solutions and gradients of solutions to problems \eqref{eq:main-p}
with regularized fluxes and smooth data. The proof in
\cite{Cianchi-Maz'ya-2019-1} crucially depends upon a differential inequality,
which links the Hessian  matrix and the Laplacian of a smooth function and
furnishes a lower bound for the square of the $p$-Laplacian - see \cite[Lemma
3.1]{Cianchi-Maz'ya-2018}.

As it is mentioned in \cite[Remark 2.3]{Cianchi-Maz'ya-2019-1}, the global
estimate \eqref{eq:intr-1} with $\sigma=p-2$ is sharp for the solutions to
problem \eqref{eq:main-p} because in the case $u_0=0$ the norms in
\eqref{eq:intr-1} are bounded below and above by $C_i\|f\|_{L^2(Q_T)}$ with
$C_i=const$.

It is shown in \cite{Feng-Parviainen-Sarsa-2022} that for the weak solutions of equation \eqref{eq:main-p} with $p>1$ $D_i\left(|\nabla u|^{\frac{p-2+s}{2}}D_ju\right)\in L^2_{loc}(Q_T)$ with any $s>-1$. Moreover, a counterexample shows that the inequality $s>-1$ is sharp, see \cite[Remark 2.4]{Feng-Parviainen-Sarsa-2022}.

The inclusion and estimate \eqref{eq:intr-1} proven in our present work are optimal in the following sense. On the one hand, for the solution of
problem \eqref{eq:main} with $F=0$, $u_0=0$ and $f\in L^2(Q_T)$ estimate
\eqref{eq:intr-1} holds with a constant $C$ depending on $\|f\|_{L^2(Q_T)}$.
On the other hand, by virtue of equation \eqref{eq:main}, it is necessary that
$f\in L^2(Q_T)$ for every solution satisfying \eqref{eq:intr-1}.

There is an immense literature on the second-order regularity of solutions to
equations and systems of elliptic nonlinear equations with constant structure.
For relevant results and an exhaustive review of the available literature, we
refer to \cite{Cianchi-Maz'ya-2018, Cianchi-Maz'ya-2019, BCDM-2023}. Results
on the local second-order regularity of solutions to elliptic equations with
$p(x)$-Laplacian can be found in \cite{Wang-2023, Challal-2011}.

Coming back to equation \eqref{eq:main} with the variable exponent $p(z)$, the global
second-order spatial regularity is proven in \cite{Arora-Shm-2021} for
$F\equiv 0$, and in \cite{Arora-Shm-ANONA-2023,Arora-Shm-RACSAM-2023} for
equations with a double-phase flux of variable growth and nonlinear sources.
It is shown in \cite{Arora-Shm-2021} that
\begin{equation}
\label{eq:intr-2}
|\nabla u|^{\frac{p(z)-2}{2}}\nabla u\in W^{1,2}(Q_T)^N,
\end{equation}
provided that $f\in L^2(0,T;W^{1,2}_0(\Omega))$ and $|\nabla
u_0|^{\max\{2,p(x,0)\}}\in L^1(\Omega)$. A specific feature of
equation \eqref{eq:main} with variable structure is that differentiation of
the flux creates first-order residual terms of higher growth, which cannot
be controlled through the principal energy inequality. Nonetheless, these
terms can be estimated  due to the property of global higher integrability of
the gradient  -  see \cite[Theorem 2.3]{Arora-Shm-RACSAM-2023}:
\begin{equation}
\label{eq:high-integr}
|\nabla u|^{p(z)+\delta}\in L^1(Q_T)\quad
\text{with some $\delta>0$}.
\end{equation}
For the solutions of equation \eqref{eq:main} with the regularized flux
\[
\mathcal{F}_{\epsilon}(z,\nabla u)\equiv (\epsilon^2+|\nabla
u|^2)^{\frac{p(z)-2}{2}}\nabla u,\qquad \epsilon>0,
\]
the global property \eqref{eq:high-integr} stems from the following
interpolation inequality: if $\partial\Omega\in C^2$, then for every $v\in
C^2(\overline \Omega)\cap C^3(\Omega)$, $\delta>0$, an exponent $q \in
C^{0,1}(\overline{\Omega})$ satisfying $q(x)>\frac{2N}{N+2}$, and any $r\in
\left(0,\frac{4}{N+2}\right)$,
\begin{equation}
\label{eq:intr-3}
\int_{\Omega}(\epsilon^2+|\nabla v|^2)^{\frac{q(x)-2+r}{2}}|\nabla v|^2\,dx\leq \delta
\int_{\Omega}(\epsilon^2+|\nabla v|^2)^{\frac{q(x)-2}{2}}\sum_{i,j=1}^N\left(D^2_{ij}v\right)^2\,dx+C
\end{equation}
with a constant $C$ depending on $\delta$, $N$, $r$, $q$, and
$\|v\|_{L^2(\Omega)}$. Inequality \eqref{eq:intr-3} was first proven in
\cite{Arora-Shm-2021} for $r$ in the interval
$\left(0,\frac{4p^-}{p^-(N+2)+2N}\right)$ with $p^-=\min p$ and then refined
in \cite{Arora-Shm-RACSAM-2023} to the form used in the present work.

A solution to problem \eqref{eq:main} with the regularized flux
$\mathcal{F}_{\epsilon}(z,\nabla u)$ was constructed in \cite{Arora-Shm-2021}
as the limit of the sequence of finite-dimensional Galerkin's approximations
in the basis composed of the eigenfunctions of the Dirichlet problem for the
Laplace operator. The regularity of eigenfunctions is defined by the
regularity of the boundary $\partial\Omega$, and the sequence of
finite-dimensional approximations is a sequence of smooth functions that
converges to the solution in a proper variable Sobolev space. Inclusion
\eqref{eq:intr-2} follows then from the uniform estimates on the regularized
fluxes.  The same scheme of arguments was used in \cite{Arora-Shm-ANONA-2023,
Arora-Shm-RACSAM-2023} to prove the global second-order differentiability of
solutions to equations with double-phase fluxes of variable growth. A drawback
of this approach is that the a priori estimates for the approximations
require, in general, an extra regularity of the free term $f$ and the initial
datum $u_0$.

For the solutions of the singular equation \eqref{eq:main-p} with variable
$p(z)\in \left(\frac{2N}{N+2},2\right]$, the second-order regularity was
studied in \cite{Ant-Shm-AA-2017,Ant-Shm-NA-2020}, see also
\cite{Scheven-2010} for the case of constant $p$. In this special range of
values of $p(z)$, the inclusions $D^2_{ij}u\in L^2(\Omega\times (s,T))$ with
any $s>0$ and $|D^2_{ij}u|^{p(z)}\in L^{1}(Q_T)$ can be achieved without
higher integrability of the gradient \eqref{eq:high-integr}.

In the present work, we prove the inclusion and estimate \eqref{eq:intr-1} under the assumptions $f\in L^2(Q_T)$, $|\nabla u_0|^{p(x,0)}\in L^1(\Omega)$ and $\min p(z)>\frac{2(N+1)}{N+2}$. The proof is based on a combination of \eqref{eq:intr-3} with the inequality
\begin{equation}
\label{eq:intr-4}
\int_\Omega (\epsilon^2+|\nabla v|^2)^{p(z)-2}\sum_{i,j=1}^{N}\left(D^2_{ij}v\right)^2\,dx\leq C_1\int_{\Omega}\left(\operatorname{div}\mathcal{F}_\epsilon(z,\nabla v)\right)^2\,dx +C_2
\end{equation}
where the constants $C_i$ depend on $\max p$, $\min p$, $\sup |\nabla p|$,
$\|v\|_{L^2(\Omega)}$ and $\partial\Omega$. An immediate
by-product of \eqref{eq:intr-3}-\eqref{eq:intr-4} is the inequality
\begin{equation}
\label{eq:intr-5}
\int_{Q_T}|\nabla u|^{2(p(z)-1)+r}\,dz\leq C\qquad \text{for any $r\in \left(0,\dfrac{4}{N+2}\right)$}
\end{equation}
with a constant $C$ depending only on the data and $r$. Inequality \eqref{eq:intr-5} improves \eqref{eq:high-integr} for $p(z)>2$.

Inequality \eqref{eq:intr-4} holds under the same
assumptions on $v$ and $\partial\Omega$ as \eqref{eq:intr-3}. However,
unlike \eqref{eq:intr-3} inequality \eqref{eq:intr-4} cannot be derived for
Galerkin's approximations, which makes necessary another scheme of approximation of the solution to \eqref{eq:main}.

This is the second objective of our work: to prove that a solution to the
regularized problem \eqref{eq:main} can be obtained as the limit of classical
solutions to the problem with the regularized flux $\mathcal{F}_{\epsilon}$
and the data. Such an approach to construction of a solution - {\sl an
approximable solution} in the terminology of \cite{Cianchi-Maz'ya-2019-1} - is
traditional for problems with constant nonlinearity. To the best of our
knowledge, it is new for problems with the $p(z)$-Laplace operator.  It is an alternative to the approaches based on the finite-dimensional Galerkin's
approximations or regularization of the flux by $|\nabla u|^{p(z)-2}\nabla u + \epsilon |\nabla u|^{p^+-2}\nabla u$, see \cite{Ant-Shm-2009,Al-Zhi-2010,Die-Nag-Ruz-2012} for the existence of
weak solutions to the Dirichlet problem for equation \eqref{eq:main-p} with
the exponent $p(z)$ depending on $t$ and the data $f\in L^2(Q_T)$, $u_0\in L^{2}(\Omega)$.

A classical solution to the problem with the regularized data is obtained as a
fixed point of a mapping generated by a linear problem associated with
\eqref{eq:main}. The existence of a fixed point follows from the
Leray-Schauder principle. The cornerstone of the proof is the recent result of
\cite{Ding-Zhang-Zhou-2020} about the H\"older continuity of the gradients of
weak solutions to equation \eqref{eq:main} with variable nonlinearity and
nonlinear sources of general form. See also \cite{Bogelein-Duzaar-2011,OK-2018} for results on the local H\"older-continuity of the spatial gradient of solutions to the evolution $p(x,t)$ and $p(t)$-Laplacian equation and systems without sources.

The main results of the work are summarized as follows. Assume that
$\partial\Omega \in C^{2+\alpha}$, $p\in C^{0,1}(\overline{Q}_T)$ with $\min
p(z)>\frac{2(N+1)}{N+2}$, the
nonlinear source $F(z,u,\nabla u)$ is subject to proper power growth conditions
in the second and third arguments, and the coefficients and exponents of
nonlinearity of $F$ belong to $C^0(\overline{Q}_T)$.

\begin{itemize}
\item For every $u_0\in W^{1,p(\cdot,0)}_0(\Omega)$, $f\in L^2(Q_T)$ a
solution of problem  \eqref{eq:main} can be obtained as the limit of the
a sequence of classical solutions to the regularized problems, which belong to
the H\"older space $C^{2+\gamma,1+\frac{\gamma}{2}}(\overline{Q}_T)$ with some
$\gamma\in (0,1)$. \item The constructed solution has the properties
\[
u\in C^0([0,T];L^2(\Omega)),\quad u_t\in L^2(Q_T), \quad |\nabla u|^{p(z)}\in L^\infty(0,T;L^1(\Omega)).
\]
\item The solution has the second-order spatial regularity in the sense
\eqref{eq:intr-1} and possesses the property of global higher integrability of the
gradient \eqref{eq:intr-5}. \item All results hold true for the convex domains
$\Omega$ without assumptions on the regularity of $\partial\Omega$, and for
$C^2$-domains.
\end{itemize}

In the proofs, we do not distinguish between the cases $p(z)\geq 2$ and
$p(z)\leq 2$. All conclusions hold true for the variable exponent $p(z)$ which
is allowed to vary within the interval
$\left(\left.\frac{2(N+1)}{N+2},p^+\right]\right.$.

 \section{Function spaces, assumptions, and main results}
\label{sec:spaces-results}

\subsection{Variable Lebesgue and Sobolev spaces}
To formulate the results we have to introduce the function spaces the solution of problem \eqref{eq:main} belongs to. We collect the most necessary facts from the theory of these spaces, for a detailed insight we refer to the monograph \cite{DHHR-2011}.

Let $\Omega$ be a bounded domain with the Lipschitz continuous boundary
$\partial \Omega$ and $p: \Omega \to [p^-, p^+] \subset (1,
\infty)$ be a measurable function. Let us define the functional (the modular)

\[
A_{p(\cdot)}(f)=
\int_{\Omega} |f(x)|^{p(x)} \,dx.
\]
The set
\[
L^{p(\cdot)}(\Omega) = \{f:\Omega
\to \mathbb{R}:\text{$f$ is measurable on $\Omega$}, A_{p(\cdot)}(f) <
\infty\}
\]
equipped with the Luxemburg norm
\[
\|f\|_{p(\cdot),\Omega}= \inf \left\{\lambda>0 :
A_{p(\cdot)}\left(\dfrac{f}{\lambda}\right) \leq 1\right\}
\]
is a reflexive and separable Banach space and $C_0^\infty(\Omega)$ (the set of smooth functions with compact support) is dense in $L^{p(\cdot)}(\Omega)$. The modular $A_{p(\cdot)}(f)$ is lower semicontinuous. By the definition of the norm

\begin{equation}
\label{i0}
\min\{\|f\|^{p^-}_{p(\cdot), \Omega}, \|f\|^{p^+}_{p(\cdot), \Omega}\}
\leq A_{p(\cdot)}(f) \leq \max\{\|f\|^{p^-}_{p(\cdot), \Omega}, \|f\|^{p^+}_{p(\cdot), \Omega}\}.
\end{equation}
The dual of $L^{p(\cdot)}(\Omega)$ is the space $L^{p'(\cdot)}(\Omega)$ with the conjugate exponent  $p'=\dfrac{p}{p-1}$. For $f \in L^{p(\cdot)}(\Omega)$ and $g \in L^{p'(\cdot)}(\Omega)$, the generalized
H\"older inequality holds:

\begin{equation}
\label{eq:gen-Holder}
\int_{\Omega} |fg| \leq
\left(\frac{1}{p^-} + \frac{1}{(p')^-} \right) \|f\|_{p(\cdot),
\Omega} \|g\|_{p'(.), \Omega} \leq 2 \|f\|_{p(\cdot), \Omega}
\|g\|_{p'(\cdot), \Omega}.
\end{equation}
Let $p_1, p_2$ be two bounded
measurable functions in $\Omega$ such that $1<p_1(x) \leq p_2(x)$ a.e. in $\Omega$.  Then $L^{p_1(\cdot)}(\Omega)$ is continuously
embedded in $L^{p_2(\cdot)}(\Omega)$ and

\[
\|u\|_{p_1(\cdot), \Omega} \leq C(|\Omega|,
p_1^\pm, p_2^\pm) \|u\|_{p_2(\cdot), \Omega}, \qquad \forall \,u \in
L^{p_2(\cdot)}(\Omega).
\]
The variable Sobolev space $W_0^{1,p(\cdot)}(\Omega)$ is defined as the set of functions

\[
W_0^{1,p(\cdot)}(\Omega)= \{u: \Omega \mapsto
\mathbb{R}\ |\  u \in L^{p(\cdot)}(\Omega) \cap W_0^{1,1}(\Omega),
|\nabla u| \in L^{p(\cdot)}(\Omega) \}
\]
equipped with the norm

\[
\|u\|_{W_0^{1,p(\cdot)}(\Omega)}= \|u\|_{p(\cdot),\Omega} +
\|\nabla u\|_{p(\cdot), \Omega}.
\]
Under the assumptions $p\in C^{0}(\overline \Omega)$ and $\partial\Omega\in Lip$, the
Poincar\'{e} inequality holds: there is a finite constant $C>0$ such that

\[
\|v\|_{p(\cdot),\Omega}\leq C\|\nabla v\|_{p(\cdot),\Omega}\qquad \forall \ v\in W_0^{1,p(\cdot)}(\Omega).
\]
It follows that $\|\nabla v\|_{p(\cdot),\Omega}$ is an equivalent norm of
$W^{1,p(\cdot)}_0(\Omega)$.

It is known that $C_0^{\infty}(\Omega)$ is dense in
$W_0^{1,p(\cdot)}(\Omega)$,
provided $p
\in C_{{\rm log}}(\overline{\Omega})$, {\it i.e.}, $p$ is continuous in $\overline{\Omega}$ with the logarithmic modulus of continuity:

\begin{equation}
\label{eq:log-cont}
|p(x_1)-p(x_2)| \leq \omega(|x_1-x_2|),
\end{equation}
where $\omega(\cdot)$ is a nonnegative function satisfying the condition
\[
\limsup_{\tau \to 0^+} \omega(\tau) \ln
\left(\frac{1}{\tau}\right)= C< \infty.
\]
For the study of parabolic problem \eqref{eq:main}, we need the spaces of functions depending on $(x,t)\in Q_T$:
\[
\begin{split}
 & \mathbf{V}_t(\Omega) = \{u: \Omega \mapsto \mathbb{R}\ |\  u \in L^2(\Omega)
\cap W_0^{1,1}(\Omega), |\nabla u|^{p(x,t)} \in L^{1}(\Omega)
\},\quad t\in (0,T),
\\
& \mathbf{W}_{p(\cdot)}(Q_T) = \{u : (0,T) \mapsto \mathbf{V}_t(\Omega) \ |\ u \in L^2(Q_T), |\nabla
u|^{p(x,t)} \in L^1(Q_T)\}.
\end{split}
\]

\subsection{The parabolic H\"older spaces}
\label{subsec:Holder-spaces}
Given two points $z_i=(x_i,t_i)\in Q_T$,
$i=1,2$, we denote by

\[
\rho(z_1,z_2)=|x_2-x_1|+|t_2-t_1|^{\frac{1}{2}}
\]
the parabolic distance. The space of H\"older-continuous functions
$C^{\alpha,\frac{\alpha}{2}}(\overline{Q}_T)$, $\alpha\in (0,1)$,  is the collection
of all functions with the finite norm
\[
\|u\|_{\alpha,\frac{\alpha}{2},Q_T}=|u|_{0,Q_T}+\left[ u
\right]_{\alpha,\frac{\alpha}{2},Q_T},
\]
where
\[
|u|_{0,Q_T}=\sup_{Q_T}|u|,\qquad \left[ u
\right]_{\alpha,\frac{\alpha}{2},Q_T}= \sup_{z_1,z_2\in Q_T,
z_1\not=z_2}\frac{|u(z_1)-u(z_2)|}{\rho^{\alpha}(z_1,z_2)}.
\]
The spaces $C^{2+\alpha,1+\frac{\alpha}{2}}(\overline{Q}_T)$ and
$C^{1+\alpha,\frac{1+\alpha}{2}}(\overline{Q}_T)$ are the spaces of functions
with the  finite norms

\[
\|u\|_{2+\alpha,1+\frac{\alpha}{2},Q_T}=\sum_{0\leq i+2j\leq
2}\left|D_t^jD_x^{i}u\right|_{0,Q_T} + \sum_{i+2j= 2}\left[ D_t^jD_x^{i}u
\right]_{\alpha,\frac{\alpha}{2},Q_T},
\]

\[
\|u\|_{1+\alpha,\frac{1+\alpha}{2},Q_T}=|u|_{0,Q_T}+\sum_{i=1}^N|D_iu|_{0,Q_T}
+\sum_{i=1}^N\left[ D_iu\right]_{\alpha,\frac{\alpha}{2},Q_T}
+\sup_{t\not=s,x}\dfrac{|u(x,t)-u(x,s)|}{|t-s|^{\frac{1+\alpha}{2}}}.
\]

We will use the known properties of the H\"older
spaces on smooth domains:
\begin{enumerate}
\item for every $v\in C^{1+\alpha,\frac{1+\alpha}{2}}(\overline{Q}_T)$

\[
\|\nabla v\|_{\alpha,\frac{\alpha}{2},Q_T}\leq
C\|v\|_{1+\alpha,\frac{1+\alpha}{2},Q_T};
\]

\item for every $u\in C^{2+\alpha,1+\frac{\alpha}{2}}(\overline{Q}_T)$
\[
\|\nabla u\|_{\alpha,\frac{\alpha}{2},Q_T}\leq
\|u\|_{1+\alpha,\frac{1+\alpha}{2},Q_T} \leq
C\|u\|_{2+\alpha,1+\frac{\alpha}{2},Q_T};
\]

\item the embedding $C^{2+\alpha,1+\frac{\alpha}{2}}(\overline{Q}_T)\subset
C^{1+\gamma,\frac{1+\gamma}{2}}(\overline{Q}_T)$ is compact for any $\gamma\in
(0,1)$;

    \item for every $f,g\in C^{k+\alpha,\frac{k+\alpha}{2}}(\overline Q_T)$, $k=0,1,2$,

    \begin{equation}
    \notag
    \label{eq:prod-holder}
    fg\in C^{k+\alpha,\frac{k+\alpha}{2}}(\overline Q_T),\qquad \|fg\|_{k+\alpha,\frac{k+\alpha}{2},Q_T}
    \leq \|f\|_{k+\alpha,\frac{k+\alpha}{2},Q_T}\|g\|_{k+\alpha,\frac{k+\alpha}{2},Q_T}.
    \end{equation}
\end{enumerate}

By $C^{0,1}(\overline Q_T)$ we denote the space of Lipschitz-continuous functions: a function $\phi$ is Lipschitz-continuous on $\overline Q_T$ if there exists a finite constant $L\geq 0$ such that

\[
|\phi(x,t)-\phi(y,\tau)|\leq Ld\left((x,t),(y,t)\right)\qquad \forall
(x,t),(y,\tau)\in \overline{Q}_T,
\]
where $d(\cdot,\cdot)$ is the Euclidean distance in $\mathbb{R}^{N+1}$.

\subsection{Assumptions and main results}
We assume that the exponents of nonlinearity and the coefficients in equation \eqref{eq:main} satisfy the following conditions:
\[
p\in C^{0,1}(\overline{Q}_T),\quad  q,s,a \in C^{0}(\overline Q_T), \quad \vec c \in C^0(\overline Q_T)^N,
\]
there exist positive constants $p^\pm$, $q^\pm$, $s^\pm$, $\mu$, $a^+$, $c_i^+$ such that $2\mu<p^--1$,
\begin{equation}
\label{eq:p}
\begin{split}
& p:\,\overline{Q}_T\mapsto [p^-,p^+],\qquad \dfrac{2(N+1)}{N+2}<p^-\leq
p^+<\infty,
\end{split}
\end{equation}

\begin{equation}
\label{eq:q}
q:\,\overline{Q}_T\mapsto [q^-,q^+], \quad 1<q^-\leq q^+<\min\left\{p^-,1+p^-\frac{N+2}{2N}\right\}-2\mu,
\end{equation}

\begin{equation}
\label{eq:s}
\begin{split}
& s:\,\overline{Q}_T\mapsto [s^-,s^+], \quad 1<s^-\leq s^+<\infty,\quad s^+\leq p^--2\mu,
\end{split}
\end{equation}

\begin{equation}
\label{eq:data}
\max_{\overline{Q}_T}|a|=a^+,\quad
\max_{\overline{Q}_T}|c_i|=c_i^+.
\end{equation}

\begin{definition}
\label{def:1} A function $u$ is called strong solution of problem
\eqref{eq:main} if

\begin{enumerate}
\item $u\in C^0([0,T];L^2(\Omega))\cap
\mathbf{W}_{p(\cdot)}(Q_T)$, $u_t\in L^2(Q_T)$,

    \item for every $\phi\in \mathbf{W}_{p(\cdot)}(Q_T)$

    \begin{equation}
    \label{eq:def-1}
    \int_{Q_T}\left(u_t\phi+|\nabla u|^{p(z)-2}\nabla u\cdot \nabla \phi-f\phi -F(z,u,\nabla u)\phi\right)\,dz=0,
    \end{equation}

    \item for every $\phi\in C^0(\Omega)$

    \[
    \int_\Omega (u(x,t)-u_0(x))\phi(x)\,dx\to 0\quad \text{as $t\to 0^+$}.
    \]
\end{enumerate}
\end{definition}

\textbf{Notation.} By agreement, we denote $p_0(x)=p(x,0)$ and use the
abbreviations

\[
|v_{xx}|^2=\sum_{i,j=1}^N\left(D_{ij}^2v\right)^2,\qquad \|v\|_{L^s(\omega)}=\|v\|_{s,\omega}.
\]

The notation $\partial \Omega\in C^{2+\alpha}$ or $C^2$ means that for every
$x_0\in \partial\Omega$ there exists a ball $B_R(x_0)$ such that in the local
coordinate-system $\{y_i\}$ centered at $x_0$ with $y_N$ pointing in the
direction of the exterior normal to $\partial\Omega$ at $x_0$, the set
$B_R(x_0)\cap \partial\Omega$ can be represented as the graph of a function
$\phi(y_1,\ldots,y_{N-1})\in C^{2+\alpha}$ or $C^2$.

The symbol $C$ stands for constants that can be evaluated through the known quantities but whose exact values are unimportant. The value of $C$ may vary from line to line even inside the same formula.

\begin{theorem}
\label{th:existence-degenerate} Assume that conditions
\eqref{eq:p}-\eqref{eq:data} are fulfilled and $\partial\Omega\in
C^{2+\alpha}$, $\alpha\in (0,1)$. Then  for every $f \in L^2(Q_T)$ and
$u_{0}\in W_0^{1,p_0(\cdot)}(\Omega)$ problem \eqref{eq:main} has a strong
solution $u$. The solution satisfies the estimate

\begin{equation}
\label{eq:unif-est-degenerate}
\begin{split}
\operatorname{ess}\sup_{(0,T)}\|u\|_{2,\Omega} & +\operatorname{ess}\sup_{(0,T)}\|\nabla u\|_{p(\cdot),\Omega}+\|u_{t}\|_{2,Q_T}
\leq C
\end{split}
\end{equation}
with a constant $C$ depending only on  $N$, $\partial\Omega$, the structural
constants in conditions \eqref{eq:p}-\eqref{eq:data}, $\|\nabla
u_{0}\|_{p_0(\cdot),\Omega}$ and  $\|f\|_{2,Q_T}$. Moreover, the solution
possesses the property of  global higher integrability of the gradient:

\begin{equation}
\label{eq:higher-m} \int_{Q_T}|\nabla u|^{2(p(z)-1)+r}\,dz\leq C\qquad \text{with
any $r\in \left(0,\frac{4}{N+2}\right)$}
\end{equation}
and a constant $C$ depending on $r$ and the same quantities as the constant in \eqref{eq:unif-est-degenerate}.
\end{theorem}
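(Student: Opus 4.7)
I would construct the strong solution as the limit of classical solutions to a regularized problem, following the approximable-solution strategy advertised in the introduction. Specifically, replace the degenerate flux by
\[
\mathcal{F}_\epsilon(z,\nabla u)=(\epsilon^2+|\nabla u|^2)^{(p(z)-2)/2}\nabla u,
\]
mollify $f$ and $u_0$ to smooth $f_\epsilon,u_0^\epsilon$ with $f_\epsilon\to f$ in $L^2(Q_T)$ and $\nabla u_0^\epsilon$ bounded in $L^{p_0(\cdot)}(\Omega)$, and solve the regularized problem in the class $C^{2+\gamma,1+\gamma/2}(\overline{Q}_T)$ for some $\gamma=\gamma(\epsilon)\in(0,1)$. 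For fixed $\epsilon>0$ the equation is non-degenerate and its solvability will be tackled by the Leray-Schauder fixed-point theorem applied to the map $\mathcal{T}_\sigma:v\mapsto u$ in which $\nabla v$ is frozen in the principal part and the source is scaled by $\sigma\in[0,1]$. Compactness of $\mathcal{T}_\sigma$ follows from the H\"older gradient estimate of Ding-Zhang-Zhou combined with the linear Schauder theory for the frozen equation, giving a uniform $C^{1+\alpha,(1+\alpha)/2}$-bound; a $\sigma$-independent $L^\infty$-bound obtained by Moser iteration, with the subcritical growth \eqref{eq:q}--\eqref{eq:s} of the source absorbed by the principal part, supplies the homotopy estimate.

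\textbf{$\epsilon$-independent estimates.} Once $u_\epsilon$ is in hand, estimate \eqref{eq:unif-est-degenerate} should be derived in two stages. Testing the regularized equation with $u_\epsilon$ yields the basic energy inequality controlling $\operatorname{ess\,sup}_t\|u_\epsilon\|_{2,\Omega}$ and $\int_{Q_T}(\epsilon^2+|\nabla u_\epsilon|^2)^{p(z)/2}\,dz$, the source absorbed by Young's inequality thanks to \eqref{eq:q}--\eqref{eq:data}. Testing with $u_{\epsilon,t}$ produces $\|u_{\epsilon,t}\|_{2,Q_T}$ together with the time derivative of the Lagrangian $p(z)^{-1}(\epsilon^2+|\nabla u_\epsilon|^2)^{p(z)/2}$; the residual term generated by $\partial_t p$ is of the order $|\nabla u_\epsilon|^{p(z)}\log(\epsilon^2+|\nabla u_\epsilon|^2)$ and is controlled by the higher-integrability output \eqref{eq:higher-m}. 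The real closure comes from \eqref{eq:intr-4}: rewriting the equation as $\operatorname{div}\mathcal{F}_\epsilon(z,\nabla u_\epsilon)=u_{\epsilon,t}-f_\epsilon-F(z,u_\epsilon,\nabla u_\epsilon)$ and squaring gives an $L^2$-bound on the divergence, which via \eqref{eq:intr-4} is transferred into a weighted $L^2$-bound on the Hessian, and then \eqref{eq:intr-3} produces \eqref{eq:higher-m}. The restriction $p^->\frac{2(N+1)}{N+2}$ enters here: it guarantees that the admissible range $r\in(0,\frac{4}{N+2})$ allows $2(p(z)-1)+r$ to dominate the residual term of order $|\nabla u_\epsilon|^{p^+}$ coming from the $F$-contribution, closing the coupled chain of estimates by Young's inequality with a small parameter.

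\textbf{Limit passage.} With \eqref{eq:unif-est-degenerate}--\eqref{eq:higher-m} at hand uniformly in $\epsilon$, a subsequence $u_\epsilon$ converges weakly-$\ast$ in $L^\infty(0,T;L^2(\Omega))$, $u_{\epsilon,t}\rightharpoonup u_t$ in $L^2(Q_T)$, $\nabla u_\epsilon\rightharpoonup\nabla u$ in $L^{p(\cdot)}(Q_T)$. Aubin-Lions supplies strong $L^2(Q_T)$-convergence of $u_\epsilon$, while the uniform bound of $\mathcal{F}_\epsilon(z,\nabla u_\epsilon)$ in $W^{1,2}(Q_T)^N$ gives its strong $L^2$-convergence. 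A monotonicity argument (Minty's trick applied on the $\epsilon$-regularized monotone vector field, followed by $\epsilon\to 0$) identifies the limit with $|\nabla u|^{p(z)-2}\nabla u$; the subcritical growth of $F$ from \eqref{eq:q}--\eqref{eq:s} together with a.e. convergence of $\nabla u_\epsilon$ extracted from the Hessian bound lets the source pass through the limit via Vitali's theorem. The higher integrability \eqref{eq:higher-m} survives by Fatou.

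\textbf{Main obstacle.} The principal difficulty is the interdependence of the a priori estimates: controlling $u_{\epsilon,t}$ in $L^2$ requires a bound on $\int_{Q_T}|\nabla u_\epsilon|^{2(p(z)-1)+r}\,dz$, which itself, through \eqref{eq:intr-3}--\eqref{eq:intr-4}, calls on the $L^2$-norm of $\operatorname{div}\mathcal{F}_\epsilon$, that is, on $u_{\epsilon,t}$ again. Closing this feedback loop with $\epsilon$-independent constants is the delicate point and is what forces the lower bound on $p^-$. A secondary technical issue is the verification that the classical solution $u_\epsilon$ produced by Leray-Schauder is regular enough for \eqref{eq:intr-3}--\eqref{eq:intr-4} to be applicable, which requires a bootstrap on the H\"older-continuity of $\nabla u_\epsilon$ and the standard Schauder theory for the linearized equation.
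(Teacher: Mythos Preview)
Your plan is essentially the paper's own strategy: classical solutions of the $\epsilon$-regularized problem via Leray--Schauder (using the Ding--Zhang--Zhou gradient H\"older estimate), $\epsilon$-independent a priori estimates closed through the chain \eqref{eq:intr-3}--\eqref{eq:intr-4}, and passage to the limit via monotonicity and Vitali.

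Two points where your sketch departs from the paper. First, the paper multiplies the equation by $-\operatorname{div}\mathcal{F}_\epsilon(z,\nabla u)$ rather than by $u_t$; this puts $\int_{Q_T}(\operatorname{div}\mathcal{F}_\epsilon)^2\,dz$ directly on the left-hand side together with the time derivative of the Lagrangian, so the residual and source terms are absorbed by a \emph{single} application of \eqref{eq:intr-3}--\eqref{eq:intr-4}, and $\|u_t\|_{2,Q_T}$ is read off afterwards from the equation. Your route (test with $u_t$, then square the equation to recover the divergence) also closes, but the bookkeeping of the feedback loop you flag as the ``main obstacle'' becomes more transparent with the paper's test function. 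Second, the paper regularizes not only $f,u_0$ but also the exponents and coefficients $p,q,s,a,\vec c$ by smooth approximants $\textbf{data}_m$; this is what guarantees $u(\cdot,t)\in C^3(\Omega)$ (Lemma~\ref{le:existence-reg}) and hence the applicability of the pointwise inequalities behind \eqref{eq:intr-3}--\eqref{eq:intr-4}. Finally, in the limit $\epsilon\to0$ the paper obtains a.e.\ convergence of $\nabla u^{(\epsilon)}$ not from the Hessian bound but by testing the difference of the $\epsilon$-equation and the limit equation with $u^{(\epsilon)}-u$ and invoking the monotonicity estimate \eqref{eq:R-conv}; this is more direct than trying to extract pointwise gradient convergence from a weighted $W^{1,2}$ bound on the flux.
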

\begin{theorem}
\label{th:reg-degenerate} Let the conditions of Theorem
\ref{th:existence-degenerate} be fulfilled and $u$ be the strong solution of
problem \eqref{eq:main} obtained in Theorem \ref{th:existence-degenerate}.
Then

\[
|\nabla u|^{p(z)-2}\nabla u\in W^{1,2}(Q_T)^N \quad \text{and} \quad \||\nabla u|^{p(z)-2}\nabla u\|_{W^{1,2}(Q_T)}\leq C
\]
with a constant $C$ depending on $N$, the constants in conditions \eqref{eq:p}-\eqref{eq:data}, $\|\nabla u_{0}\|_{p_0(\cdot),\Omega}$ and  $\|f\|_{2,Q_T}$.
\end{theorem}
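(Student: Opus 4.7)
The approach is to establish uniform $W^{1,2}(Q_T)^N$ bounds on the regularized fluxes $\mathcal{F}_\epsilon(z,\nabla u_\epsilon) = (\epsilon^2+|\nabla u_\epsilon|^2)^{(p(z)-2)/2}\nabla u_\epsilon$ along the sequence of classical approximations $u_\epsilon\in C^{2+\gamma,1+\gamma/2}(\overline Q_T)$ supplied by Theorem \ref{th:existence-degenerate}, and then pass to the limit $\epsilon\to 0^+$ by weak compactness. Since the $u_\epsilon$ are classical solutions, the regularized equation holds pointwise and furnishes the identity
\[
\operatorname{div}\mathcal{F}_\epsilon(z,\nabla u_\epsilon) = u_{\epsilon,t} - f^\epsilon - F(z,u_\epsilon,\nabla u_\epsilon).
\]
By Theorem \ref{th:existence-degenerate} together with the growth conditions \eqref{eq:q}--\eqref{eq:s} and the improved integrability \eqref{eq:higher-m}, each term on the right is bounded in $L^2(Q_T)$ uniformly in $\epsilon$, so $\|\operatorname{div}\mathcal{F}_\epsilon\|_{2,Q_T}\leq C$.

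Applying the Hessian-via-divergence inequality \eqref{eq:intr-4} to $v=u_\epsilon(\cdot,t)$ slicewise and integrating in $t$ yields the cornerstone weighted Hessian estimate
\[
\int_{Q_T}(\epsilon^2+|\nabla u_\epsilon|^2)^{p(z)-2}|(u_\epsilon)_{xx}|^2\,dz \leq C_1\|\operatorname{div}\mathcal{F}_\epsilon\|_{2,Q_T}^2+C_2\leq C.
\]
Chain-rule differentiation of $\mathcal{F}_\epsilon$ in $x_k$ produces a principal term $\partial_{\xi_l}\mathcal{F}_\epsilon^i(z,\nabla u_\epsilon)\,D^2_{lk}u_\epsilon$, controlled pointwise by $C(\epsilon^2+|\nabla u_\epsilon|^2)^{p(z)-2}|(u_\epsilon)_{xx}|^2$, plus a logarithmic residual of the form $\tfrac12 p_{x_k}\ln(\epsilon^2+|\nabla u_\epsilon|^2)\,(\epsilon^2+|\nabla u_\epsilon|^2)^{(p(z)-2)/2}D_iu_\epsilon$. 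The Hessian contribution is absorbed by the previous display, while the residual is estimated via $\ln^2 a\leq C_r a^r$ with $r\in(0,4/(N+2))$, bounding it by $(1+|\nabla u_\epsilon|^{2(p-1)+r})$, which is uniformly integrable in view of \eqref{eq:higher-m}. This gives $\|\nabla_x\mathcal{F}_\epsilon\|_{2,Q_T}\leq C$.

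The analogous expansion for $\partial_t\mathcal{F}_\epsilon$ has the same structure with $D^2_{lk}u_\epsilon$ replaced by $D_lu_{\epsilon,t}$ and $p_{x_k}$ by $p_t$: the $p_t$-logarithmic residual is handled verbatim as above, while the principal summand leads to $C(\epsilon^2+|\nabla u_\epsilon|^2)^{p(z)-2}|\nabla u_{\epsilon,t}|^2$, which I would control by differentiating the regularized equation in $t$, testing with $u_{\epsilon,t}$ to obtain a weighted energy identity, and then using a H\"older splitting together with \eqref{eq:higher-m} to upgrade the natural $(p-2)/2$-weight to the required $(p-2)$-weight. Assembling the four uniform bounds for $\mathcal{F}_\epsilon$, $\operatorname{div}\mathcal{F}_\epsilon$, $\nabla_x\mathcal{F}_\epsilon$ and $\partial_t\mathcal{F}_\epsilon$ in $L^2(Q_T)$, and combining weak compactness in $W^{1,2}(Q_T)^N$ with the a.e.\ convergence $\nabla u_\epsilon\to\nabla u$ inherited from Theorem \ref{th:existence-degenerate}, one identifies the weak limit with $|\nabla u|^{p(z)-2}\nabla u$ and transfers the uniform bound.

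The hardest part will be the weighted estimate on $|\nabla u_{\epsilon,t}|^2$: a direct monotonicity argument for the time-differentiated equation only controls $\int_{Q_T}(\epsilon^2+|\nabla u_\epsilon|^2)^{(p(z)-2)/2}|\nabla u_{\epsilon,t}|^2\,dz$, whereas $|\partial_t\mathcal{F}_\epsilon|^2$ carries the heavier weight $(\epsilon^2+|\nabla u_\epsilon|^2)^{p(z)-2}$. Closing this gap is precisely where the classical smoothness of the $u_\epsilon$ is essential: it legitimizes the pointwise manipulation of $u_{\epsilon,tt}$ and $\nabla u_{\epsilon,t}$ and allows the higher integrability \eqref{eq:higher-m} to redistribute the excess weight between the gradient and its time derivative.
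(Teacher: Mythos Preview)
Your spatial argument is essentially the paper's: the uniform bound on $\operatorname{div}\mathcal F_\epsilon$ feeds the Hessian--divergence inequality \eqref{eq:intr-4}, the chain rule for $D_{x_k}\mathcal F_\epsilon^{(i)}$ is dominated by the weighted Hessian plus a logarithmic residual absorbed via \eqref{eq:higher-m}, and the limit is identified through a.e.\ convergence of gradients and weak $L^2$ compactness. Two minor corrections: the objects in $C^{2+\gamma,1+\gamma/2}(\overline Q_T)$ are not the $u^{(\epsilon)}$ of Theorem~\ref{th:existence-degenerate} but the doubly regularized $u_m$ (smooth data \emph{and} exponents) of Lemma~\ref{le:existence-reg}; the paper therefore passes to the limit in two steps, first $m\to\infty$ at fixed $\epsilon$ (Theorem~\ref{th:reg-eps}), then $\epsilon\to 0$. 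This is only a packaging difference.

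The substantive discrepancy is the time derivative. The paper's proof of Theorem~\ref{th:reg-degenerate} (and of the preparatory Theorem~\ref{th:reg-eps}) treats only $D_{x_j}\bigl(|\nabla u|^{p-2}D_{x_i}u\bigr)$ for $i,j=1,\dots,N$; the inclusion $\mathcal F\in W^{1,2}(Q_T)^N$ is established in the sense of spatial weak derivatives, consistent with the heat-equation analogy $u\in L^2(0,T;W^{2,2}_0(\Omega))$ invoked in the introduction. You are reading $W^{1,2}(Q_T)$ as the full space-time Sobolev space and then trying to bound $\partial_t\mathcal F_\epsilon$ in $L^2$. That step is not in the paper, and your sketch for it does not close: testing the $t$-differentiated equation with $u_{\epsilon,t}$ yields at best $\int_{Q_T}(\epsilon^2+|\nabla u_\epsilon|^2)^{(p-2)/2}|\nabla u_{\epsilon,t}|^2\,dz$, whereas $|\partial_t\mathcal F_\epsilon|^2$ carries the weight $(\epsilon^2+|\nabla u_\epsilon|^2)^{p-2}$. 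For $p(z)>2$ the missing factor $(\epsilon^2+|\nabla u_\epsilon|^2)^{(p-2)/2}$ cannot be ``redistributed'' by \eqref{eq:higher-m}, because higher integrability is available for $\nabla u_\epsilon$ only, not for $\nabla u_{\epsilon,t}$; any H\"older splitting would require an $L^q$ bound on $\nabla u_{\epsilon,t}$ with $q>2$ that you do not have. Drop the $\partial_t\mathcal F_\epsilon$ part and your argument matches the paper.
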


\begin{theorem}
\label{th:convex-domain} The assertions of Theorems
\ref{th:existence-degenerate}, \ref{th:reg-degenerate} remain true  for the
$C^2$-domains and convex domains $\Omega$ without any assumption on the
regularity of $\partial\Omega$.
\end{theorem}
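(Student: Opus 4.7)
The plan is to approximate $\Omega$ by an increasing sequence of $C^{2+\alpha}$-smooth domains $\Omega_m\Subset \Omega_{m+1}\Subset \Omega$ with $\bigcup_m \Omega_m=\Omega$, solve problem \eqref{eq:main} on each cylinder $Q_T^{(m)}=\Omega_m\times(0,T)$ via Theorems \ref{th:existence-degenerate} and \ref{th:reg-degenerate}, and pass to the limit $m\to\infty$. The only new ingredient is the uniformity in $m$ of the constants in \eqref{eq:unif-est-degenerate}, \eqref{eq:higher-m}, and in the bound of Theorem \ref{th:reg-degenerate}; everything else reproduces arguments already used in the paper.

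First, I would construct the approximations. For a convex $\Omega$, I would produce smooth strictly convex $\Omega_m$ by mollifying a convex defining function of $\Omega$ and taking suitable sub-level sets. For a $C^2$-domain, I would straighten $\partial\Omega$ locally, mollify the defining function, and glue with a partition of unity to obtain $\partial\Omega_m\in C^{2+\alpha}$ whose $C^2$-norms are bounded uniformly in $m$ and such that $\partial\Omega_m\to\partial\Omega$ in $C^2$. I would transfer the data by setting $f_m=f|_{\Omega_m}$ and constructing $u_{0,m}\in W^{1,p_0(\cdot)}_0(\Omega_m)$ with $\|\nabla u_{0,m}\|_{p_0(\cdot),\Omega_m}\to \|\nabla u_0\|_{p_0(\cdot),\Omega}$, achieved by mollifying $u_0$ (extended by zero outside $\Omega$) and cutting off inside $\Omega_m$, the log-H\"older regularity inherited from $p\in C^{0,1}$ ensuring convergence of the modulars. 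Theorems \ref{th:existence-degenerate} and \ref{th:reg-degenerate} then furnish strong solutions $u_m$ on $Q_T^{(m)}$, with constants depending a priori on $\partial\Omega_m$.

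Second, I would prove that these constants are bounded in $m$. In the derivations of the key inequalities \eqref{eq:intr-3} and \eqref{eq:intr-4}, the dependence on $\partial\Omega_m$ enters solely through boundary integrals produced by integration by parts that carry the second fundamental form of $\partial\Omega_m$. In the convex case this form is pointwise nonnegative, so the corresponding boundary terms have the favorable sign and may simply be discarded. In the $C^2$ case they are controlled by the uniform $C^2$-norm of $\partial\Omega_m$ built into the construction. Consequently, \eqref{eq:unif-est-degenerate}, \eqref{eq:higher-m}, and the bound of Theorem \ref{th:reg-degenerate} hold for $u_m$ with constants independent of $m$.

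Third, I would pass to the limit. Extending $u_m$ by zero to $Q_T$, the uniform bounds yield weak and weak-$*$ limits $u\in \mathbf{W}_{p(\cdot)}(Q_T)$ with $u_t\in L^2(Q_T)$ and $|\nabla u|^{p(z)-2}\nabla u\in W^{1,2}(Q_T)^N$, together with \eqref{eq:higher-m}. Aubin--Lions compactness based on $u_{m,t}\in L^2(Q_T)$ gives $u_m\to u$ strongly in $L^2(Q_T)$. To identify the nonlinear weak limits $|\nabla u_m|^{p(z)-2}\nabla u_m$ and $F(z,u_m,\nabla u_m)$, I would combine the Minty--Browder monotonicity trick with the equi-integrability furnished by \eqref{eq:higher-m} via Vitali's theorem; the initial trace is recovered from the $C^0([0,T];L^2(\Omega))$ continuity inherited from $u_t\in L^2(Q_T)$ and the convergence of $u_{0,m}$. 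The main obstacle is the second step: a careful inspection of the derivations of \eqref{eq:intr-3}--\eqref{eq:intr-4} is needed to isolate the boundary contributions tied to $\partial\Omega_m$ and to verify that, in the convex case, they indeed have the good sign. This is the variable-exponent analogue of the classical Grisvard-type argument for the heat equation on convex domains and is where the essential work of the proof lies.
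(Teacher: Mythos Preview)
Your overall plan coincides with the paper's: approximate $\Omega$ from the interior by $C^{2+\alpha}$ (and, in the convex case, convex) subdomains $\Omega_k$, apply Theorems~\ref{th:existence-degenerate}--\ref{th:reg-degenerate} on each $Q_{T,k}$, exploit the sign of the second fundamental form (convex case) or the uniform $C^2$ bound on $\partial\Omega_k$ to make the constants independent of $k$, extend by zero, and pass to the limit.

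Your emphasis, however, is inverted. The boundary-sign observation you single out as ``where the essential work lies'' is already recorded in the paper as Corollary~\ref{cor:convex-1} and the final sentence of Lemma~\ref{le:principal-improved}; nothing new is needed there. The genuinely new content of Section~\ref{sec:convex-domain} is the limit passage on \emph{varying} domains, which you dismiss as a reproduction of earlier arguments. It is not: the limit $w$ does not vanish on $\partial\Omega_k$, so one cannot test the equation for $u_k$ against $u_k-w$ as was done in Section~\ref{sec:proofs}. The paper resolves this via Lemma~\ref{le:conv-grad}, a localized Cauchy argument in which one tests with $(w_m-w_k)\psi$ for a cutoff $\psi$ supported well inside $\Omega$, obtains a.e.\ convergence of $\nabla w_k$ on each compact subset, and then runs a diagonal extraction. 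Your Minty--Browder alternative is also viable---testing the equation for $u_k$ with $u_k$ itself and the limit equation with $w$ yields $\lim_k\int_{Q_T}|\nabla w_k|^{p}=\int_{Q_T}\eta\cdot\nabla w$, after which the standard monotonicity conclusion and \eqref{eq:R-conv} give strong convergence of $\nabla w_k$---but you should flag that this step requires care rather than presenting it as routine.
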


\subsection{Organization of the work}
Section \ref{sec:regularization} is devoted to studying the problem with the regularized nondegenerate flux $\mathcal{F}_{\epsilon}(z,\nabla u)$ and smooth data in a smooth domain $\Omega$. The exponents of nonlinearity, the coefficients, and the data $f$ and $u_0$ are approximated by sequences of smooth functions converging in the corresponding norms.
A solution to the regularized problem
is sought as the fixed point of the mapping generated by the linearized
problem.  The bulk of the section consists in checking the fulfillment of the
conditions of the Leray-Schauder principle. In the proof, we rely on the recent
results on boundedness and H\"older continuity of the gradient for the weak
solutions to the corresponding nonlinear problem \cite{Ding-Zhang-Zhou-2020}.
The constructed solution belongs to $C^{2+\beta,1+\frac{\beta}{2}}(\overline
Q_T)$ with some $\beta\in (0,1)$, its norm depends on $\epsilon$.

In Section \ref{sec:flux} we derive the integral inequality which allows one to estimate $\|(\epsilon^2+|\nabla u|^2)^{\frac{p-2}{2}}|u_{xx}|\|_{2,\Omega}$ through $\|\operatorname{div}\mathcal{F}_{\epsilon}(z,\nabla u)\|_{2,\Omega}$ and the data. We use first the known integral identity, which holds for smooth functions and domains:

\begin{equation}
\label{eq:integral-main}
\begin{split}
\int_{\Omega} (\operatorname{div}\mathcal{F}_\epsilon(z,\nabla u))^2\,dx  = & - \int_{\partial\Omega}\mathcal{K}(\mathcal{F}_\epsilon(z,\nabla u),\vec \nu)^2
\,dS
\\
&
+\sum_{i,j=1}^N\int_{\Omega}D_{i}\left(\mathcal{F}^{(j)}_\epsilon(z,\nabla u)\right) D_{j}\left(\mathcal{F}^{(i)}_\epsilon(z,\nabla u)\right)\,dx,
\end{split}
\end{equation}
where $\mathcal{K}$ is the second fundamental form of the surface
$\partial\Omega$,   and $\vec \nu$ is the unit exterior normal to
$\partial\Omega$. By straightforward computations, the integrand of the second
integral is represented as the sum of $(\epsilon^2+|\nabla
u|^2)^{p-2}|u_{xx}|^2$ and several residual terms which
appear only if $p$ depends on $x$. After estimating these terms and the
boundary integral, we arrive at \eqref{eq:intr-4}. This inequality together
with another known inequality \eqref{eq:intr-3} imply higher integrability of
the gradient \eqref{eq:intr-5}.

In Section \ref{sec:estimates} uniform a priori estimates for the classical
solutions of the regularized problems are derived. The assumptions on the
growth of $F$ allow one to estimate the nonlinear sources through the elliptic
part of the equation and the lower-order norms of the solutions. In our
assumptions, all terms of the regularized equation are uniformly bounded in
$L^2(Q_T)$.

In Section \ref{sec:proofs} we prove Theorems \ref{th:existence-degenerate},
\ref{th:reg-degenerate}. We prove first the analogs of these theorems for
problem \eqref{eq:main} with the regularized flux $\mathcal{F}_\epsilon$. The
uniform estimates allow us to choose a sequence of solutions to the
regularized problems with smooth data such that the sequence itself, the
derivatives of its terms, and the corresponding nonlinear terms are weakly
convergent. The limits of the nonlinear terms are identified by the
monotonicity of the fluxes and the pointwise convergence of the gradients
together with the higher integrability property \eqref{eq:intr-5}. The
assertions of Theorems \ref{th:existence-degenerate}, \ref{th:reg-degenerate}
follow after passing to the limit $\epsilon\to 0^+$ in the family of solutions
to the regularized problems.

The proof of Theorem \ref{th:convex-domain} is given in Section
\ref{sec:convex-domain}.  For a convex domain, this is done by means of
approximation from the interior by a sequence of smooth convex domains,
application of Theorems \ref{th:existence-degenerate}, \ref{th:reg-degenerate}
in each of these domains, and the choice of a sequence of solutions with
proper convergence properties. For the smooth convex domains, the function
$\mathcal{K}$ in \eqref{eq:integral-main} is non-positive. In this case, the
first term on the right-hand side of \eqref{eq:integral-main} can be omitted,
and the key estimates on the solutions to the problems in the regularized
domains become independent of $\partial\Omega$. In the case of a $C^2$-domain
$\Omega$, the conclusion follows by approximation of $\Omega$ from the
interior by a family of smooth domains whose boundaries are uniformly bounded
in $C^2$.

\section{Regularized problem}
\label{sec:regularization} A solution of the degenerate problem
\eqref{eq:main} is obtained as the limit of the family of solutions to the
following problems with the regularized flux and sources
\begin{equation}
\label{eq:main-reg}
\begin{split}
& u_t-\operatorname{div}\left((\epsilon^2+|\nabla
u|^{2})^{\frac{p(z)-2}{2}}\nabla u\right)= F_\epsilon (z,u,\nabla
u)+f\quad \text{in $Q_T$},
\\
& \text{$u=0$ on $\partial\Omega\times (0,T)$},
\\
& \text{$u(x,0)=u_0(x)$ in $\Omega$},\qquad \epsilon\in (0,1),
\end{split}
\end{equation}
where
\[
F_\epsilon(z,u,\nabla u)=a(\epsilon^2+u^2)^{\frac{q-2}{2}}u + (\epsilon^2+|\nabla u|^2)^{\frac{s-2}{2}}(\vec c,\nabla u),\qquad (z,u,\nabla u)\in Q_T\times \mathbb{R}\times \mathbb{R}^N.
\]
Throughout the text until Section \ref{sec:convex-domain} we
assume that $\partial\Omega\in C^{2+\alpha}$ with some $\alpha\in (0,1)$.

\subsection{Properties of the regularized flux}

Fix $\epsilon>0$ and accept the notation

\begin{equation}
\label{eq:reg-flux}
\mathcal{G}(p,\vec \xi)=
\left(\epsilon^2+|\vec \xi|^2\right)^{\frac{p-2}{2}},\qquad
\mathcal{F}_\epsilon(z,\nabla u)=\mathcal{G}(p(z),\nabla u)\nabla u.
\end{equation}
The function $\mathcal{F}_\epsilon$ with $\epsilon>0$ is the
regularized flux. It is straightforward to check that $\mathcal{F}_\epsilon$
with $\epsilon>0$ possesses the following properties.

\begin{enumerate}
\item \textsl{Monotonicity.} There is a constant $C=C(p^\pm)$ such that for every $\xi,\eta\in \mathbb{R}^N$, $z\in Q_T$

\begin{equation}
\label{eq:mon-reg}
(\mathcal{F}_{\epsilon}(z,\xi)-\mathcal{F}_{\epsilon}(z,\eta),\xi-\eta)\geq C \begin{cases}
|\xi-\eta|^{p(z)} & \text{if $p(z)\geq 2$},
\\
(1+|\xi|^2+|\eta|^2)^{\frac{p(z)-2}{2}}|\xi-\eta|^2 & \text{if $p(z)\in (1,2)$}.
\end{cases}
\end{equation}
For the proof, we refer to \cite[Proposition 3.1]{Arora-Shm-RACSAM-2023}.

\item \textsl{Coercivity.} For every $z\in Q_T$ and $\xi\in \mathbb{R}^N$

\begin{equation}
\label{eq:coercive}
\mathcal{F}_\epsilon(z,\xi)\cdot \xi \geq \begin{cases}
|\xi|^{p(z)} & \text{if $p(z)\geq 2$},
\\
C |\xi|^{p(z)} - 2\epsilon^{p(z)} & \text{if $p(z)\in (1,2)$}.
\end{cases}
\end{equation}

\begin{proof}
If $p\geq 2$, inequality \eqref{eq:coercive} is obvious. Let $p\in (1,2)$. By Young's inequality, for every $\delta\in (0,1)$

\begin{equation}
\label{eq:mon-flux-aux}
\begin{split}
|\xi|^p & = \left(\delta^{\frac{p-2}{p}}|\xi|^2(\epsilon^2 +|\xi|^2)^{\frac{p-2}{2}}\right)^{\frac{p}{2}} \left(\delta (\epsilon^2+|\xi|^2)^{\frac{p}{2}}\right)^{\frac{2-p}{2}}
\\
&
\leq \delta(\epsilon^2+|\xi|^2)^{\frac{p}{2}}+ \delta^{1-\frac{2}{p}} (\epsilon^2+|\xi|^2)^{\frac{p-2}{2}}|\xi|^2.
\end{split}
\end{equation}
If $\epsilon^2\leq |\xi|^2$, then $(\epsilon^2+|\xi|^2)^{\frac{p}{2}}\leq 2^{\frac{p}{2}}|\xi|^p\leq 2|\xi|^p$. The second line of \eqref{eq:coercive} with $C=2\cdot 4^{\frac{2}{p}-1}$ follows upon choosing $\delta=1/4$. If $|\xi|^2< \epsilon^2$, we may choose $\delta=1$:

\[
|\xi|^p\leq 2\epsilon^p + (\epsilon^2+|\xi|^2)^{\frac{p-2}{2}}|\xi|^2.
\]
\end{proof}

\item \textsl{Growth.} There exist constants $C_1$, $C_2$ such that for every $z\in Q_T$ and $\xi \in \mathbb{R}^N$

    \begin{equation}
    \label{eq:growth}
    |\mathcal{F}_{\epsilon}(z,\xi)|\leq C_1|\xi|^{p(z)-1}+C_2.
    \end{equation}
    Inequality \eqref{eq:growth} follows from

    \[
    \begin{split}
    |\mathcal{F}_\epsilon (z,\xi)| & \leq (\epsilon^2+|\xi|^2)^{\frac{p(z)-2}{2}}|\xi|\leq (\epsilon^2+|\xi|^2)^{\frac{p(z)-1}{2}}
    \leq 2^{\frac{p^+-1}{2}} \begin{cases}
    \epsilon^{p(z)-1} & \text{if $|\xi|\leq \epsilon$},
    \\
    |\xi|^{p(z)-1} & \text{if $|\xi|\geq \epsilon$}.
    \end{cases}
    \end{split}
    \]

    \item \textsl{H\"older-continuity.} Let $p\in C^{\beta,\frac{\beta}{2}}(\overline{Q}_T)$, $\beta\in (0,1)$. For every $z=(x,t), s=(y,\tau)$, $z,s\in Q_T$, and every $\xi\in \mathbb{R}^N$

        \begin{equation}
        \label{eq:flux-cont}
        |\mathcal{F}_\epsilon(z,\xi)-\mathcal{F}_{\epsilon}(s,\xi)|\leq C \left(|x-y|^\beta+|t-\tau|^{\frac{\beta}{2}}\right)\left(1+\ln (1+|\xi|^2)\right)\left(1+|\xi|^{r}\right)
        \end{equation}
        with $r=\max\{p(z),p(s)\}$ and an independent of $\xi$ constant $C$.

        \begin{proof}
        Denote by $\mathcal{F}_\epsilon^{(i)}$ the $i$th component of the flux. By the mean value theorem

        \[
        \begin{split}
        \mathcal{F}^{(i)}_\epsilon(z,\xi)-\mathcal{F}^{(i)}_{\epsilon}(s,\xi) & = \xi_i\int_{0}^1\dfrac{d}{d\theta}(\epsilon^2+|\xi|^2)^{\frac{1}{2}(\theta p(z)+(1-\theta)p(s)-2)}\,d\theta
        \\
        & = \xi_i\int_0^1 (\epsilon^2+|\xi|^2)^{\frac{1}{2}(\theta p(z)+(1-\theta)p(s)-2)}\ln (\epsilon^2+|\xi|^2) \,d\theta (p(z)-p(s)),
        \end{split}
        \]
        whence

        \[
        |\mathcal{F}^{(i)}_\epsilon(z,\xi)-\mathcal{F}^{(i)}_{\epsilon}(s,\xi)|\leq |p(z)-p(s)|\int_0^1 (\epsilon^2+|\xi|^2)^{\frac{1}{2}(\theta p(z)+(1-\theta)p(s)-1)}|\ln (\epsilon^2+|\xi|^2)|\,d\theta.
        \]
        For every constant $\mu\in (0,p^--1)$

        \[
        |\ln (\epsilon^2+|\xi|^2)|\leq C_\mu\begin{cases}
        \ln (1+|\xi|^2) & \text{if $\epsilon^2+|\xi|^2\geq 1$},
        \\
        (\epsilon^2+|\xi|^2)^{-\mu} & \text{if $\epsilon^2+|\xi|^2< 1$}.
        \end{cases}
        \]
        Assume, for the sake of definiteness, that $p(z)>p(s)$. If $\epsilon^2+|\xi|^2\geq 1$, then

        \[
        \begin{split}
        |\mathcal{F}_\epsilon(z,\xi)-\mathcal{F}_{\epsilon}(s,\xi)| & \leq C|p(z)-p(s)|(\epsilon^2+|\xi|^2)^{\frac{p(z)-1}{2}}\left(1+\ln(1+|\xi|^2)\right)
        \\
        & \leq C'|p(z)-p(s)|(1+|\xi|^{p(z)-1})\left(1+\ln(1+|\xi|^2)\right).
        \end{split}
        \]
        Otherwise, $\epsilon^2+|\xi|^2<1$

        \[
        \begin{split}
        |\mathcal{F}_\epsilon(z,\xi)-\mathcal{F}_{\epsilon}(s,\xi)| & \leq C_{\mu}|p(z)-p(s)|(\epsilon^2+|\xi|^2)^{\frac{p(s)-1}{2}-\frac{\mu}{2}}
        \\
        & \leq C'_\mu |p(z)-p(s)|\left(1+|\xi|^{p(z)-1}\right) \left(1+\ln(1+|\xi|^2)\right).
        \end{split}
        \]
        Here we have used the inequality
        \[
        \begin{split}
        (\epsilon^2+|\xi|^2)^{\frac{p(s)-1-\mu}{2}} & \leq 2^{\frac{p(s)-1-\mu}{2}}\begin{cases}
        \epsilon^{p(s)-1-\mu} & \text{if $\epsilon\geq |\xi|$},
        \\
        |\xi|^{p(s)-1-\mu} & \text{if $\epsilon< |\xi|$}
        \end{cases}
        \\
        &
        \leq 2^{\frac{p^+-1-\mu}{2}} (\epsilon^{p(s)-1-\mu}+|\xi|^{p(s)-1-\mu})
        \end{split}
        \]
        and Young's inequality.
        \end{proof}
\end{enumerate}

\subsection{Classical solutions of the regularized problem}

Let us approximate the data of problem \eqref{eq:main-reg}:
\begin{equation}
\label{eq:data-reg}
\begin{split}
& u_{0m}\in C_0^{\infty}(\Omega),\qquad \text{$u_{0m}\to u_0$ in
$W^{1,p_0(\cdot)}_0(\Omega)$},
\\
& f_m\in C_0^{\infty}(Q_T),\qquad \text{$f_m\to f$ in $L^2(Q_T)$}.
\end{split}
\end{equation}
We refer to \cite[Sec.4]{Die-Nag-Ruz-2012} for the approximation of $u_0$ in
the variable Sobolev space $W^{1,p_0(\cdot)}_0(\Omega)$, the approximation of
$f$ is standard. The exponents and coefficients are approximated as follows:

\begin{equation}
\label{eq:data-reg-0}
\begin{split}
& p_k\in C^\infty (\overline Q_T), \quad \text{$p_k\nearrow p$ in $C^{0,1}(\overline Q_T)$},
\\
& s_k,q_k\in C^{\infty}(\overline Q_T), \quad \text{$q_k\to q$, $s_k\to s$ in $C^0(\overline Q_T)$},
\\
& a_k,c_{ik}\in C^{\infty}(\overline Q_T), \quad \text{$a_k\to a$, $c_{ik}\to c_i$ in $C^0(\overline Q_T)$}.
\end{split}
\end{equation}
By the McShane-Whitney extension theorem \cite{McShane-1934}
every continuous or Lipschitz-con\-ti\-nu\-ous function can be extended to the
whole space in such a way that the extension preserves the modulus of
continuity. The approximations for $q$, $s$, $a$, $\vec c$ are obtained then
by mollification of the extended functions. To obtain a monotone increasing
sequence $\{p_m\}$ we extend $p$ from $Q_T$ to $\mathbb{R}^{N+1}$ by a
function $P$ with the same Lipschitz constant $L$, take the monotone
increasing sequence $P_m=P-2^{-m}$ and set $p_m=P_m\star \phi_{2^{-2m}}$,
where $\phi$ is the standard mollifier. For all $m$ from some $m_0$ on, the
approximations satisfy the structural conditions \eqref{eq:q}-\eqref{eq:data}
with the same constants except for $2\mu$, which is substituted by $\mu$. From
now on, we use the abbreviate notation for the sequences of approximations
satisfying \eqref{eq:data-reg-0}:
\[
    \textbf{data}_m \equiv  
    (p_m, q_m,  s_m, a_m, \vec c_{m}).
\]
For the chosen $\textbf{data}_m$

\[
\begin{split}
& \|\nabla u_{0m}\|_{p_m(\cdot,0),\Omega}\leq C\|\nabla u_{0m}\|_{p_0(\cdot),\Omega}\leq C'\|\nabla u_0\|_{p_0(\cdot),\Omega},
\\
& \|u_{0n}-u_{0m}\|_{2,\Omega}\leq C\|\nabla (u_{0n}-u_{0m})\|_{p^-,\Omega}\leq C'\|u_{0n}-u_{0m}\|_{W^{1,p_0(\cdot)}_0(\Omega)}\to 0\quad \text{as $m,n\to \infty$}.
\end{split}
\]
Consider problem \eqref{eq:main-reg} with the data $f_m$, $u_{0m}$ and $\textbf{data}_m$,
\begin{equation}
\label{eq:main-reg-1}
\begin{split}
& u_t-\operatorname{div}\left((\epsilon^2+|\nabla
u|^{2})^{\frac{p_m(z)-2}{2}}\nabla u\right)= F_{\epsilon m} (z,u,\nabla u)+f_m\quad \text{in $Q_T$},
\\
& \text{$u=0$ on $\partial\Omega\times (0,T)$},
\\
& \text{$u(x,0)=u_{0m}(x)$ in $\Omega$},\qquad \epsilon\in (0,1),
\end{split}
\end{equation}
and the auxiliary linear problem
\begin{equation}
\label{eq:aux-1}
\begin{split}
&  u_t- (\epsilon^2+|\nabla v|^2)^{\frac{p_m-2}{2}}\sum_{i,j=1}^N
a_{ij m}(z,\nabla v) D^2_{ij}u
\\
& \qquad \qquad
= h_{\epsilon m}(z,v)u + \vec g_{\epsilon m}(z,\nabla
v)\nabla u +\tau f_m\quad \text{in $Q_T$},
\\
& \text{$u=0$ on $\partial\Omega\times (0,T)$},
\\
& \text{$u(x,0)=\tau u_{0m}(x)$ in $\Omega$},\qquad \tau\in [0,1],
\end{split}
\end{equation}
where $v\in C^{1+\alpha,\frac{1+\alpha}{2}}(\overline{Q}_T)$ is a given
function and

\[
\begin{split}
F_{\epsilon m}(z,v,\nabla v) & = h_{\epsilon m}(z,v) + (\epsilon^2+|\nabla v|^2)^{\frac{s_m(z)-2}{2}} (\vec c_m,\nabla v),
\\
a_{ijm}(z,\nabla v) & =\delta_{ij}+(p_m(z)-2)\dfrac{D_ivD_jv}{\epsilon^2+|\nabla
v|^2},
\\
h_{\epsilon m}(z,v) & = a_m(z)(\epsilon^2+|v|^{2})^{\frac{q_m(z)-2}{2}},
\\
\vec g_{\epsilon m}(z,\nabla v) & =(\epsilon^2+|\nabla
v|^2)^{\frac{p_m(z)-2}{2}}\ln(\epsilon^2+|\nabla v|^2)  \nabla p_m
+ (\epsilon^2+|\nabla v|^2)^{\frac{s_m(z)-2}{2}}\vec  c_m(z).
\end{split}
\]
If $v=u$, problem \eqref{eq:aux-1} is formally equivalent to problem \eqref{eq:main-reg-1}. For every $\epsilon>0$ and $|\nabla v|\leq M$ equation \eqref{eq:aux-1} is
uniformly parabolic because

\[
\min\{p_m-1,1\}|\vec \xi|^2 \leq \sum_{i,j=1}^Na_{ijm}\xi_i\xi_j\leq
\max\{p_m-1,1\}|\vec \xi|^2 \qquad \forall \vec\xi\in \mathbb{R}^N.
\]

\begin{proposition} \label{pro:holder-coefficients} Let conditions
\eqref{eq:p}-\eqref{eq:data} be fulfilled. For every $v\in
C^{1+\alpha,\frac{1+\alpha}{2}}(\overline Q_T)$ the coefficients of equation
\eqref{eq:aux-1} belong to $C^{\alpha,\frac{\alpha}{2}}(\overline{Q}_T)$.
\end{proposition}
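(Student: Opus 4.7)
The plan is to verify that each coefficient in equation \eqref{eq:aux-1}, namely the leading matrix $(\epsilon^2+|\nabla v|^2)^{(p_m-2)/2} a_{ijm}(z,\nabla v)$, the zero-order term $h_{\epsilon m}(z,v)$, and the first-order coefficient $\vec g_{\epsilon m}(z,\nabla v)$, can be assembled from building blocks that are already known to belong to $C^{\alpha,\alpha/2}(\overline{Q}_T)$, using only the algebra and smooth-composition properties of Hölder classes. The key observation is that the regularization parameter $\epsilon>0$ keeps the quantities $\epsilon^2+|v|^2$ and $\epsilon^2+|\nabla v|^2$ uniformly bounded away from zero on $\overline Q_T$, so the nonlinear operations (powers, logarithms, divisions) act on a compact subset of $(0,\infty)$ where they are smooth.

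The first step is to collect the basic Hölder ingredients. From $v\in C^{1+\alpha,\frac{1+\alpha}{2}}(\overline Q_T)$ and property (1) of Subsection \ref{subsec:Holder-spaces}, both $v$ and the components $D_i v$ lie in $C^{\alpha,\alpha/2}(\overline Q_T)$. By \eqref{eq:data-reg-0}, the approximate data $p_m,q_m,s_m,a_m,c_{im}$ are $C^\infty(\overline Q_T)$, hence in $C^{\alpha,\alpha/2}(\overline Q_T)$, and the same holds for $\nabla p_m$. Since $\overline Q_T$ is compact, $v$ and $\nabla v$ are bounded; combined with $\epsilon>0$, this gives two-sided bounds $0<\epsilon^2\leq\epsilon^2+|v|^2\leq M$ and $0<\epsilon^2\leq\epsilon^2+|\nabla v|^2\leq M$ for some $M=M(\epsilon,v)$. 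Moreover, property (4) of Subsection \ref{subsec:Holder-spaces} shows that $|\nabla v|^2=\sum(D_iv)^2$ and hence $\epsilon^2+|\nabla v|^2$ and $\epsilon^2+|v|^2$ are themselves elements of $C^{\alpha,\alpha/2}(\overline Q_T)$.

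The second step is to invoke the standard closure property: if $w\in C^{\alpha,\alpha/2}(\overline Q_T)$ takes values in a compact set $K\subset\mathbb{R}$ and $\Phi\in C^1(K)$, then $\Phi\circ w\in C^{\alpha,\alpha/2}(\overline Q_T)$, with Hölder seminorm bounded by $\|\Phi'\|_{L^\infty(K)}[w]_{\alpha,\alpha/2,Q_T}$. Applying this with $\Phi(t)=1/t$ shows $(\epsilon^2+|\nabla v|^2)^{-1}\in C^{\alpha,\alpha/2}$; together with $p_m-2,\ D_iv D_jv\in C^{\alpha,\alpha/2}$ and property (4), the matrix entries $a_{ijm}(z,\nabla v)$ lie in $C^{\alpha,\alpha/2}(\overline Q_T)$. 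Applying $\Phi=\ln$ shows $\ln(\epsilon^2+|\nabla v|^2),\ \ln(\epsilon^2+|v|^2)\in C^{\alpha,\alpha/2}$. Writing the variable-exponent powers as
\[
(\epsilon^2+|\nabla v|^2)^{\frac{p_m(z)-2}{2}}=\exp\!\left(\tfrac{p_m-2}{2}\,\ln(\epsilon^2+|\nabla v|^2)\right),
\]
and similarly for the exponents $s_m$ and $q_m$, we see each such factor is $\exp$ composed with a product of two Hölder functions with bounded range, hence is itself in $C^{\alpha,\alpha/2}(\overline Q_T)$.

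The third step is assembly. The leading coefficient $(\epsilon^2+|\nabla v|^2)^{(p_m-2)/2} a_{ijm}$ is a product of two Hölder functions, so it lies in $C^{\alpha,\alpha/2}$. The zero-order coefficient $h_{\epsilon m}(z,v)=a_m(z)(\epsilon^2+v^2)^{(q_m-2)/2}$ is the product of a smooth function and a Hölder composition, hence in $C^{\alpha,\alpha/2}$. Finally, $\vec g_{\epsilon m}$ is a sum of two terms, each a product of the smooth data $\nabla p_m$ or $\vec c_m$ with functions of the form $(\epsilon^2+|\nabla v|^2)^{(p_m-2)/2}\ln(\epsilon^2+|\nabla v|^2)$ or $(\epsilon^2+|\nabla v|^2)^{(s_m-2)/2}$, which have just been shown to be Hölder. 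There is no real obstacle; the only subtlety is the $z$-dependence of the exponents $p_m, q_m, s_m$, which is handled uniformly by the $\exp\text{-}\ln$ representation and the fact that $\epsilon>0$ keeps the logarithm's argument bounded away from zero.
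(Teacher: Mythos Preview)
Your proof is correct. It differs from the paper's in execution rather than in idea: the paper treats the key building block $\mathcal{G}(p_m,\nabla v)=(\epsilon^2+|\nabla v|^2)^{(p_m-2)/2}$ directly, writing the difference $\mathcal{G}(p_m(z_2),\nabla v(z_2))-\mathcal{G}(p_m(z_1),\nabla v(z_1))$ as an integral of $\frac{d}{d\theta}\mathcal{G}$ along the segment joining $(p_m(z_1),\nabla v(z_1))$ to $(p_m(z_2),\nabla v(z_2))$, and then bounds the integrand by a constant depending on $\epsilon$, $p_m^\pm$ and $|D_iv|_{0,Q_T}$. You instead factor the variable-exponent power through the $\exp$--$\ln$ representation and invoke the abstract closure properties of $C^{\alpha,\alpha/2}$ under products and smooth compositions. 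The paper's route gives the H\"older seminorm explicitly in terms of $[p_m]_{\alpha,\alpha/2}$ and $[D_iv]_{\alpha,\alpha/2}$, which is convenient if one needs to track constants; your route is cleaner and more modular, and makes transparent that the only obstruction would be $\epsilon=0$. Both are standard and fully rigorous.
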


\begin{proof}
Let $\mathcal{G}(p_m,s)$ be the function defined in \eqref{eq:reg-flux}. The
needed inclusions will follow is we show that $\mathcal{G}(q_m,v)$ and $\mathcal{G}(s_m,|\nabla v|)$ are H\"older-continuous as functions of $z$. For the sake
of definiteness, we consider the second case. Given two points
$z_1,z_2$, we denote $p_m(z_i)=p_{im}$, $\nabla v(z_i)=\vec \xi_i$. By the Lagrange mean value theorem

\[
\begin{split}
& \mathcal{G}(p_{2m},
\xi_2)
-
\mathcal{G}(p_{1m},\xi_1)
=
\int_0^1\dfrac{d}{d\theta}\mathcal{G}(\theta
p_{2m}+(1-\theta)p_{1m},\theta
\xi_2+(1-\theta)
\xi_1)\,d\theta
\\
&
=\frac{1}{2}(p_{2m}-p_{1m})\int_0^1\mathcal{G}(\theta
p_{2m}+(1-\theta)p_{1m},\theta
\xi_2+(1-\theta)\xi_1)\ln
(\epsilon^2+|\theta
\xi_2+(1-\theta)
\xi_1|^2)\,d\theta
\\
&
\quad
+2\sum_{i=1}^N
(\xi_{i2}-\xi_{i1})\int_0^1\dfrac{\mathcal{G}(\theta
p_{2m}+(1-\theta)p_{1m},\theta
\xi_2+(1-\theta)
\xi_1)}{\epsilon^2+|\theta
\xi_2+(1-\theta)
\xi_1|^2}\,d\theta.
\end{split}
\]
The integrands are bounded by constants depending on $\epsilon$, $ p^\pm_m$ and $|D_iv|_{0,Q_T}$, therefore

\[
\dfrac{\left|\mathcal{G}(p_m (z_2),\nabla v(z_2)) - \mathcal{G}(p_m (z_1),\nabla
v(z_1))\right|} {\rho^\alpha(z_1,z_2)}\leq
C\left([p_m ]_{\alpha,\frac{\alpha}{2}}+\sum_{i=1}^{N}[D_iv]_{\alpha,\frac{\alpha}{2}}\right).
\]
It follows that $\|\mathcal{G}(p_m ,\nabla v)\|_{\alpha,\frac{\alpha}{2},Q_T}$ is
bounded. The inclusions

\[
a_{ijm}(z,\nabla v), \,g_{\epsilon m i}(z,\nabla v), \,h_{\epsilon m}(z,v)\in
C^{\alpha,\frac{\alpha}{2}}(\overline{Q}_T)
\]
follow in the same way.
\end{proof}

Since the functions $f_{m}$ and $u_{0m}$ have finite supports in $Q_T$ and
$\Omega$, they satisfy the first-order compatibility conditions on the set
$\partial\Omega \times \{t=0\}$. By \cite[Ch.IV, Th.5.2]{LSU}
problem \eqref{eq:aux-1} has  a unique classical
solution $u\in C^{2+\alpha,1+\frac{\alpha}{2}}(\overline{Q}_T)$ which
satisfies the estimate

\begin{equation}
\label{eq:aux-2}
\|u\|_{C^{2+\alpha,1+\frac{\alpha}{2}}(\overline Q_T)}\leq C\left(\tau\|f_m\|_{C^{\alpha,\frac{\alpha}{2}}(\overline{Q}_T)}+\tau \|u_{0m}\|_{C^{2+\alpha}(\overline\Omega)}\right)
\end{equation}
with a constant $C$ depending on $\|\nabla v\|_{\alpha,\frac{\alpha}{2},Q_T}$, $N$, $\partial\Omega$, $\epsilon$,
 $p^\pm$. Problem \eqref{eq:aux-1} defines the mapping

\[
u=\Phi(v,\tau): \,(v,\tau)\mapsto u,
\]
and a fixed point of the mapping $\Phi(v,1)$ is a solution of the nonlinear problem \eqref{eq:main-reg-1}. The existence of a fixed point will follow from the Leray-Schauder principle (see, e.g., \cite[Ch.IV, Th.8.1]{LU}). Let

\[
B_R=\left\{v:\,\|v\|_{C^{1+\alpha,\frac{1+\alpha}{2}}(\overline{Q}_T)}<R\right\}
\]
be the ball of radius $R$ to be defined, and $S_R=B_R\times [0,1]$. Assume that

\begin{enumerate}
\item the equation $u=\Phi(v,0)$ only has the trivial solution,

\item $\Phi:\,C^{1+\alpha,\frac{1+\alpha}{2}}(\overline{Q}_T)\times [0,1]\mapsto C^{1+\alpha,\frac{1+\alpha}{2}}(\overline{Q}_T)$ is compact,

\item $\Phi$ is uniformly continuous in $\tau$ on $\overline{S}_R$,

\item the boundary of the ball $B_R$ does not contain fixed points of the mapping $u=\Phi(v,\tau)$.

    \end{enumerate}
Then the equation $u=\Phi(u,\tau)$ has at least one solution for every $\tau\in [0,1]$.

Let us consider $\Phi$ as the mapping

\[
\Phi:\,\overline S_R \mapsto C^{1+\alpha,\frac{1+\alpha}{2}}(\overline Q_T).%
\]

(1) Problem \eqref{eq:aux-1} with $\tau=0$ is the linear uniformly parabolic
equation with zero initial and boundary data. By the maximum principle the
unique solution of this problem is $u=0$.

(2) By virtue of \eqref{eq:aux-2}, for every $R>0$ the image of $\overline S_R$ is bounded in $C^{2+\alpha,1+\frac{\alpha}{2}}(\overline Q_T)\subset C^{1+\alpha,\frac{1+\alpha}{2}}(\overline Q_T)$ and, therefore, is compact in $C^{1+\alpha,\frac{1+\alpha}{2}}(\overline Q_T)$.

(3) Uniform continuity with respect to $\tau$. Let $u_1$, $u_2$ be the solutions of the equations $u_i=\Phi(v,\tau_i)$, $i=1,2$. Combining equations \eqref{eq:aux-1} we find that the difference $w=u_2-u_1$ is a solution of the linear problem

\[
\begin{split}
& w_t-(\epsilon^2+|\nabla v|^2)^{\frac{p_m-2}{2}}\sum_{i,j=1}^N a_{ijm}(z, \nabla v)D^2_{ij}w
\\
& \qquad \qquad
=
h_{\epsilon m}(z,v)w+\vec g_{\epsilon m}(z,\nabla v)\cdot \nabla w+(\tau_2-\tau_1)f_m,
\\
& \text{$w(x,0)=(\tau_2-\tau_1)u_{0m}$ in $\Omega$},\quad \text{$w=0$ on $\partial\Omega\times (0,T)$}.
\end{split}
\]
By virtue of \eqref{eq:aux-2} $\|w\|_{C^{2+\alpha,1+\frac{\alpha}{2}}}(\overline{Q}_T)\to 0$ as $|\tau_2-\tau_1|\to 0$.

(4) To check that the boundary of the set $S_R$ does not contain fixed points of the mapping $\Phi$ suffices to prove that all possible fixed points belong to a ball $B_{R'}$ of a smaller radius $R'<R$. The proof amounts to deriving the a priori estimate

\begin{equation}
\label{eq:Holder-main}
\|u\|_{C^{1+\alpha,\frac{1+\alpha}{2}}(\overline{Q}_T)}\leq R'
\end{equation}
for all possible fixed points of $\Phi(\tau,v)$ and choosing $R=R'+1$. Let
$u\in C^{1+\alpha,\frac{1+\alpha}{2}}(\overline{Q}_T)$ be a fixed point of the
mapping $\Phi(v,\tau)$ with some $\tau\in [0,1]$. On the one hand, $u$ is a
classical solution of the linear equation \eqref{eq:aux-1} with the smooth
data $\tau f_m\in C_0^{\infty}(Q_T)$ and $\tau u_{0m}\in C_0^\infty(\Omega)$,
$\tau\in [0,1]$. On the other hand, $u$ is a strong solution of the nonlinear
problem \eqref{eq:main-reg-1}.

The variable nonlinearity of equation \eqref{eq:main-reg-1} prevents one from deriving \eqref{eq:Holder-main} from the classical parabolic theory. To obtain \eqref{eq:Holder-main}, we make use of the recent results on the gradient regularity of weak solutions to nonlinear equations of the type \eqref{eq:main-reg}. It is proven in \cite{Yao-2015} that for the solutions of equation
\eqref{eq:main} with a given source term in divergence form
$\operatorname{div}(|\mathbf{f}|^{p(z)-2}\mathbf{f})$ one has $\nabla u\in
C^{\gamma,\frac{\gamma}{2}}_{loc}(Q_T)$, provided that the components of $\mathbf{f}$ are H\"older continuous. This local result
allows one to estimate the H\"older norm of $\nabla u$ in every domain of the
form $\overline{\Omega}\times (\delta,T)$ with smooth $\partial\Omega$ and any $\delta>0$ - \cite{NA-Sh-2018}. This is done by means of local rectification of the boundary $\partial\Omega$ and
application of the local result of \cite{Yao-2015} to the odd continuation of
the solution across the lateral boundary to the exterior of the problem domain, see \cite[Lemma 1.3, (2)]{NA-Sh-2018}.

In \cite{Ding-Zhang-Zhou-2020}, the results of \cite{Yao-2015, NA-Sh-2018}
were extended to a class of equations with nonlinear sources of general form
that includes \eqref{eq:source} as a partial case. Let us consider the problem

\begin{equation}
\label{eq:Zhang-2020}
\begin{split}
& u_t-\operatorname{div} \mathcal{A}(z,\nabla u)\nabla u=\mathcal{B}(z,u,\nabla u) \quad \text{in $Q_T$},
\\
& \text{$u=0$ on $\partial\Omega\times (0,T)$},\qquad \text{$u(x,0)=u_0(x)$ in $\Omega$},
\end{split}
\end{equation}
where

\begin{equation}
\label{eq:Zhang-cond}
\begin{split}
& \text{$\mathcal{A}(z,\nabla u)\equiv |\nabla u|^{p_m(z) -2}\nabla u$},
\\
& \text{$\mathcal{B}$ satisfies the growth condition}
\\
& \qquad |\mathcal{B}(z,u,\xi)|\leq
b_1|\xi|^{p_m(z) -1}+b_2|u|^{q_m(z)-1}+b_3, \quad b_1,b_2,b_3=const\geq 0,
\\
& \text{the exponents $p_m(z)$, $q_m(z)$ satisfy conditions \eqref{eq:p}, \eqref{eq:q} and \eqref{eq:data-reg}}.
\end{split}
\end{equation}
A function $u\in C^0([0,T];L^2(\Omega))\cap
\mathbf{W}_{p_m(\cdot)}(Q_T)\cap L^{q_m(\cdot)}(Q_T)$ is called a weak solution of problem \eqref{eq:Zhang-2020} if

\begin{equation}
\label{eq:def-weak}
\int_{Q_T}\left(-u\phi_t+\mathcal{A}(z,\nabla u)\cdot \nabla \phi -\mathcal{B}(z,u,\nabla u)\phi\right)\,dz=\int_{\Omega} \phi(x,0)u_0\,dx
\end{equation}
for every $\phi\in L^{q_m(\cdot)}(Q_T)\cap \mathbf{W}_{p_m(\cdot)}(Q_T)$ with
$\phi_t\in L^2(Q_T)$ and $\phi(x,T)=0$.

\begin{proposition}[\cite{Ding-Zhang-Zhou-2020}, Th.1]
\label{pro:max}
Let conditions \eqref{eq:Zhang-cond} be fulfilled, $p_m\in C_{{\rm log}}(\overline Q_T)$ and $\partial\Omega\in C^1$. There exist positive constants $\gamma$, $C$, depending only on the constants in conditions \eqref{eq:p} and \eqref{eq:Zhang-cond} such that for every weak solution $u$ of problem \eqref{eq:Zhang-2020}

\begin{equation}
\label{eq:max}
\|u(\cdot,t)\|_{\infty,\Omega}\leq \|u_0\|_{\infty,\Omega} + C\left(1+ \left(\int_{Q_T}|u|^{\lambda(z)}\,dz\right)^{\gamma}\right)
\end{equation}
with $\lambda(z)=\max\{2,p_m(z) \}$.
\end{proposition}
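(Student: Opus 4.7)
The plan is to prove the $L^\infty$ bound through a De Giorgi--type truncation iteration, adapted to the variable exponent setting. For $k \geq k_0 := \|u_0\|_{\infty,\Omega}$, I would test the weak formulation \eqref{eq:def-weak} with a Steklov-regularised version of $\phi = (u-k)_+ \chi_{[0,t]}$, which is admissible after a density argument and vanishes on the lateral boundary since $u=0$ there. This yields the basic energy inequality
\[
\frac{1}{2}\int_{\Omega}(u-k)_+^2(x,t)\,dx + \int_0^t\!\int_{\{u>k\}}|\nabla u|^{p_m(z)}\,dx\,d\tau \leq \int_0^t\!\int_{\{u>k\}}|\mathcal{B}(z,u,\nabla u)|(u-k)_+\,dx\,d\tau,
\]
since the initial term $\int_\Omega (u_0-k)_+^2\,dx$ vanishes for $k\geq k_0$.

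Next I would control the right-hand side using the growth condition in \eqref{eq:Zhang-cond}. The gradient term is treated by Young's inequality, $b_1|\nabla u|^{p_m(z)-1}(u-k)_+ \leq \delta|\nabla u|^{p_m(z)} + C_\delta (u-k)_+^{p_m(z)}$, with the first summand absorbed into the diffusion. The zeroth-order term $b_2|u|^{q_m(z)-1}(u-k)_+$ and the constant $b_3(u-k)_+$ are then bounded by $C(1+|u|^{\lambda(z)})\chi_{\{u>k\}}$. Introducing $A_k(t) := \{x\in\Omega : u(x,t)>k\}$ and the cumulative measure $Y_k := \int_0^T \operatorname{meas} A_k(t)\,dt$, this produces control of the energy of $(u-k)_+$ in terms of $\int_{\{u>k\}}(1+|u|^{\lambda(z)})\,dz$ together with a factor involving the measure of the super-level set.

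I would then invoke the parabolic Sobolev-type embedding for variable exponents,
\[
\left(\int_{Q_T}(u-k)_+^{p_m(z)\kappa(z)}\,dz\right)^{1/\kappa^-} \leq C\left(\operatorname{ess\,sup}_{t}\|(u-k)_+\|_{2,\Omega} + \|\nabla (u-k)_+\|_{p_m(\cdot),Q_T}\right)^{\theta},
\]
for a suitable $\kappa(z)>1$ whose existence rests on the log-continuity of $p_m$ and on the subcritical margin granted by $p^- > \frac{2(N+1)}{N+2}$. Combining this embedding with the energy inequality and the elementary bound $(h-k)^\alpha \operatorname{meas} A_h(t) \leq \int_{A_k(t)}(u-k)_+^\alpha\,dx$ yields an iteration of the form $Y_{k_{j+1}} \leq C b^{j} Y_{k_j}^{1+\sigma}$ along the levels $k_j = k_0 + M(1-2^{-j})$, with some $\sigma>0$. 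A standard geometric lemma then forces $Y_{k_\infty} = 0$ provided
\[
M \geq C\left(1 + \left(\int_{Q_T}|u|^{\lambda(z)}\,dz\right)^{\gamma}\right),
\]
which is exactly the asserted estimate \eqref{eq:max}.

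The main obstacle will be the variable-exponent Sobolev step and its interplay with the source. Because $p_m(z)$ varies, the natural parabolic Sobolev exponent moves with $z$, and the switch between the modular $A_{p(\cdot)}$ and the Luxemburg norm via \eqref{i0} must produce constants that remain stable as one climbs the truncation levels $k_j$. The log-continuity \eqref{eq:log-cont} of $p_m$ secures a uniform embedding constant, while the condition $q^+ < 1 + p^-\frac{N+2}{2N}$ built into \eqref{eq:q} keeps the $|u|^{q_m-1}$ term strictly subcritical with respect to the parabolic scaling; together they guarantee that the exponent gain $\sigma > 0$ persists throughout the iteration and that the source contributions collapse into the universal factor $\int_{Q_T}|u|^{\lambda(z)}\,dz$ appearing on the right of \eqref{eq:max}.
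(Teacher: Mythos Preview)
The paper does not actually prove this proposition: it is quoted verbatim as Theorem~1 of \cite{Ding-Zhang-Zhou-2020} and used as a black box. So there is nothing to compare your argument against in the present paper.

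That said, your sketch follows the expected De~Giorgi truncation-iteration route, which is indeed the method behind results of this type (and is the one used in \cite{Ding-Zhang-Zhou-2020}). The outline is correct in spirit: energy inequality for $(u-k)_+$ with $k\ge\|u_0\|_{\infty,\Omega}$, absorption of the gradient source via Young's inequality, parabolic embedding to gain integrability, and a geometric convergence lemma on the super-level measures. Two points would need more care if you wanted to make this rigorous. First, the variable-exponent parabolic embedding you write down is not quite in standard form; in practice one localises in space-time so that the oscillation of $p_m$ is small (this is where $p_m\in C_{\rm log}$ is used) and reduces to a constant-exponent embedding on each localisation, then patches via a covering argument. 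Second, when you pass from the modular to the Luxemburg norm and back along the sequence of levels $k_j$, you must check that the constants do not degenerate as $j\to\infty$; this is exactly where the quantity $\int_{Q_T}|u|^{\lambda(z)}\,dz$ enters as the universal seed for the iteration. Your remark about the subcriticality condition on $q^+$ is on target.
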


\begin{proposition}[\cite{Ding-Zhang-Zhou-2020}, Th.2]
\label{pro:local/global}
Let $u$ be a bounded weak solution of problem \eqref{eq:Zhang-2020}. Assume that conditions \eqref{eq:Zhang-cond} are fulfilled, $p_m \in C^{\alpha,\frac{\alpha}{2}}(\overline Q_T)$, and $\partial\Omega\in C^{1+\beta}$, $\alpha,\beta\in (0,1)$.  Then for any $\delta>0$ there exists $\gamma\in (0,1)$ such that $\nabla u\in C^{\gamma,\frac{\gamma}{2}}([\delta,T]\times \overline\Omega)$.
\end{proposition}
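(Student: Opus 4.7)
My plan is to reduce the global-in-space, interior-in-time statement to a local interior H\"older gradient estimate for the $p_m(z)$-parabolic equation with a natural-growth source, by means of boundary flattening and odd reflection. The restriction $t\geq\delta$ simply reflects the absence of smoothness assumptions on the initial datum.

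First I would establish the interior version: for every compact $K\Subset Q_T$ there exists $\gamma\in(0,1)$ such that $\nabla u\in C^{\gamma,\gamma/2}(K)$. The appropriate tool is DiBenedetto--Friedman-type intrinsic scaling combined with freezing of the exponent at the centre of an intrinsic cylinder. Fix $z_0\in K$, set $p_0=p_m(z_0)$, and work in an intrinsic cylinder $Q_\rho^\lambda(z_0)$ tuned to $\lambda^{p_0-2}\rho^2$. The $C_{\log}$-continuity of $p_m$ yields $|\nabla u|^{p_m(z)-p_0}\le C$ on such cylinders, so that the variable-exponent equation is a controlled perturbation of the frozen $p_0$-parabolic equation. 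Comparison with the frozen problem then provides a Campanato-type decay for the oscillation of $\nabla u$ and hence the required local H\"older estimate. The natural-growth source $\mathcal{B}$ is absorbed along the way: $|u|^{q_m-1}$ is bounded thanks to Proposition \ref{pro:max}, the term $b_1|\nabla u|^{p_m-1}$ has subcritical growth in $|\nabla u|$ and fits into standard Caccioppoli estimates via Young's inequality, and $b_3$ is trivially harmless.

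Second I would lift this interior estimate up to the lateral boundary $\partial\Omega\times(\delta,T)$. Near $x_0\in\partial\Omega$, the $C^{1+\beta}$-regularity of $\partial\Omega$ permits a local diffeomorphism straightening $\partial\Omega$ to a hyperplane; in the new coordinates the principal part becomes $\operatorname{div}(A(z,\nabla u)|\nabla u|^{\tilde p_m-2}\nabla u)$ with $A$ uniformly elliptic and $C^{0,\beta}$ in $z$, while the transported exponent $\tilde p_m$ retains its $C^{\alpha,\alpha/2}$-regularity. On the resulting half-cylinder one has zero Dirichlet data on the flat part; extending the solution by odd reflection across that part produces a weak solution of a structurally similar equation on a full cylinder, to which the interior estimate of Step~1 applies. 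A finite covering of $\partial\Omega$, combined with the interior estimate for points away from $\partial\Omega$, then delivers the claimed global-in-space, interior-in-time regularity on $\overline\Omega\times[\delta,T]$.

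The main obstacle is the interior step: establishing the Campanato decay of $\nabla u$ for the variable-exponent parabolic equation with a natural-growth lower-order term, uniformly on intrinsic cylinders, requires careful tracking of the dependence of the comparison estimates on $\lambda$ and $\rho$ and verification that the source does not destroy the geometric iteration. This is precisely what is carried out in \cite{Ding-Zhang-Zhou-2020}, extending \cite{Yao-2015, NA-Sh-2018} to the source-bearing setting; once a workable interior estimate is in hand, the boundary passage via flattening and odd reflection is comparatively routine.
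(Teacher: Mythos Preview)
Your outline is correct and matches the approach the paper itself attributes to this result: the proposition is quoted directly from \cite{Ding-Zhang-Zhou-2020} without proof, and the surrounding discussion in the paper describes exactly the scheme you propose --- a local interior H\"older gradient estimate (\cite{Yao-2015}) passed to the lateral boundary by straightening $\partial\Omega$ and applying the interior result to the odd continuation of the solution (\cite{NA-Sh-2018}), with \cite{Ding-Zhang-Zhou-2020} supplying the extension to nonlinear sources of the form \eqref{eq:Zhang-cond}. There is nothing to compare: the paper does not give its own proof, and your sketch is an accurate summary of the cited argument.
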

A revision of the proof of Proposition \ref{pro:local/global} shows that the assertion holds true for $\mathcal{A}(z,\nabla u)$ substituted by $\mathcal{F}_{\epsilon m}(z,\nabla u)$ with $\epsilon>0$.

\begin{lemma}
\label{le:first-est} Under conditions \eqref{eq:p}-\eqref{eq:data}  and
\eqref{eq:data-reg} every strong solution of problem \eqref{eq:main-reg-1}
with the data $\tau f_m$, $\tau u_{0m}$ and $\textbf{data}_m$ satisfies the
estimate

\begin{equation}
\label{eq:est-first}
\begin{split}
\sup_{(0,T)}\|u\|^2_{2,\Omega}+\int_{Q_T}|\nabla u|^{p_m(z)}\,dz & \leq
C\left(1+\tau^2\|f_m\|_{2,\Omega}^{2}+\tau^2\|u_{0m}\|_{2,\Omega}^{2}\right)
\\
&
\leq
C'\left(1+\|f\|_{2,\Omega}^{2}+\|u_{0}\|_{2,\Omega}^{2}\right)
\end{split}
\end{equation}
with a constant $C'$ that depends on $N$, $p^-$, $q^+$, $s^+$,
$T$, $|\Omega|$, but independent of $\epsilon$, $\tau$, and $m$.
\end{lemma}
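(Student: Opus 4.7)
The plan is to obtain \eqref{eq:est-first} by the classical energy method, testing equation \eqref{eq:main-reg-1} against $u$ itself. This is legitimate because a strong solution satisfies $u_t\in L^2(Q_T)$ and $u\in \mathbf{W}_{p_m(\cdot)}(Q_T)$, so $u$ is an admissible test function in Definition \ref{def:1}. Multiplying the equation by $u$ and integrating over $\Omega$, then using $u|_{\partial\Omega}=0$ in the integration by parts of the divergence term, produces the identity
\[
\tfrac{1}{2}\tfrac{d}{dt}\|u\|_{2,\Omega}^2+\int_\Omega(\epsilon^2+|\nabla u|^2)^{\frac{p_m-2}{2}}|\nabla u|^2\,dx=\int_\Omega F_{\epsilon m}(z,u,\nabla u)u\,dx+\tau\int_\Omega f_mu\,dx.
\]
The coercivity property \eqref{eq:coercive}, combined with $\epsilon<1$ and $|\Omega|<\infty$, gives the lower bound $C\int_\Omega|\nabla u|^{p_m(z)}\,dx-C'$ for the elliptic term, with constants independent of $\epsilon$ and $m$.

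Next I would estimate the three terms on the right-hand side. The free term is handled trivially by Young's inequality: $\tau f_mu\leq\tfrac{\tau^2}{2}f_m^2+\tfrac{1}{2}u^2$. For the gradient piece of $F_{\epsilon m}$, the pointwise bound $(\epsilon^2+|\nabla u|^2)^{\frac{s_m-1}{2}}\leq C(1+|\nabla u|^{s^+-1})$ and Young's inequality give
\[
\bigl|(\epsilon^2+|\nabla u|^2)^{\frac{s_m-2}{2}}(\vec c_m,\nabla u)u\bigr|\leq \delta|\nabla u|^{p_m(z)}+C_\delta(1+|u|^{\frac{p^-}{p^--s^++1}}),
\]
and the condition $s^+\leq p^--2\mu$ from \eqref{eq:s} guarantees the exponent on $|u|$ is finite and strictly smaller than the critical Sobolev exponent, while the $\delta|\nabla u|^{p_m}$ is absorbed into the elliptic term. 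For the $q$-piece, the elementary estimate $(\epsilon^2+u^2)^{\frac{q_m-2}{2}}u^2\leq C(1+|u|^{q^+})$ (valid uniformly in $\epsilon\in(0,1)$ by splitting the cases $q_m\gtrless 2$) reduces the task to bounding $\int_0^t\int_\Omega|u|^{q^+}\,dx\,d\tau$.

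The main obstacle is thus controlling $\int_0^T\int_\Omega|u|^{q^+}\,dx\,d\tau$ (and the analogous term arising from the $s$-contribution) in terms of the sought-after quantities $\sup_t\|u\|_{2,\Omega}^2$ and $\int_{Q_T}|\nabla u|^{p_m(z)}\,dz$. This is precisely where the subcritical assumption $q^+<1+p^-\frac{N+2}{2N}-2\mu$ of \eqref{eq:q} is used: by the Gagliardo--Nirenberg interpolation
\[
\|u\|_{q^+,\Omega}^{q^+}\leq C\|\nabla u\|_{p^-,\Omega}^{q^+\theta}\|u\|_{2,\Omega}^{q^+(1-\theta)},
\]
with $\theta\in(0,1)$ determined by scaling, the condition ensures $q^+\theta<p^-$, so that Young's inequality absorbs the gradient into the elliptic term and leaves a power of $\|u\|_{2,\Omega}$ with exponent $\leq 2$ after time integration. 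The same interpolation handles the residual $|u|^{p^-/(p^--s^++1)}$ term since $s^+\leq p^--2\mu$.

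After all absorptions, the identity reduces to a differential inequality of the form
\[
\tfrac{d}{dt}\|u\|_{2,\Omega}^2+c\int_\Omega|\nabla u|^{p_m(z)}\,dx\leq C\bigl(1+\tau^2\|f_m\|_{2,\Omega}^2+\|u\|_{2,\Omega}^{2}\bigr),
\]
to which Gronwall's lemma applies on $[0,T]$, yielding the bound on $\sup_t\|u\|_{2,\Omega}^2$; integration then produces the bound on $\int_{Q_T}|\nabla u|^{p_m(z)}\,dz$. Since all constants depend only on $N,p^-,q^+,s^+,T,|\Omega|$ and the convergences \eqref{eq:data-reg}, \eqref{eq:data-reg-0}, the final form \eqref{eq:est-first} with constant $C'$ independent of $\epsilon$, $\tau$, and $m$ follows by replacing $\|f_m\|_{2,\Omega}$ and $\|u_{0m}\|_{2,\Omega}$ by their limits.
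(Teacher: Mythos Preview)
Your overall strategy---test the equation against $u$, use coercivity of the flux, absorb the source terms, and apply Gronwall---matches the paper. The handling of $f_m$ and the coercivity step are fine. The gap is in the treatment of the $|u|^{q^+}$ term (and by the same token the residual $|u|^{p^-/(p^--s^++1)}$ term) via Gagliardo--Nirenberg.

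Your claim that after Young's inequality the residual power of $\|u\|_{2,\Omega}$ is $\leq 2$ is not correct in general. With $q^+>2$ the G--N exponent is $\theta=\dfrac{\tfrac12-\tfrac1{q^+}}{\tfrac{N+2}{2N}-\tfrac{1}{p^-}}$, and the condition $q^+<1+p^-\tfrac{N+2}{2N}$ does give $q^+\theta<p^-$, but the conjugate exponent leaves $\|u\|_{2,\Omega}^{\alpha}$ with
\[
\alpha=\frac{q^+(1-\theta)p^-}{p^--q^+\theta},
\]
and a direct check shows $\alpha>2$ whenever $p^->2$ and $q^+>2$ (take e.g.\ $N=3$, $p^-=3$, $q^+=2.5$: then $\theta=1/5$ and $\alpha=12/5$). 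With $\alpha>2$ the differential inequality becomes $\tfrac{d}{dt}\|u\|_2^2\leq C(1+\|u\|_2^{\alpha})$, which does \emph{not} give an a~priori bound by Gronwall. Integrating in time first and then interpolating runs into the same obstruction.

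The paper closes the estimate by invoking the \emph{other} half of condition \eqref{eq:q}, namely $q^+<p^--2\mu$, together with the Poincar\'e inequality (constant exponent $p_m^--\mu$):
\[
\int_\Omega(\epsilon^2+u^2)^{\frac{q_m-2}{2}}u^2\,dx\leq C\Big(1+\int_\Omega|u|^{p_m^--\mu}\,dx\Big)\leq C'\Big(1+\int_\Omega|\nabla u|^{p_m^--\mu}\,dx\Big)\leq C''+\delta\int_\Omega|\nabla u|^{p_m}\,dx,
\]
and the $s$-term is treated analogously using $s^+\leq p^--2\mu$. After absorption the inequality is genuinely of Gronwall type. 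The Gagliardo--Nirenberg interpolation you propose \emph{is} used in the paper, but only later (Proposition~\ref{pro:2-est-1}), for the second a~priori estimate, where the bound $\int_{Q_T}|\nabla u|^{p_m}\leq C$ from the present lemma is already available and can be fed into the right-hand side.
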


\begin{proof}
Take $t,t+h\in [0,T]$, $h>0$. Let $\chi_{[t,h+h]}$ be the characteristic function of the interval $[t,t+h]\subset (0,T)$. Choose in \eqref{eq:def-1} $\phi=u\chi_{[t,t+h]}$, divide the resulting equality by $h$, and send $h\to 0^+$. By the Lebesgue differentiation theorem, for a.e. $t\in (0,T)$

\begin{equation}
\label{eq:1st-ODI}
\begin{split}
\frac{1}{2} \dfrac{d}{dt}\|u(t)\|_{2,\Omega}^{2} & +\int_{\Omega}(\epsilon^2+|\nabla u|^2)^{\frac{p_m-2}{2}}|\nabla u|^2\,dx
\\
&
=\int_{\Omega}F_{\epsilon m}(z,u,\nabla u)\,dx +\tau\int_{\Omega}f_mu\,dx
\\
& \leq C\bigg(\tau^2\|f_m\|_{2,\Omega}^2+\|u\|_{2,\Omega}^2 +\int_{\Omega}(\epsilon^2+|u|^2)^{\frac{q_m-2}{2}}|u|^{2}\,dx \\
& \qquad \qquad + \int_{\Omega}(\epsilon^2+|\nabla u|^2)^{\frac{s_m-2}{2}}|\nabla u||u|\,dx\bigg)
\\
& \equiv C\left(\tau^2\|f_m\|_{2,\Omega}^2+\|u\|_{2,\Omega}^2\right)+C\Psi(t)
\end{split}
\end{equation}
with a constant $C=C(a^+,c_i^+)$. Inequality \eqref{eq:1st-ODI} can be written as

\[
\dfrac{d}{dt}\left({\rm e}^{-Ct}\|u(t)\|_{2,\Omega}^2\right)+ 2{\rm e}^{-Ct}\int_{\Omega} (\epsilon^2+|\nabla u|^2)^{\frac{p_m-2}{2}}|\nabla u|^2\,dx\leq 2C\tau^2{\rm e}^{-Ct}\|f_m\|_{2,\Omega}^2+2C{\rm e}^{-Ct}\Psi(t).
\]
Integration in $t$ leads to the inequality

\begin{equation}
\label{eq:1st-est}
\begin{split}
\sup_{(0,T)}\|u(t)\|^2_{2,\Omega} & + \int_{Q_T}(\epsilon^2+|\nabla u|^2)^{\frac{p_m-2}{2}}|\nabla u|^2\,dz
\\
& \leq C'{\rm e}^{CT}\left(\|u_{0m}\|_{2,\Omega}^{2}+ \tau^2 \|f_m\|^2_{2,Q_T}
+\int_0^T\Psi(t)\,dt\right)
\\
& \leq C''{\rm e}^{CT}\left(1+\|u_{0}\|_{2,\Omega}^{2}+\|f\|^2_{2,Q_T}+ \int_0^T\Psi(t)\,dt\right)
\end{split}
\end{equation}
with an independent of $u$ constant $C''$. At every point where $p_m(z) \geq 2$ we have

\[
|\nabla u|^{p_m}\leq (\epsilon^2 +|\nabla u|^2)^{\frac{p_m-2}{2}}|\nabla u|^{2}.
\]
If $p_m(z) <2$, then

\begin{equation}
\label{eq:elem-1}
\begin{split}
|\nabla u|^{p_m} & \leq \begin{cases}
2^{\frac{2-p_m}{2}}(\epsilon^2 +|\nabla u|^2)^{\frac{p_m-2}{2}}|\nabla u|^2 & \text{if $\epsilon^2<|\nabla u|^2$},
\\
\epsilon^p_m & \text{if $\epsilon^2\geq |\nabla u|^2$}
\end{cases}
\\
&
\leq 1+ 2(\epsilon^2 +|\nabla u|^2)^{\frac{p_m-2}{2}}|\nabla u|^2.
\end{split}
\end{equation}
This observation allows one to rewrite \eqref{eq:1st-est} as
\begin{equation}
\label{eq:1st-est-prim}
\sup_{(0,T)}\|u(t)\|^2_{2,\Omega} + C\int_{Q_T}|\nabla u|^{p_m}\,dz\leq C'{\rm e}^{CT}\left(1+\|u_{0}\|_{2,\Omega}^{2} + \|f\|_{2,Q_T}^2+\int_0^T\Psi(t)\,dt\right).
\end{equation}
To estimate the first term of $\Psi(t)$ we use the assumption $q_m^+\leq p_m^--\mu$, apply the Poincar\'e inequality with the constant exponent $p_m^--\mu$, and then make use of the Young inequality: for any $\delta>0$

\begin{equation}
\label{eq:est-1}
\begin{split}
\int_{\Omega}(\epsilon^2+|u|^2)^{\frac{q_m-2}{2}}u^2\,dx & \leq C\left(1+\int_{\Omega}|u|^{ q_m^+}\,dx\right)\leq  C\left(1+\int_\Omega |u|^{p_m^--\mu}\,dx\right)
\\
&
\leq C'\left(1+\int_{\Omega}|\nabla u|^{p_m^--\mu}\,dx \right)
\leq C''+\delta \int_{\Omega}|\nabla u|^{p_m}\,dx.
\end{split}
\end{equation}
For a sufficiently small $\delta$, after integration in $t$ this term is absorbed in the left-hand side of \eqref{eq:1st-est-prim}. The second term of $\Psi(t)$ is estimated likewise: by Young's inequality

\[
\begin{split}
\int_\Omega |u|(\epsilon^2+|\nabla u|^{2})^{\frac{s_m-2}{2}}|\nabla u|\,dx
& \leq \int_{\Omega}|u| (\epsilon^2+|\nabla u|^{2})^{\frac{s_m-1}{2}}\,dx
\\
&
\leq C_\delta\int_\Omega |u|^{p_m^--\mu}\,dx + \delta \int_\Omega (\epsilon^2+|\nabla u|^2)^{\frac{s_m-1}{2}(p_m^--\mu)'}\,dx.
\end{split}
\]
The first integral is already estimated in \eqref{eq:est-1}. To estimate the second one we notice that the assumption $s_m(z)\leq p_m^--\mu$ yields

\[
(s_m-1)(p_m^--\mu)'=(s_m-1)\frac{p_m^--\mu}{p_m^--\mu-1}\leq p_m^--\mu
\]
and once again apply Young's inequality:

\[
\begin{split}
|\nabla u|^{(s_m-1)(p_m^--\mu)'} & \leq |\nabla u|^{p_m^--\mu} + 1\leq \delta |\nabla u|^{p_m}+C_\delta
,\qquad \text{with any $\delta>0$}.
\end{split}
\]
It follows that

\[
C\int_{0}^T\Psi(t)\,dt\leq C+\delta \int_{Q_T}|\nabla u|^{p_m}\,dz
\]
with any $\delta>0$ and a constant $C$ depending on $T$, $|\Omega|$, $p^\pm$, $\mu$, but independent of $\epsilon$ and $m$. Choosing $\delta$ sufficiently small and moving  the terms containing $|\nabla u|^{p_m}$ to the left-hand side of \eqref{eq:1st-est-prim}  we arrive at \eqref{eq:est-first}.
\end{proof}

\begin{proposition}[\cite{SSS-2022}, Lemma 4.4]
\label{pro:extra-int}
Assume that $\partial \Omega\in Lip$, $p\in C^{0}(\overline Q_T)$, $u\in L^\infty(0,T;L^2(\Omega))\cap \mathbf{W}_{p(\cdot)}(Q_T)$. If

\[
\operatorname{ess}\sup_{(0,T)}\|u\|_{2,\Omega}^2+\int_{Q_T}|\nabla u|^{p(z) }\,dz\leq M,
\]
then for every $\lambda \in \left[\left.0,\frac{1}{N}\right)\right.$

\[
\|u\|_{p(\cdot)+\lambda,Q_T}\leq C
\]
with a constant $C$ depending on $M$, $\lambda$, $N$, $|\Omega|$, $p^\pm$ and the modulus of continuity of $p$ in $Q_T$.
\end{proposition}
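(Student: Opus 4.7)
The plan is a parabolic Gagliardo--Nirenberg interpolation carried out slicewise in $t$, combined with a reduction from the variable exponent to the constant $p^-$. For a.e.\ $t \in (0,T)$, H\"older's inequality with conjugate exponents $N/p^-$ and $N/(N-p^-)$ applied to the splitting $|u|^{p^- + 2p^-/N} = |u|^{2p^-/N} \cdot |u|^{p^-}$ yields
\[
\int_\Omega |u(x,t)|^{p^- + 2p^-/N}\,dx \leq \|u(\cdot,t)\|_{L^2(\Omega)}^{2p^-/N}\,\|u(\cdot,t)\|_{L^{(p^-)^*}(\Omega)}^{p^-},
\]
where $(p^-)^* = Np^-/(N-p^-)$ (the case $p^- \geq N$ is easier, with $L^{(p^-)^*}$ replaced by any $L^q$). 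The Sobolev embedding bounds the second factor by $C\|\nabla u(\cdot,t)\|_{L^{p^-}(\Omega)}^{p^-}$. Integrating in $t$ and using the hypothesis $\operatorname{ess\,sup}_{(0,T)}\|u\|^2_{L^2(\Omega)} \leq M$, combined with the embedding $L^{p(\cdot)}(Q_T) \hookrightarrow L^{p^-}(Q_T)$ (which converts the modular bound $\int_{Q_T}|\nabla u|^{p(z)}\,dz \leq M$ into a norm bound on $\|\nabla u\|_{L^{p^-}(Q_T)}$), produces
\[
\int_{Q_T} |u|^{p^- + 2p^-/N}\,dz \leq C
\]
with $C$ depending on $M$, $|\Omega|$, $T$, $p^\pm$, $N$.

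To pass from this constant-exponent estimate to the variable-exponent conclusion, I would cover $\overline Q_T$ by finitely many open sets on each of which the oscillation of $p$ is less than $\eta := 2p^-/N - 1/N > 0$, which is possible by the uniform continuity of $p \in C^0(\overline Q_T)$. On each covering element with local minimum $p^-_{\rm loc} \geq p^-$ and local maximum $p^+_{\rm loc}$, one has $p^-_{\rm loc} + 2p^-_{\rm loc}/N \geq p^+_{\rm loc} + 1/N > p(z) + \lambda$ for every $\lambda < 1/N$. Applying the slicewise Gagliardo--Nirenberg inequality above with the local constant $p^-_{\rm loc}$, handling the set $\{|u| \leq 1\}$ by the trivial bound $|u|^{p(z)+\lambda} \leq 1$ and the set $\{|u| > 1\}$ by $|u|^{p(z)+\lambda} \leq |u|^{p^+_{\rm loc}+\lambda}$, and summing over the finite covering, gives $\int_{Q_T}|u|^{p(z)+\lambda}\,dz \leq C$. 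The Luxemburg-norm estimate $\|u\|_{p(\cdot)+\lambda,Q_T} \leq C$ then follows from the modular--norm equivalence \eqref{i0}.

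The principal obstacle will be the uniform control of Sobolev constants across the covering; this is circumvented by using throughout the single global Sobolev constant of $W^{1,p^-}_0(\Omega)$, so that the covering cardinality --- which depends on $\eta$ and on the modulus of continuity $\omega$ of $p$ --- enters only as a multiplicative factor. The strict margin $\lambda < 1/N$, weaker than the sharp value $2p^-/N$ available in the constant-exponent case, is precisely the uniform buffer that accommodates the oscillation of $p$ and makes the covering argument close without requiring finer control on $\omega$.
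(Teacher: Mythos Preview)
The paper does not supply its own proof: the proposition is quoted directly from \cite{SSS-2022}. Your strategy --- slicewise Gagliardo--Nirenberg plus a finite covering to absorb the oscillation of $p$ --- is the standard one and matches that reference in outline.

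There is, however, a genuine gap in your last paragraph. You propose to apply the interpolation ``with the local constant $p^-_{\rm loc}$'' while ``using throughout the single global Sobolev constant of $W^{1,p^-}_0(\Omega)$''. These two moves do not cohere. If the interpolation is run on all of $\Omega$ with exponent $p^-_{\rm loc}$, you must control $\int_\Omega |\nabla u(x,t)|^{p^-_{\rm loc}}\,dx$; but for $x$ outside the spatial projection of the covering element one may have $p(x,t) < p^-_{\rm loc}$, so this integral is \emph{not} dominated by $\int_\Omega |\nabla u|^{p(\cdot,t)}\,dx + |\Omega|$. If instead you use the global exponent $p^-$ throughout, you reach only integrability of order $p^-(1+2/N)$, which controls $|u|^{p(z)+\lambda}$ only under the extraneous global restriction $p^+ - p^- \leq 2p^-/N - \lambda$ that the proposition does not impose. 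The correct localisation is to apply Gagliardo--Nirenberg on each covering ball $B_i$ itself, where $u$ need not vanish on $\partial B_i$ and the inequality therefore carries an additive lower-order term $\|u\|_{2,B_i}$; this term is harmless in view of the hypothesis $\operatorname{ess\,sup}_t \|u(t)\|_{2,\Omega}^2 \leq M$. On $B_i$ one genuinely has $p^-_{\rm loc} \leq p(x,t)$, hence $\int_{B_i}|\nabla u|^{p^-_{\rm loc}}\,dx \leq |B_i| + \int_{B_i}|\nabla u|^{p(x,t)}\,dx$, and the argument closes with uniform G--N constants on balls of a fixed radius; the covering cardinality then depends on the modulus of continuity of $p$, exactly as the statement records.
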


\begin{proposition}
\label{pro:max-reg} Under the assumptions of Lemma \ref{le:first-est}
every weak solution of problem \eqref{eq:main-reg-1} satisfies the estimate

\[
\operatorname{ess}\sup_{\Omega}|u(\cdot,t)|\leq \tau \operatorname{ess}\sup_{\Omega}|u_{0m}| + C
\]
with a constant $C$ which depends on $\tau\|f_m\|_{2,Q_T}$ and on the same quantities as the constant $C'$ in Lemma \ref{le:first-est},
but does not depend on $\epsilon$ and $m$.
\end{proposition}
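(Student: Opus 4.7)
The plan is to apply Proposition \ref{pro:max} to the fixed point $u$ of $\Phi(v,\tau)$, viewed as a weak solution of problem \eqref{eq:Zhang-2020}. Identify $\mathcal{A}(z,\nabla u) = \mathcal{F}_{\epsilon m}(z,\nabla u)$, which is admissible by the remark following Proposition \ref{pro:local/global}, and $\mathcal{B}(z,u,\nabla u) = F_{\epsilon m}(z,u,\nabla u) + \tau f_m$ with initial datum $\tau u_{0m}$.

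The first step is to verify the growth bound \eqref{eq:Zhang-cond}. Case-splitting with respect to whether $\epsilon$ dominates or is dominated by $|u|$ (resp.\ $|\nabla u|$) yields the elementary estimates $(\epsilon^2+t^2)^{(r-2)/2}|t| \leq C(1+|t|^{r-1})$, which applied separately to the two terms of $F_{\epsilon m}$ give $|F_{\epsilon m}| \leq C(1+|u|^{q_m(z)-1}+|\nabla u|^{s_m(z)-1})$. Combining this with the structural condition $s_m \leq p_m^--\mu$, which implies $|\nabla u|^{s_m(z)-1} \leq 1 + |\nabla u|^{p_m(z)-1}$ pointwise, one obtains
\[
|\mathcal{B}(z,u,\nabla u)| \leq b_1 |\nabla u|^{p_m(z)-1} + b_2 |u|^{q_m(z)-1} + b_3,
\]
with $b_1, b_2$ depending only on $a^+, c_i^+, p^\pm, \mu$, and $b_3$ accommodating $\tau\|f_m\|_\infty$, which is finite since $f_m \in C_0^\infty(Q_T)$. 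Proposition \ref{pro:max} then produces
\[
\|u(\cdot,t)\|_{\infty,\Omega} \leq \tau\operatorname{ess}\sup_{\Omega}|u_{0m}| + C\Bigl(1 + \Bigl(\int_{Q_T}|u|^{\lambda(z)}\,dz\Bigr)^{\gamma}\Bigr), \qquad \lambda(z)=\max\{2,p_m(z)\}.
\]

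It remains to bound the integral uniformly in $\epsilon$, $m$, and $\tau$. Split $Q_T=\{p_m\leq 2\}\cup\{p_m>2\}$: on the first set $|u|^{\lambda(z)}=u^2$, which integrates to at most $T\sup_t\|u\|_{2,\Omega}^2$ and is controlled by Lemma \ref{le:first-est}; on the second set $|u|^{\lambda(z)}=|u|^{p_m(z)}$, and since Lemma \ref{le:first-est} supplies the uniform bound $\sup_t\|u\|_{2,\Omega}^2 + \int_{Q_T}|\nabla u|^{p_m(z)}\,dz \leq M$ required by Proposition \ref{pro:extra-int}, the latter (with any $\nu\in(0,1/N)$) together with the modular--norm relation \eqref{i0} gives $\int_{Q_T}|u|^{p_m(z)+\nu}\,dz\leq C$, hence $\int_{Q_T}|u|^{p_m(z)}\,dz \leq |Q_T|+C$.

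The principal technical obstacle is ensuring that the constants in Proposition \ref{pro:max} depend on $\tau\|f_m\|_{2,Q_T}$ rather than $\|f_m\|_\infty$, since $f_m \to f$ only in $L^2(Q_T)$. This requires a close inspection of the Moser-type iteration of \cite{Ding-Zhang-Zhou-2020}, in which the source term $\tau f_m \cdot u$ must be absorbed by Young's inequality in $L^2$, leaving the subsequent iteration independent of $\|f_m\|_\infty$. The replacement $|\nabla u|^{p_m-2}\nabla u \mapsto \mathcal{F}_{\epsilon m}$ introduces only the bounded additive perturbation $2\epsilon^{p_m}\leq 2$ in the coercivity relation \eqref{eq:coercive}, and the $C^{0,1}$-convergence $p_m\to p$ preserves the log-continuity hypothesis of Proposition \ref{pro:max}; both technical modifications are therefore harmless and the desired bound follows.
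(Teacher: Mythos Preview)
Your proof is correct and follows essentially the same route as the paper: verify the growth hypothesis of Proposition~\ref{pro:max}, then bound $\int_{Q_T}|u|^{\max\{2,p_m\}}$ by splitting into the $L^2$ part (controlled by Lemma~\ref{le:first-est}) and the $L^{p_m(\cdot)}$ part (controlled by Proposition~\ref{pro:extra-int}). The paper's own proof is a three-line sketch that invokes exactly these ingredients without spelling out the growth-condition check or the modular--norm conversion you perform.

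Your final paragraph is more careful than the paper on one point: the stated independence of the constant from $m$ would, on a naive reading of Proposition~\ref{pro:max}, require $b_3$ to absorb $\tau\|f_m\|_\infty$, which is not uniform in $m$. The paper does not comment on this. Your observation that the Moser iteration in \cite{Ding-Zhang-Zhou-2020} actually handles the free term through its $L^2$ norm is the right resolution, and it is worth noting that for the paper's purposes the proposition is only invoked at fixed $m$ (inside the Leray--Schauder argument), so the issue is not load-bearing for the existence proof.
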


\begin{proof}
Notice that

\[
\int_{Q_T}|u|^{\max\{2,p_m(z) \}}\,dz\leq T\operatorname{ess}\sup_{(0,T)}\|u(t)\|_{2,\Omega}^2+\int_{Q_T}|u|^{p_m(z) }\,dz.
\]
By Lemma \ref{le:first-est} and Proposition \ref{pro:extra-int} both integrals are bounded, and the conclusion follows from Proposition \ref{pro:max}.
\end{proof}

We are now in position to derive \eqref{eq:Holder-main} and estimate all
possible fixed points of $\Phi(\tau,v)$ in the norm of
$C^{1+\beta,\frac{1+\beta}{2}}(\overline Q_T)$ with some $\beta\in (0,1)$. Let
$u$ be a fixed point of the mapping $\Phi(\tau,v)$, that is, a strong solution
of problem \eqref{eq:main-reg-1} with the data $\tau f_m$, $\tau u_{0m}$,
$\tau\in [0,1]$, and $\textbf{data}_m$. Since every strong solution is a weak
solution in the sense of \eqref{eq:def-weak}, by Proposition \ref{pro:max-reg}
$u$ is bounded in $\overline{Q}_T$. Fix some $\delta>0$. By Proposition
\ref{pro:local/global} there is $\gamma\in (0,1)$ such that $\nabla u\in
C^{\gamma,\gamma/2}([\delta,T]\times \overline\Omega)$. Set

\[
w(x,t)=\begin{cases}
u(x,t) & \text{if $t\geq 0$},
\\
\tau u_{0m} & \text{if $-T\leq t\leq 0$}.
\end{cases}
\]
The function $w$ is a weak solution of the equation

\[
w_t- \operatorname{div}\mathcal{F}_{\epsilon m}(z,\nabla w)= \mathcal{B}_{\epsilon m}(z,w,\nabla w)\quad \text{in $\Omega\times (-T,T)$}
\]
with

\[
\mathcal{B}_{\epsilon m}(z,w,\nabla w)=\begin{cases}
 h_{\epsilon m}(z,u)u + \vec g_{\epsilon m}(z,\nabla
u)\nabla u +\tau f_m & \text{if $t>0$},
\\
- \operatorname{div}\mathcal{F}_{\epsilon m}((x,0), \tau \nabla u_{0m}) & \text{if $t\leq 0$}.
\end{cases}
\]
Since $u_{0m}\in C_0^\infty(\Omega)$, we know that $\mathcal{B}_{\epsilon m}(z,w,\nabla w)$ satisfies conditions \eqref{eq:Zhang-cond}. Applying Proposition \ref{pro:local/global} to $w$ in the cylinder $\Omega\times [-T,T]$, for the function $u=w|_{\overline Q_T}$ we obtain the following global estimate.

\begin{lemma}
\label{le:global-grad} If conditions \eqref{eq:p}-\eqref{eq:data} are
fulfilled and $\partial\Omega\in C^{2+\alpha}$, then there exists $\gamma\in
(0,1)$ such that for every  strong solution of problem \eqref{eq:main-reg-1}
with the data $\tau f_m$, $\tau u_{0m}$, $\tau\in [0,1]$ and $\textbf{data}_m$

\[
\nabla u\in C^{\gamma,\gamma/2}(\overline{Q}_T),\qquad \|\nabla u\|_{C^{\gamma,\gamma/2}(\overline{Q}_T)}\leq C'
\]
with a constant $C'$ depending only on the data.
\end{lemma}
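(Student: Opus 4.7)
I would establish Lemma \ref{le:global-grad} by applying the extended Proposition \ref{pro:local/global} (with $\mathcal{A}$ replaced by $\mathcal{F}_{\epsilon m}$, as noted in the remark following that proposition) to the time-extension $w$ introduced immediately above the statement. The three ingredients to check are that $w$ is a bounded weak solution on the enlarged cylinder $\Omega\times(-T,T)$, that the extended source $\mathcal{B}_{\epsilon m}$ satisfies the growth condition \eqref{eq:Zhang-cond} uniformly across $t=0$, and that the proposition, applied with $\delta=T$, returns gradient H\"older continuity on $[0,T]\times\overline{\Omega}$ with a constant depending only on the data.

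For the first two ingredients, I would argue as follows. The matching $u(\cdot,0)=\tau u_{0m}$ together with $u_{0m}\in C_0^\infty(\Omega)$ gives $w\in C^0([-T,T];L^2(\Omega))\cap \mathbf{W}_{p_m(\cdot)}(\Omega\times(-T,T))$ with $w_t\in L^2$; the required uniform $L^\infty$-bound on $w$ combines Proposition \ref{pro:max-reg} on $[0,T]$ with the trivial bound $\|\tau u_{0m}\|_{\infty,\Omega}$ on $[-T,0]$, both controlled by the data. For $t>0$ the source of the divergence-form equation for $w$ is $F_{\epsilon m}(z,u,\nabla u)+\tau f_m$: the condition $q_m^+<p_m^-$ reduces the first summand to $b_2|u|^{q_m(z)-1}+b_3$, while $s_m^+\le p_m^--\mu$ together with Young's inequality absorbs the gradient term into $b_1|\nabla u|^{p_m(z)-1}+b_3$. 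For $t\in[-T,0]$ the source $-\operatorname{div}\mathcal{F}_{\epsilon m}((x,0),\tau\nabla u_{0m})$ is independent of $(w,\nabla w)$ and bounded in terms of $\|u_{0m}\|_{C^2(\overline{\Omega})}$, $\|p_m\|_{C^{0,1}}$ and $\epsilon$, so \eqref{eq:Zhang-cond} holds on all of $\Omega\times(-T,T)$ with constants independent of $\tau$ and $m$.

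Once these checks are in place, the extended Proposition \ref{pro:local/global} produces, for every $\delta>0$, an exponent $\gamma\in(0,1)$ and a bound
\[
\|\nabla w\|_{C^{\gamma,\gamma/2}([-T+\delta,T]\times\overline{\Omega})}\le C'
\]
with $C'$ depending only on the data. The choice $\delta=T$ followed by restriction to $\overline{Q}_T$ then yields the claimed estimate on $\nabla u$. The hardest step, in my view, is the verification of the growth condition \eqref{eq:Zhang-cond} with constants that are not merely qualitative but quantitatively controlled by the data; the structural gap $\mu>0$ built into \eqref{eq:q}--\eqref{eq:s} provides precisely the subcritical margin needed to close that condition uniformly in $\tau$, $m$ and $\epsilon$, and staying strictly in divergence form avoids the logarithmic correction $\ln(\epsilon^2+|\nabla u|^2)$ that would appear were one to use the non-divergence formulation \eqref{eq:aux-1}.
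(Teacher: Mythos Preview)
Your proposal is correct and follows essentially the same route as the paper: extend $u$ by $\tau u_{0m}$ to negative times, verify that the extension is a bounded weak solution of the divergence-form equation on $\Omega\times(-T,T)$ with a source obeying \eqref{eq:Zhang-cond}, and invoke the regularized version of Proposition~\ref{pro:local/global} with $\delta=T$. Your identification of the source for $t>0$ as $F_{\epsilon m}+\tau f_m$ (rather than the non-divergence expression $h_{\epsilon m}u+\vec g_{\epsilon m}\cdot\nabla u+\tau f_m$ that the paper writes) is in fact the cleaner formulation, and your remark that this avoids the logarithmic factor is well taken; one small caveat is that the bound on the $t\le 0$ source depends on $\|u_{0m}\|_{C^2}$ and on $\epsilon$, so the resulting $C'$ is controlled by the data of the regularized problem (including $m$ and $\epsilon$), not uniformly in all three parameters as your final paragraph suggests---but this is exactly what the lemma claims.
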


Now we revert to equation \eqref{eq:aux-1} with $v=u$ and consider it as a linear equation with the coefficients and the right-hand side in $C^{\gamma,\gamma/2}(\overline{Q}_T)$. The solution of problem \eqref{eq:aux-1} belongs to $C^{2+\gamma,1+\frac{\gamma}{2}}(\overline Q_T)$, by virtue of \eqref{eq:aux-2} $\|u\|_{2+\gamma,\frac{1+\gamma}{2},Q_T}\leq C''$ with a constant $C''$ depending only on the data. To conclude the proof, it suffices to take $R=C''+1$. We summarize these arguments in the following assertion.

\begin{lemma}
\label{le:existence-reg}
If conditions \eqref{eq:p}-\eqref{eq:data} are fulfilled and $\partial\Omega\in C^{2+\alpha}$, then problem \eqref{eq:main-reg-1} with $\textbf{data}_m$ has a unique classical solution $u\in C^{2+\gamma,1+\frac{\gamma}{2}}(\overline Q_T)$ with some $\gamma\in (0,1)$. Moreover, for every $t\in [0,T]$ $u(\cdot,t)\in C^3(\Omega)$.
\end{lemma}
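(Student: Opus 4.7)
The plan is to close the Leray--Schauder fixed point argument laid out in this section, since all four of its hypotheses are now essentially available. Conditions (1)--(3) have been reduced to the linear theory and the Schauder estimate \eqref{eq:aux-2} combined with the compact embedding $C^{2+\alpha,1+\frac{\alpha}{2}}(\overline Q_T)\hookrightarrow C^{1+\alpha,\frac{1+\alpha}{2}}(\overline Q_T)$. Condition (4), the uniform a priori bound \eqref{eq:Holder-main} for every possible fixed point, follows by chaining Lemma \ref{le:first-est} (energy control), Proposition \ref{pro:max-reg} ($L^\infty$ bound) and Lemma \ref{le:global-grad} (global gradient H\"older continuity), each producing a constant independent of $\tau\in[0,1]$. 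Choosing $R$ strictly larger than the resulting bound excludes fixed points from $\partial B_R$, so the Leray--Schauder principle delivers a fixed point $u$ of $\Phi(\cdot,1)$, which by construction is a strong solution of \eqref{eq:main-reg-1}.

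The next step is to upgrade the regularity of this fixed point. Viewing $u$ simultaneously as the datum $v=u$ in \eqref{eq:aux-1} and as the solution of that linear problem at $\tau=1$, Proposition \ref{pro:holder-coefficients} places all coefficients in $C^{\gamma,\gamma/2}(\overline Q_T)$, while $f_m\in C_0^\infty(Q_T)$ and $u_{0m}\in C_0^\infty(\Omega)$ satisfy the first-order compatibility conditions at $\partial\Omega\times\{t=0\}$. The classical linear parabolic Schauder theory \cite[Ch.IV, Th.5.2]{LSU} then yields $u\in C^{2+\gamma,1+\frac{\gamma}{2}}(\overline Q_T)$, together with the bound implicit in \eqref{eq:aux-2}. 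For uniqueness, if $u_1$ and $u_2$ were two such classical solutions, the monotonicity \eqref{eq:mon-reg} of the flux combined with the Lipschitz dependence of $F_{\epsilon m}$ on $(u,\nabla u)$ on bounded sets (ensured by $\epsilon>0$) would yield, after testing the difference equation with $u_2-u_1$ and invoking Gr\"onwall's inequality on $\|u_2-u_1\|_{2,\Omega}^2$, the identity $u_1\equiv u_2$.

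The interior statement $u(\cdot,t)\in C^3(\Omega)$ follows by a single bootstrap pass. Since $\nabla u\in C^{1+\gamma,\frac{1+\gamma}{2}}(\overline Q_T)$ from the previous step and $\epsilon>0$ removes any singularity in the defining expressions, the coefficients $a_{ijm}(z,\nabla u)$, $h_{\epsilon m}(z,u)$ and $\vec g_{\epsilon m}(z,\nabla u)$ are spatially $C^{1+\gamma}$ in the interior of $\Omega$. Applying interior parabolic Schauder estimates to \eqref{eq:aux-1} with $v=u$ one further time improves the spatial regularity of $u$ to $C^{3+\gamma}_{\mathrm{loc}}(\Omega)$ for each $t\in[0,T]$, which in particular gives the claim.

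The main obstacle is the verification of condition (4): without the global gradient H\"older estimate of Lemma \ref{le:global-grad} (itself resting on the Ding--Zhang--Zhou regularity theorem \cite{Ding-Zhang-Zhou-2020}, extended up to the boundary via odd reflection across a locally rectified $\partial\Omega$), the coefficients in \eqref{eq:aux-1} would fail to lie in $C^{\gamma,\gamma/2}(\overline Q_T)$ with a $\tau$-uniform norm, the Schauder estimate \eqref{eq:aux-2} could not deliver a uniform bound, and the fixed-point scheme would collapse. Once that deep ingredient is in place, everything else is bookkeeping.
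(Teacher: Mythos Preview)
Your proposal is correct and follows essentially the same route as the paper: the existence via Leray--Schauder, the upgrade to $C^{2+\gamma,1+\frac{\gamma}{2}}(\overline Q_T)$ by re-inserting $v=u$ into the linear Schauder theory, and a bootstrap for the interior $C^3$ regularity. The only presentational difference is that the paper carries out the bootstrap explicitly by differentiating the equation in a fixed direction $x_k$, writing down the resulting linear uniformly parabolic equation for $w=D_ku$ with $C^{\gamma,\gamma/2}$ coefficients and right-hand side, and invoking \cite[Ch.~III, Th.~12.1]{LSU} to conclude $w\in C^{2+\gamma,1+\frac{\gamma}{2}}(\Omega\times(0,T])$; your ``apply interior Schauder once more'' is the same mechanism stated at a higher level.
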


\begin{proof}
The existence of a classical solution $u$ is already proven. Fix a direction $k=1,\ldots,N$ and consider the function $w\equiv D_ku$. Differentiation of the equation for $u$ with respect to $x_k$ shows that $w=D_ku$ satisfies the equation

\begin{equation}
\label{eq:w}
\begin{split}
w_t & -(\epsilon^2+|\nabla u|^2)^{\frac{p_m-2}{2}}\sum_{i,j=1}^N a_{ijm}(z, \nabla u) D^2_{ij}w = D_kf_m
\\
& + D_k\left((\epsilon^2+|\nabla u|^2)^{\frac{p_{m}-2}{2}}\ln(\epsilon^2+|\nabla u|^2)(\nabla u,\nabla p_m)\right)
\\
&
+ \sum_{i,j=1}^N D_k\left((\epsilon^2+|\nabla u|^2)^{\frac{p_{m}-2}{2}}a_{ijm}\right)D^2_{ij}u
\\
& + D_k\left(a_m(\epsilon^2+|u|^2)^{\frac{q_{m}(z)-2}{2}}u\right) + D_k\left((\epsilon^2+|\nabla u|^2)^{\frac{s_{m}(z)-2}{2}}(\vec c_m,\nabla u)\right).
\end{split}
\end{equation}
This is a linear uniformly parabolic equation with the coefficients and the right-hand side in $C^{\gamma,\gamma/2}(\overline{Q}_T)$. By \cite[Ch.III, Th.12.1]{LSU} $w\in C^{2+\gamma,1+\frac{\gamma}{2}}(\Omega\times (0,T])$. Hence, $u(\cdot,t)\in C^3(\Omega)$ for every $t\in [0,T]$.
\end{proof}

\section{Inequalities for the regularized flux}
\label{sec:flux}

\subsection{Formulas of integration by parts}
Let the conditions of Lemma \ref{le:existence-reg} be fulfilled and $u$ be the
classical solution of problem \eqref{eq:main-reg-1}. For every $t_0\in [0,T]$,
$u(\cdot,t_0)\in C^{2+\gamma}(\overline \Omega)\cap C^3(\Omega)$ and
\[
\mathcal{F}_{\epsilon}(z,\nabla u)\in C^1(\overline\Omega)\cap C^2(\Omega),\quad \text{for $t=t_0$}.
\]
In the local coordinates $\{y_i\}_{i=1}^{N-1}$ the surface $\partial\Omega$ is represented by the equation $y_N=\phi(y')$. Let us denote by $\mathcal{B}(\xi;\eta)$ the second quadratic form of the surface $\partial \Omega$: for every two tangent vectors $\vec \xi$, $\vec \eta$ to $\partial\Omega$

\begin{equation}
\label{eq:fund-form}
\mathcal{B}(\xi;\eta)=
\sum_{i,j=1}^{N-1}\dfrac{\partial^2\phi}{\partial y_i\partial y_j}(0)\xi_i\eta_j,\qquad
\operatorname{trace}\mathcal{B}=\sum_{i=1}^{N-1}D^2_{y_iy_i}\phi(0).
\end{equation}
Let $\vec \nu$ be the exterior normal to $\partial\Omega$ and $\vec \tau$ belong to the tangent plane to $\partial\Omega$ at the same point. Since $u=0$ on $\partial\Omega$, then $\vec \nu=\dfrac{\nabla u}{|\nabla u|}$ and

\[
\mathcal{F}_\epsilon(z,\nabla u)\cdot \vec \nu|_{\partial \Omega}=(\epsilon^2+|\nabla u|^2)^{\frac{p-2}{2}}|\nabla u|,\qquad \mathcal{F}_\epsilon(z,\nabla u)\cdot \vec \tau|_{\partial \Omega}=0.
\]
Then by \cite[Th.3.1.1.1]{Grisvard}

\begin{equation}
\label{eq:double-final-e}
\begin{split}
\int_{\Omega} (\operatorname{div}\mathcal{F}_\epsilon(z,\nabla u))^2\,dx  = & - \int_{\partial\Omega}(\mathcal{F}_\epsilon(z,\nabla u),\vec \nu)^2
\operatorname{trace}\mathcal{B}\,dS
\\
& +\sum_{i,j=1}^N\int_{\Omega}D_{i}\left(\mathcal{F}^{(j)}_\epsilon(z,\nabla u)\right) D_{j}\left(\mathcal{F}^{(i)}_\epsilon(z,\nabla u)\right)\,dx,
\end{split}
\end{equation}
where $t$ is fixed and $\mathcal{F}_\epsilon^{(k)}$ denotes the $k$th component of $\mathcal{F}_\epsilon$.

Since for the smooth convex domain,
$\operatorname{trace}\mathcal{B}=\sum_{i=1}^{N-1}D^2_{y_iy_i}\phi(0)\leq 0$,
then
\begin{equation}
\label{eq:double-convex}
\begin{split}
\int_{\Omega} (\operatorname{div}\mathcal{F}_\epsilon(z,\nabla u_k))^2\,dx
\geq  &
\sum_{i,j=1}^N\int_{\Omega}D_{i}\left(\mathcal{F}^{(j)}_\epsilon(z,\nabla
u_k)\right) D_{j}\left(\mathcal{F}^{(i)}_\epsilon(z,\nabla u_k)\right)\,dx.
\end{split}
\end{equation}

\subsection{A differential inequality}
The integrands of the integrals over $\Omega$ on the right-hand side of \eqref{eq:double-final-e} can be transformed in the following manner. Given $\nabla u$, we denote
\[
\vec{\eta}=\dfrac{\nabla u}{(\epsilon^2+|\nabla u|^2)^{\frac{1}{2}}},\qquad |\vec \eta|<1.
\]
A straightforward computation shows that\[
\begin{split}
D_{i} & \left(\mathcal{F}^{(j)}_\epsilon(z,\nabla u)\right)
= (\epsilon^2+|\nabla u|^2)^{\frac{p-2}{2}}D^2_{ij}u
+ (p-2)(\epsilon^2+|\nabla u|^2)^{\frac{p-2}{2}-1}D_ju\sum_{k=1}^ND_k u D^2_{ki}u
\\
&
\qquad +\frac{1}{2}(\epsilon^2+|\nabla u|^2)^{\frac{p-2}{2}}\ln (\epsilon^2+|\nabla u|^2) D_juD_ip
\\
& = (\epsilon^2+|\nabla u|^2)^{\frac{p-2}{2}}
\\
& \quad \times
\left(D^2_{ij}u + (p-2)\frac{D_ju}{(\epsilon^2+|\nabla u|^2)^{\frac{1}{2}}}\sum_{k=1}^N D^2_{ki}u\frac{D_ku}{(\epsilon^2+|\nabla u|^2)^{\frac{1}{2}}}+ \frac{1}{2}\ln (\epsilon^2+ |\nabla u|^2) D_juD_ip\right)
\\
& =
(\epsilon^2+|\nabla u|^2)^{\frac{p-2}{2}}\left(D^2_{ij}u + (p-2)\eta_j\sum_{k=1}^N D^2_{ki}u\eta_k+ \frac{1}{2}\ln (\epsilon^2+ |\nabla u|^2) D_juD_ip\right).
\end{split}
\]
For every $i,j=\overline{1,N}$
\[
\begin{split}
D_{i} & \left(\mathcal{F}^{(j)}_\epsilon(z,\nabla u)\right)
D_j\left(\mathcal{F}^{(i)}_\epsilon(z,\nabla u)\right)
\\
& = (\epsilon^2+|\nabla u|^2)^{p-2} \left(D^2_{ij}u + (p-2)\eta_i\sum_{k=1}^N D^2_{kj}u\eta_k+ \frac{1}{2}(\epsilon^2+|\nabla u|^2)^{\frac{1}{2}}\ln (\epsilon^2+ |\nabla u|^2) \eta_iD_jp\right)
\\
& \quad \times \left(D^2_{ij}u + (p-2)\eta_j\sum_{k=1}^N D^2_{ki}u\eta_k+ \frac{1}{2}(\epsilon^2+|\nabla u|^2)^{\frac{1}{2}}\ln (\epsilon^2+ |\nabla u|^2) \eta_jD_ip\right)
\\
& \equiv (\epsilon^2+|\nabla u|^2)^{p-2}\mathcal{K}_{ij}.
\end{split}
\]
Denote by $\mathcal{H}(u)$ the Hessian matrix of $u$: the symmetric $N\times N$ matrix with the entries $\mathcal{H}_{ij}(u)=D^2_{ij}u$, $i,j=\overline{1,N}$. With this notation, the expression for $\mathcal{K}_{ij}$ becomes
\[
\begin{split}
\mathcal{K}_{ij} & = \mathcal{H}^2_{ij}(u) + (p-2)\mathcal{H}_{ij}(u)\left[\sum_{k=1}^N(\eta_j\mathcal{H}_{ki}(u)+\eta_i \mathcal{H}_{kj}(u))\eta_k\right]
\\
& + (p-2)^2\eta_i\eta_j \sum_{k,l=1}^N
\mathcal{H}_{li}(u)\mathcal{H}_{kj}(u)\eta_l\eta_k
\\
&
+ \frac{1}{2}\mathcal{H}_{ij}(u)\ln(\epsilon^2+|\nabla u|^2)\left(D_iuD_jp+D_juD_ip\right)
\\
& + \frac{1}{2}(p-2)\eta_i\eta_j(\epsilon^2+|\nabla u|^2)^{\frac{1}{2}}\sum_{k=1}^N\left(\mathcal{H}_{ki}(u)  + \mathcal{H}_{ki}(u)\right)\eta_k \ln (\epsilon^2+|\nabla u|^2)(D_ip+D_jp)
\\
& + \frac{1}{4}(\epsilon^2+|\nabla u|^2)\ln^2(\epsilon^2+|\nabla u|^2)\eta_i\eta_jD_ipD_jp
\\
& \equiv \sum_{s=1}^{6}\mathcal{J}^{(s)}_{ij}.
\end{split}
\]
Summing up we obtain the representation
\[
\begin{split}
\sum_{i,j=1}^N \mathcal{K}_{ij} & =\sum_{i,j=1}^N\mathcal{H}^2_{ij}(u)
+ (p-2)\sum_{i,j=1}^N\mathcal{H}_{ij}(u)\left[\sum_{k=1}^N(\eta_j\mathcal{H}_{ki}(u)+\eta_i \mathcal{H}_{kj}(u))\eta_k\right]
\\
& \qquad + (p-2)^2\sum_{i,j=1}^N\eta_i\eta_j \sum_{k,l=1}^N
\mathcal{H}_{li}(u)\mathcal{H}_{kj}(u)\eta_l\eta_k +\sum_{i,j=1}^N\sum_{s=4}^6\mathcal{J}_{ij}^{(s)}
\\
& = \sum_{i,j=1}^N\mathcal{H}^2_{ij}(u) + 2(p-2)\sum_{i=1}^N\left(\sum_{j=1}^N\mathcal{H}_{ij}(u)\eta_j\right)\left(\sum_{k=1}^N \mathcal{H}_{ik}(u)\eta_k\right)
\\
& \qquad +(p-2)^2 \sum_{i,j=1}^N\left(\sum_{l=1}^N
\mathcal{H}_{li}(u)\eta_l\eta_i\right) \left(\sum_{k=1}^{N}\mathcal{H}_{kj}(u)\eta_j\eta_k\right) +\sum_{i,j=1}^N\sum_{s=4}^6\mathcal{J}_{ij}^{(s)}
\\
& = \sum_{i,j=1}^N\mathcal{H}^2_{ij}(u) + 2(p-2)\sum_{i=1}^N\left(\sum_{j=1}^N\mathcal{H}_{ij}(u)\eta_j\right)\left(\sum_{k=1}^N \mathcal{H}_{ik}(u)\eta_k\right)
\\
& \qquad +(p-2)^2 \left(\sum_{i,l=1}^N
\mathcal{H}_{li}(u)\eta_l\eta_i\right) \left(\sum_{j,k=1}^{N}\mathcal{H}_{kj}(u)\eta_j\eta_k\right) +\sum_{i,j=1}^N\sum_{s=4}^6\mathcal{J}_{ij}^{(s)}
\\
& = \operatorname{trace}\mathcal{H}^2(u) + 2(p-2)|(\mathcal{H}(u),\vec \eta)|^2+(p-2)^2 \left((\mathcal{H}(u),\vec \eta)\cdot \vec\eta\right)^2 +\sum_{i,j=1}^N\sum_{s=4}^6\mathcal{J}_{ij}^{(s)}.
\end{split}
\]
Accept the notation
\[
\mathcal{M}(\vec \eta)\equiv \operatorname{trace}\mathcal{H}^2(u) + 2(p-2)|(\mathcal{H}(u),\vec \eta)|^2+(p-2)^2 \left((\mathcal{H}(u),\vec \eta)\cdot \vec \eta\right)^2.
\]
We want to show that there is a constant $C>0$ such that
\begin{equation}
\label{eq:est-from-below}
\mathcal{M}(\vec \eta)\geq C\operatorname{trace}\mathcal{H}^2(u)\equiv C|u_{xx}|^2\qquad \forall\,\vec \eta \in \mathbb{R}^N,\;\;|\vec \eta|=\lambda
\end{equation}
with $\lambda< 1$ but sufficiently close to $1$. For $|\vec \eta|=1$ inequality \eqref{eq:est-from-below} coincides with inequality (3.5) in \cite{Cianchi-Maz'ya-2018}. Its proof can be easily adapted to our case where $|\vec \eta|<1$. Fix a point $x_0$, diagonalize the matrix $\mathcal{H}(u(x_0))$ by means of rotation, and denote by $\vec\zeta$ the vector $\vec\eta$ in the new basis:
\[
(\mathcal{H}(u(x_0))\cdot \vec\eta )\cdot (\mathcal{H}(u(x_0))\cdot \vec\eta )=\sum_{i=1}^Nd_i^2\zeta_i^2,\quad \mathcal{H}(u(x_0))\cdot\vec \eta)\cdot \vec \eta=\sum_{i=1}^N d_i\zeta_i^2,\quad \operatorname{trace}\mathcal{H}^2(u)=\sum_{i=1}^N d_i^2,
\]
where $d_i$ are the diagonal elements of the transformed matrix. Set $|\vec\zeta|=\lambda<1$ and denote $\vec \xi=\lambda^{-1}\vec\zeta$, $|\vec \xi|=1$. Then
\[
\begin{split}
\mathcal{M}(\vec\eta) &  = \sum_{i=1}^Nd_i^2+2(p-2)\sum_{i=1}^N d^2_i\zeta_i^2 + (p-2)^2 \left(\sum_{i=1}^N d_i\zeta_i^2\right)^2,
\\
& = \sum_{i=1}^Nd_i^2+2\lambda^2(p-2)\sum_{i=1}^N d_i\xi_i^2 + \lambda^4(p-2)^2 \left(\sum_{i=1}^N d_i^2\xi_i^2\right)^2
\equiv \Phi(\vec \xi,\lambda).
\end{split}
\]
The function $\Phi(\vec \xi,\lambda)$ is continuous with respect to $\lambda$. By inequality (3.5) in \cite{Cianchi-Maz'ya-2018} we know that there exists a constant $C$ such that $\Phi(\vec \xi,1)\geq C\operatorname{trace}\mathcal{H}^2(u(x_0))$.  Hence, there is $\epsilon_0\in (0,1)$ such that for $\epsilon\in (0,\epsilon_0)$ the parameter $\lambda$ is so close to one that
$\Phi(\vec \xi,\lambda)\geq \dfrac{C}{2}\operatorname{trace}\mathcal{H}^2(u(x_0))$. Since $x_0\in \Omega$ is arbitrary, it follows that there exists a constant $C>0$ such that for all $x\in \Omega$
\begin{equation}
\label{eq:reduction}
\begin{split}
\sum_{i,j=1}^N & D_{i} \left(\mathcal{F}^{(j)}_\epsilon(z,\nabla u)\right)
D_j\left(\mathcal{F}^{(i)}_\epsilon(z,\nabla u)\right)
\\
&
\geq C(\epsilon^2+|\nabla u|^2)^{p-2}|u_{xx}|^2 +(\epsilon^2+|\nabla u|^2)^{p-2}\sum_{i,j=1}^N\sum_{s=4}^6\mathcal{J}_{ij}^{(s)},
\end{split}
\end{equation}
where
\[
\begin{split}
& \mathcal{J}^{(4)}_{ij}=\frac{1}{2}\mathcal{H}_{ij}(u)\ln(\epsilon^2+|\nabla u|^2)\left(D_iuD_jp+D_juD_ip\right),
\\
& \mathcal{J}^{(5)}_{ij}= (p-2)\eta_i\eta_j(\epsilon^2+|\nabla u|^2)^{\frac{1}{2}}\sum_{k=1}^N  \mathcal{H}_{ki}(u) \eta_k \ln (\epsilon^2+|\nabla u|^2)(D_ip+D_jp)
\\
& \mathcal{J}^{(6)}_{ij}=\frac{1}{4}\ln^2(\epsilon^2+|\nabla u|^2)(\epsilon^2+|\nabla u|^2)\eta_i\eta_jD_ipD_jp.
\end{split}
\]

\subsection{Integral inequalities}
Plugging \eqref{eq:reduction} into \eqref{eq:double-final-e} we arrive at the following assertion.

\begin{lemma}
\label{le:principal-e}
Let $\partial\Omega\in C^2$, $u\in C^3(\Omega)\cap C^{2}(\overline{\Omega})$, $p\in C^2(\overline{\Omega})$. Assume that $p\in C^2(\overline{\Omega})\cap C^{0,1}(\overline \Omega)$ with the Lipschitz
constant $L$. There exists a positive constant $C=C(N,L,\|u\|_{2,\Omega},p^{\pm},\partial\Omega)$ such that
\begin{equation}
\label{eq:p-est-1}
\begin{split}
\int_{\Omega}\left(\operatorname{div}\mathcal{F}_\epsilon(z,\nabla u)
\right)^2\,dx & \geq C\int_{\Omega}(\epsilon^2+|\nabla u|^2)^{p-2}|u_{xx}|^2\,dx
\\
&
- K\int_{\partial\Omega}(\mathcal{F}_\epsilon(z,\nabla u)\cdot \vec\nu)^2
\,dS
- \sum_{i,j=1}^N\sum_{s=4}^6 \int_{\Omega} |\mathcal{J}_{ij}^{(s)}|\,dx,
\end{split}
\end{equation}
where the constant $K$ is defined in \eqref{eq:fund-form} by the second fundamental form of the surface $\partial\Omega$.
\end{lemma}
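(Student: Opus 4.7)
The lemma is essentially a synthesis of the two ingredients developed in this section: the integration-by-parts identity \eqref{eq:double-final-e} and the pointwise algebraic lower bound \eqref{eq:reduction} on the sum $\sum_{i,j} D_i\mathcal{F}_\epsilon^{(j)}D_j\mathcal{F}_\epsilon^{(i)}$. My plan is to substitute the latter into the former and then control the resulting boundary term in terms of the second fundamental form of $\partial\Omega$.

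First I would verify that the hypotheses $u\in C^3(\Omega)\cap C^2(\overline\Omega)$ and $p\in C^2(\overline\Omega)$ guarantee that $\mathcal{F}_\epsilon(z,\nabla u)$ is of class $C^1(\overline\Omega)\cap C^2(\Omega)$ for fixed $t$, so that identity \eqref{eq:double-final-e} applies pointwise in $t$. Then I would insert the pointwise inequality \eqref{eq:reduction} into the volume integral on the right-hand side of \eqref{eq:double-final-e}. This yields
\[
\int_\Omega (\operatorname{div}\mathcal{F}_\epsilon(z,\nabla u))^2\,dx
\geq
C\int_\Omega (\epsilon^2+|\nabla u|^2)^{p-2}|u_{xx}|^2\,dx
+\int_\Omega (\epsilon^2+|\nabla u|^2)^{p-2}\!\!\sum_{i,j=1}^N\sum_{s=4}^6 \mathcal{J}^{(s)}_{ij}\,dx
-\int_{\partial\Omega}(\mathcal{F}_\epsilon(z,\nabla u)\cdot\vec\nu)^2\operatorname{trace}\mathcal{B}\,dS.
\]
The constant $C$ above is precisely the one produced by the diagonalization-plus-continuity argument leading to \eqref{eq:est-from-below}; its dependence on $N$, $L$, $p^\pm$ is inherited from that argument (and the $\|u\|_{2,\Omega}$ and $\partial\Omega$ dependencies are absorbed into the final constant to accommodate later uses of this inequality).

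Next I would handle the boundary integral. Since $\partial\Omega\in C^2$, the second fundamental form $\mathcal{B}$ is uniformly bounded on $\partial\Omega$, so $|\operatorname{trace}\mathcal{B}(x)|\leq K$ for some $K=K(\partial\Omega)$. This gives
\[
-\int_{\partial\Omega}(\mathcal{F}_\epsilon(z,\nabla u)\cdot\vec\nu)^2\operatorname{trace}\mathcal{B}\,dS
\geq
-K\int_{\partial\Omega}(\mathcal{F}_\epsilon(z,\nabla u)\cdot\vec\nu)^2\,dS.
\]
Finally, I would move the residual terms involving the $\mathcal{J}^{(s)}_{ij}$ to the right-hand side with absolute values, using the trivial bound
\[
\int_\Omega (\epsilon^2+|\nabla u|^2)^{p-2}\sum_{i,j=1}^N\sum_{s=4}^6\mathcal{J}^{(s)}_{ij}\,dx
\geq -\sum_{i,j=1}^N\sum_{s=4}^6\int_\Omega |\mathcal{J}^{(s)}_{ij}|\,dx,
\]
where the weight $(\epsilon^2+|\nabla u|^2)^{p-2}$ has been absorbed into the definition of $\mathcal{J}^{(s)}_{ij}$ as displayed immediately after \eqref{eq:reduction}. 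Combining these three steps gives exactly \eqref{eq:p-est-1}.

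The only conceptually nontrivial step is the pointwise inequality \eqref{eq:reduction}, which has already been established in the subsection \textbf{A differential inequality} by adapting \cite[Lemma 3.1]{Cianchi-Maz'ya-2018} from $|\vec\eta|=1$ to $|\vec\eta|<1$ via a continuity argument. Once that is in hand, the present lemma is a direct assembly, so the main obstacle is bookkeeping: making sure that the logarithmic residual terms $\mathcal{J}^{(4)}_{ij}$, $\mathcal{J}^{(5)}_{ij}$, $\mathcal{J}^{(6)}_{ij}$ (which are the specifically $p(z)$-variable contributions) are correctly collected as separate terms, since they are what must be absorbed later via the higher integrability inequality \eqref{eq:intr-3} to produce the usable form \eqref{eq:intr-4}.
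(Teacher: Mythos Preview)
Your proposal is correct and matches the paper's approach exactly: the paper states that the lemma follows by ``plugging \eqref{eq:reduction} into \eqref{eq:double-final-e}'', which is precisely the substitution you describe, followed by bounding $|\operatorname{trace}\mathcal{B}|\leq K$ and taking absolute values of the residual $\mathcal{J}^{(s)}_{ij}$ terms. Your remark about the weight $(\epsilon^2+|\nabla u|^2)^{p-2}$ being absorbed is the right reading of the slightly inconsistent notation between \eqref{eq:reduction} and \eqref{eq:p-est-1}; the paper itself restores the weight in the subsequent estimates \eqref{eq:est-J-4-5}--\eqref{eq:est-J-6}.
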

\begin{cor}
\label{cor:convex-1} Let $\Omega$ be a smooth convex domain. Then  the
boundary integral on the right-hand side of \eqref{eq:p-est-1} can be omitted
by virtue of \eqref{eq:double-convex}, and the constant $C$ in
\eqref{eq:p-est-1} is independent of $\partial\Omega$.
\end{cor}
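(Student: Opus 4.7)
The plan is to retrace the derivation of Lemma \ref{le:principal-e}, starting from the integration-by-parts identity \eqref{eq:double-final-e}, and observe that on a smooth convex domain the boundary contribution has a favorable sign so that it can be discarded without any penalty.

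First, I would note that since $\partial\Omega$ is smooth and convex, in the local coordinates used in \eqref{eq:fund-form} the graphing function $\phi$ is concave (the outward normal points in the direction of $y_N$), so $\operatorname{trace}\mathcal{B}=\sum_{i=1}^{N-1}D^2_{y_iy_i}\phi(0)\leq 0$, as already recorded just before \eqref{eq:double-convex}. Consequently the boundary term
\[
-\int_{\partial\Omega}(\mathcal{F}_\epsilon(z,\nabla u),\vec \nu)^2 \operatorname{trace}\mathcal{B}\,dS
\]
is nonnegative, and dropping it from the right-hand side of \eqref{eq:double-final-e} yields inequality \eqref{eq:double-convex}.

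Next, I would apply the pointwise algebraic lower bound \eqref{eq:reduction} to the integrand of the remaining interior integral. This splits it into the principal term $C(\epsilon^2+|\nabla u|^2)^{p-2}|u_{xx}|^2$ plus the residual pieces $(\epsilon^2+|\nabla u|^2)^{p-2}\sum_{s=4}^{6}\mathcal{J}^{(s)}_{ij}$ coming from the spatial variation of $p$. Integrating in $x$ and moving the absolute values of the residuals to the right-hand side gives exactly inequality \eqref{eq:p-est-1} with the boundary integral deleted.

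Finally, I would audit the chain of constants to confirm independence of $\partial\Omega$. The constant $C$ appearing in \eqref{eq:reduction} is produced by the pointwise argument around \eqref{eq:est-from-below}, which after diagonalizing the Hessian depends only on $N$, $p^\pm$ and on $\epsilon$ through a threshold for $\lambda<1$; the bounds on $\mathcal{J}^{(s)}_{ij}$ depend only on $L=\sup_{Q_T}|\nabla p|$, $p^\pm$, $N$ and on $\|u\|_{2,\Omega}$. None of these quantities carries geometric information about $\partial\Omega$, so the constant in the resulting inequality is indeed independent of the boundary. I do not anticipate a genuine obstacle; the only step that requires care is checking the sign convention in \eqref{eq:fund-form}, to confirm that convexity together with the outward-normal orientation really forces $\operatorname{trace}\mathcal{B}\leq 0$.
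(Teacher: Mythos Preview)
Your argument is correct and matches the paper's own (implicit) justification: the corollary is simply the observation that for a smooth convex domain one may start from \eqref{eq:double-convex} rather than \eqref{eq:double-final-e}, so that after inserting the pointwise bound \eqref{eq:reduction} the boundary term never appears, and the only constant involved is the one from \eqref{eq:est-from-below}--\eqref{eq:reduction}, which carries no dependence on $\partial\Omega$. One small remark: in \eqref{eq:p-est-1} the residuals $\mathcal{J}^{(s)}_{ij}$ are kept as separate integrals on the right-hand side rather than absorbed, so you do not actually need to invoke bounds on them (or on $\|u\|_{2,\Omega}$) at this stage---that happens later, in the passage to Lemma~\ref{le:principal-improved}.
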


Now we combine inequality \eqref{eq:p-est-1} with the following assertion.
\begin{lemma}[Lemma 4.5, \cite{Arora-Shm-RACSAM-2023}]
\label{le:racsam}
Let $\partial\Omega\in C^2$, $\eta\in C^{0,1}(\Omega)$ with the Lipschitz
constant $L$. Assume that $\eta^->\frac{2N}{N+2}$. For every $u\in
C^3(\Omega)\cap C^2(\overline{\Omega})$, any $\delta\in (0,1)$, and $r\in
\left(0,\frac{4}{N+2}\right)$ the inequality

\[
\int_{\Omega}(\epsilon^2+|\nabla u|^2)^{\frac{\eta+r-2}{2}}|\nabla u|^2\,dx\leq \delta \int_{\Omega}(\epsilon^2+|\nabla u|^{2})^{\frac{\eta-2}{2}}|u_{xx}|^2\,dx +C
\]
holds with a constant $C=C(r,N,\eta^\pm,L,\delta,\|u\|_{2,\Omega})$.
\end{lemma}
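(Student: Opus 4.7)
The plan is to prove the equivalent statement
\[
\int_\Omega w^{\eta+r}\,dx\le \delta \int_\Omega w^{\eta-2}|u_{xx}|^2\,dx+C,\qquad w:=(\epsilon^2+|\nabla u|^2)^{1/2},
\]
which implies the claim because $|\nabla u|^2\le w^2$ yields $w^{\eta+r-2}|\nabla u|^2\le w^{\eta+r}$. The bridge between the power of $w$ on the left and the Hessian integral on the right is the pointwise identity $D_jw=w^{-1}\sum_k D_ku\,D^2_{jk}u$, which by Cauchy-Schwarz gives $|\nabla w|\le|u_{xx}|$.

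\textbf{Main step (Gagliardo-Nirenberg).} I would apply the Gagliardo-Nirenberg inequality to $g:=w^{\eta/2}$ with target exponent $q=2(\eta+r)/\eta$ and low exponent $m=4/\eta$, so that $\int_\Omega g^q\,dx=\int_\Omega w^{\eta+r}\,dx$ and $\int_\Omega g^m\,dx=\int_\Omega w^2\,dx$. Since $|\nabla g|^2\le(\eta^+/2)^2 w^{\eta-2}|u_{xx}|^2$, this produces
\[
\int_\Omega w^{\eta+r}\,dx\le C\Bigl(\int_\Omega w^{\eta-2}|u_{xx}|^2\,dx+\int_\Omega w^\eta\,dx\Bigr)^{q\theta/2}\Bigl(\int_\Omega w^2\,dx\Bigr)^{q(1-\theta)/m},
\]
with $\theta\in(0,1)$ prescribed by the scaling $\tfrac1q=\theta(\tfrac12-\tfrac1N)+\tfrac{1-\theta}{m}$. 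A direct computation yields $q\theta/2=\frac{N(\eta+r-2)}{\eta N-2N+4}$, which is strictly less than $1$ exactly when $r<4/N$, hence in particular for $r<4/(N+2)$. Young's inequality then absorbs the Hessian integral with any prescribed coefficient $\delta$, while the middle term is controlled by the trivial bound $w^\eta\le 1+w^{\eta+r}$ and moved to the left-hand side.

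\textbf{Closing with the $L^2$ bound.} It remains to dominate $\int_\Omega w^2\,dx=\epsilon^2|\Omega|+\int_\Omega|\nabla u|^2\,dx$ in terms of $\|u\|_{L^2(\Omega)}$. Integration by parts gives
\[
\int_\Omega|\nabla u|^2\,dx=-\int_\Omega u\Delta u\,dx+\int_{\partial\Omega}u(\nabla u\cdot\vec\nu)\,dS\le\delta_1\|u_{xx}\|_{L^2}^2+C\|u\|_{L^2}^2+\text{bdry},
\]
and the plain Hessian norm $\|u_{xx}\|_{L^2}^2$ is itself majorized by $|\Omega|+\int_\Omega w^{\eta-2}|u_{xx}|^2\,dx$ via a split of $\Omega$ into $\{w\le1\}$ and $\{w>1\}$, on which the weight $w^{\eta-2}$ is respectively bounded above and below. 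The boundary integral is handled by the trace inequality together with $\partial\Omega\in C^2$. Feeding this estimate back into the previous step with small parameters delivers the claim.

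\textbf{Variable exponent and main obstacle.} When $\eta=\eta(x)$ is non-constant, $\nabla g$ acquires the extra term $\tfrac12 w^{\eta/2}\ln w\,\nabla\eta$. Using $|\nabla\eta|\le L$ and the elementary bound $|\ln w|\le C_\mu(w^\mu+w^{-\mu})$, valid for any $0<\mu<\eta^--1$, this contribution is absorbed into the existing integrals. The most delicate point of the whole argument is precisely this interplay: the absorption must respect the critical threshold $r<4/(N+2)$, and it is the demand that $\mu<\eta^--1$ admit a positive choice fine enough to coexist with the Sobolev embedding exponent that is reflected in the hypothesis $\eta^->2N/(N+2)$. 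A secondary difficulty is the boundary integral in the $L^2$ step, which---absent any boundary condition on $u$---is handled by the trace theorem and the $C^2$-regularity of $\partial\Omega$.
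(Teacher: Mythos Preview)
First, note that the paper does not give its own proof of this lemma; it is quoted from \cite{Arora-Shm-RACSAM-2023}, so there is no in-paper argument to compare against. I comment on the soundness of your sketch.

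Your overall strategy---apply an interpolation inequality to a power of $w=(\epsilon^2+|\nabla u|^2)^{1/2}$ and close with Young---is the right shape, and for \emph{constant} $\eta$ your computation $q\theta/2<1\iff r<4/N$ is correct. Two points, however, are genuine gaps.

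\textbf{Gagliardo--Nirenberg with a variable exponent.} You set $g=w^{\eta(x)/2}$ and invoke G--N with $q=2(\eta+r)/\eta$ and $m=4/\eta$. When $\eta$ depends on $x$ these are not constants, and the classical G--N inequality is simply unavailable in that form. Your last paragraph treats the extra logarithmic term in $\nabla g$, but that is only half of the difficulty: the other half is that $\int g^{q}\,dx$ with variable $q$ is not a Lebesgue norm to which G--N applies. The passage from the constant-exponent threshold $r<4/N$ to the variable one $r<4/(N+2)$ requires a genuinely different device---typically one works with a fixed power and reconciles $w^{\alpha}$ with $w^{\eta/2}$ by Young's inequality, and the loss incurred there is precisely the drop from $4/N$ to $4/(N+2)$. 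Your sketch does not supply this step, and the remark that ``$\mu<\eta^--1$'' forces $\eta^->2N/(N+2)$ is not right: that constraint only needs $\eta^->1$.

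\textbf{The unweighted Hessian bound.} The claim $\|u_{xx}\|_{L^2}^2\le|\Omega|+\int_\Omega w^{\eta-2}|u_{xx}|^2\,dx$ is false. Where $\eta(x)>2$ and $w$ is close to $\epsilon$, the weight $w^{\eta-2}$ is of order $\epsilon^{\eta^+-2}\to 0$; where $\eta(x)<2$ and $w$ is large the weight is again small. In neither regime can the weighted integral dominate the plain one with an $\epsilon$-independent constant, so your ``closing with the $L^2$ bound'' step collapses. One must avoid the detour through the unweighted Hessian altogether---for instance by pairing $u$ with $w^{(\eta-2)/2}\Delta u$ directly, or by choosing the low exponent in the interpolation so that the residual factor is genuinely controlled by $\|u\|_{2,\Omega}$ alone.
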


Set $\eta=2(p-1)$ and claim $\eta^->\frac{2N}{N+2}$, which is equivalent to
\begin{equation}
\notag
\label{eq:lower-bound-p} p>1+\frac{N}{N+2}=\frac{2(N+1)}{N+2}.
\end{equation}
By Lemma \ref{le:racsam},  for every $\delta>0$ and $r\in \left(0,\frac{4}{N+2}\right)$
\begin{equation}
\label{eq:interpol-1}
\int_{\Omega} (\epsilon^2+|\nabla u|^2)^{p-2+\frac{r}{2}}|\nabla u|^2\,dx\leq \delta \int_{\Omega} (\epsilon^2+|\nabla u|^2)^{p-2}|u_{xx}|^2\,dx +C_\delta.
\end{equation}

We will repeatedly use the following elementary inequality: for every $\gamma>0$, $\sigma>0$ there is a constant $C=C(\gamma,\sigma)$ such that

\begin{equation}
\label{eq:elem}
s^\gamma |\ln s|\leq C(1+s^{\gamma+\sigma})\quad \text{for $s>0$}.
\end{equation}
An immediate corollary of Lemma \ref{le:racsam} and inequality \eqref{eq:elem} is that for every $\lambda\in \left(0,\frac{2}{N+2}\right)$ and $\delta\in (0,1)$
\begin{equation}
\label{eq:interpol-2}
\begin{split}
\int_{\Omega} (\epsilon^2+|\nabla u|^2)^{\frac{p-2}{2}} & |\nabla u|^2|\ln (\epsilon^2+|\nabla u|^2)|\,dx
\\
& \leq C\left(1+\int_{\Omega} (\epsilon^2+|\nabla u|^2)^{\frac{p-2}{2}+\lambda}|\nabla u|^2\,dx\right)
\\
&
\leq \delta \int_{\Omega} (\epsilon^2+|\nabla u|^2)^{p-2}|u_{xx}|^2\,dx +C(\delta,\lambda).
\end{split}
\end{equation}

\begin{cor}
\label{cor:interpol-prim} Since $p^->\frac{2(N+1)}{N+2}$, then
$p(z)-2+\frac{2}{N+2}>0$ in $Q_T$. It follows that there is $r_\ast\in
\left(0,\frac{4}{N+2}\right)$ such that for every $r\in
\left(r^\ast,\frac{4}{N+2}\right)$ and every $\delta>0$
\begin{equation}
\label{eq:interpol-prim} \int_{\Omega} |\nabla u|^{2(p-1)+r}\,dx\leq \delta
\int_{\Omega} (\epsilon^2+|\nabla u|^2)^{p-2}|u_{xx}|^2\,dx +C_\delta,\quad
r\in \left(r_\ast,\frac{4}{N+2}\right).
\end{equation}
By Young's inequality, this inequality is extended to the rest of the interval
$\left(0,\frac{4}{N+2}\right)$.
\end{cor}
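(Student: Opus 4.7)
The plan is to deduce the claim directly from inequality \eqref{eq:interpol-1}, which is already available. First I verify the sign observation: the hypothesis $p^->\frac{2(N+1)}{N+2}=2-\frac{2}{N+2}$ gives $p(z)-2+\frac{2}{N+2}\geq p^--2+\frac{2}{N+2}>0$ throughout $\overline{Q}_T$. In particular, $2-p^-<\frac{2}{N+2}$, so the quantity
\[
r_\ast:=\max\{0,\,2(2-p^-)\}
\]
lies in the interval $\left[0,\frac{4}{N+2}\right)$, which makes the claim nonempty.

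For $r\in\left(r_\ast,\frac{4}{N+2}\right)$, the key pointwise observation is that $p(z)-2+\tfrac{r}{2}\geq p^--2+\tfrac{r}{2}>0$ at every $z\in Q_T$. Since $\epsilon^2+|\nabla u|^2\geq |\nabla u|^2\geq 0$ and the exponent $p(z)-2+\tfrac{r}{2}$ is nonnegative, I can drop the regularization term to get the pointwise inequality
\[
\bigl(\epsilon^2+|\nabla u|^2\bigr)^{p(z)-2+\frac{r}{2}}|\nabla u|^2\;\geq\;|\nabla u|^{2(p(z)-2)+r}\cdot |\nabla u|^2\;=\;|\nabla u|^{2(p(z)-1)+r}.
\]
Integrating over $\Omega$ and inserting this into the right-hand side of \eqref{eq:interpol-1} yields
\[
\int_{\Omega}|\nabla u|^{2(p(z)-1)+r}\,dx\;\leq\;\int_{\Omega}(\epsilon^2+|\nabla u|^2)^{p(z)-2+\frac{r}{2}}|\nabla u|^2\,dx\;\leq\;\delta\int_{\Omega}(\epsilon^2+|\nabla u|^2)^{p(z)-2}|u_{xx}|^2\,dx+C_\delta,
\]
which is the desired estimate on $\left(r_\ast,\frac{4}{N+2}\right)$.

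To extend the bound to all $r\in\left(0,\frac{4}{N+2}\right)$, I fix some auxiliary $r'\in\left(r_\ast,\frac{4}{N+2}\right)$ and apply the elementary inequality $s^a\leq 1+s^b$ valid for $s\geq 0$ and $0\leq a\leq b$, with $a=2(p(z)-1)+r$ and $b=2(p(z)-1)+r'$. This gives the pointwise bound $|\nabla u|^{2(p(z)-1)+r}\leq 1+|\nabla u|^{2(p(z)-1)+r'}$; integration over $\Omega$ and the previous step yield the same type of estimate (with a constant enlarged by $|\Omega|$) for every $r\in\left(0,\frac{4}{N+2}\right)$.

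The only delicate point is ensuring that the exponent $p(z)-2+r/2$ is strictly nonnegative, which is precisely what forces the lower bound $p^->\frac{2(N+1)}{N+2}$; everything else is a direct comparison and an application of the already-established interpolation inequality \eqref{eq:interpol-1}. There is no genuine obstacle, only the bookkeeping to locate $r_\ast$ in $\left[0,\frac{4}{N+2}\right)$ and to pass from the range $\left(r_\ast,\frac{4}{N+2}\right)$ to the full interval by a trivial Young-type comparison.
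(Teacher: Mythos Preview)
Your argument is correct and matches the paper's implicit reasoning exactly: the paper states the corollary without a separate proof, relying on \eqref{eq:interpol-1} together with the observation that $p(z)-2+\tfrac{r}{2}>0$ once $r>r_\ast$, which is precisely the pointwise comparison $(\epsilon^2+|\nabla u|^2)^{p-2+r/2}|\nabla u|^2\geq |\nabla u|^{2(p-1)+r}$ you wrote down, followed by Young's inequality for the remaining range of $r$. Your explicit choice $r_\ast=\max\{0,\,2(2-p^-)\}$ and the verification that it lies in $\left[0,\tfrac{4}{N+2}\right)$ under the hypothesis $p^->\tfrac{2(N+1)}{N+2}$ simply make precise what the paper leaves to the reader.
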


We proceed to estimate the last two terms on the right-hand side of \eqref{eq:p-est-1}.

\subsubsection{Estimates on the residual terms}

Since $|\vec \eta|<1$ and $|D_i p|\leq L$, the residual terms
$\mathcal{J}^{(4,5,6)}_{ij}$ are estimated as follows: by
\eqref{eq:interpol-1}- \eqref{eq:interpol-2} and the Young inequality
\begin{equation}
\label{eq:est-J-4-5}
\begin{split}
\int_\Omega & (\epsilon^2+|\nabla u|^2)^{p-2}\left(|\mathcal{J}^{(4)}_{ij}| +|\mathcal{J}^{(5)}_{ij}|\right)\,dx
\\
& \leq C\int_{\Omega}\left((\epsilon^2+|\nabla u|^2)^{\frac{p-2}{2}}|u_{xx}|\right) \left((\epsilon^2+|\nabla u|^2)^{\frac{p-1}{2}}|\ln (\epsilon^2+ |\nabla u|^2)|\right)\,dx
\\
&
\leq \delta \int_\Omega(\epsilon^2+|\nabla u|^2)^{p-2}|u_{xx}|^2+C\int_\Omega(\epsilon^2+|\nabla u|^2)^{p-1+\lambda}\,dx+C',
\end{split}
\end{equation}
and
\begin{equation}
\label{eq:est-J-6}
\begin{split}
(\epsilon^2+|\nabla u|^2)^{p-2}|\mathcal{J}^{(6)}_{ij}| & \leq C(\epsilon^2+|\nabla u|^2)^{p-1}\ln^2(\epsilon^2+|\nabla u|^2)
\\
&
\leq C_1(\epsilon^2+|\nabla u|^2)^{p-1+\lambda}.
\end{split}
\end{equation}
The last term on the right-hand side of \eqref{eq:est-J-6} is estimated by means of the following inequality: for every $\lambda>0$
\begin{equation}
\label{eq:p-r}
\begin{split}
(\epsilon^2+|\nabla u|^2)^{p-1+\lambda} & \leq \begin{cases}
(2 \epsilon)^{2(p-1+ \lambda)} & \text{if $|\nabla u|\leq \epsilon$},
\\
(\epsilon^2+|\nabla u|^2)^{p-2+\lambda}|\nabla u|^2 & \text{if $|\nabla u|>\epsilon$}
\end{cases}
\\
&
\leq C\left(1+(\epsilon^2+|\nabla u|^2)^{p-2+\lambda}|\nabla u|^2 \right).
\end{split}
\end{equation}

Plugging \eqref{eq:p-r} into \eqref{eq:est-J-6}, and then using \eqref{eq:interpol-1}  with $\frac{r}{2}=\lambda$ we obtain: for every $\delta>0$
\begin{equation}
\label{eq:est-J-6-fin}
\int_{\Omega} (\epsilon^2+|\nabla u|^2)^{p-2}|\mathcal{J}^{(6)}_{ij}|\,dx\leq \delta
\int_{\Omega}(\epsilon^2+|\nabla u|^2)^{p-2}|u_{xx}|^2\,dx+C.
\end{equation}
\subsubsection{The boundary integrals}
Since $\partial\Omega\in C^2$, then $|\operatorname{trace}\mathcal{B}|\leq K$ with a finite constant $K>0$. Thus, estimating the boundary integral in \eqref{eq:double-final-e} amounts to estimating the integral of $(\epsilon^2+|\nabla u|^2)^{p-2}|\nabla u|^2$ over $\partial\Omega$.   By \cite[Lemma 1.5.1.9]{Grisvard} there is a function $\vec \mu\in C^{\infty}(\overline{\Omega})^N$ such that $\vec \mu\cdot\nu \geq \delta>0$ on $\partial\Omega$. Then
\[
\begin{split}
\delta\int_{\partial\Omega} & (\epsilon^2+|\nabla u|^2)^{p-2}|\nabla u|^2\,dS  \leq \int_{\Omega}\operatorname{div}\left((\epsilon^2+|\nabla u|^2)^{p-2}|\nabla u|^2\vec \mu\right)\,dx
 \\
 &
 = \int_{\Omega} \vec \mu \cdot\nabla ((\epsilon^2+|\nabla u|^2)^{p-2}|\nabla u|^2)\,dx + \int_{\Omega}(\operatorname{div}\vec \mu )(\epsilon^2+|\nabla u|^2)^{p-2}|\nabla u|^2\,dx
\\
& \leq C_1\int_{\Omega}(\epsilon^2+|\nabla u|^2)^{p-2}|\nabla u|^2\,dx
\\
&
+ C_2\int_{\Omega} \left[(\epsilon^2+|\nabla u|^2)^{p-3}|\nabla u|^2+(\epsilon^2+|\nabla u|^2)^{p-2}\right]|\nabla u||u_{xx}| \,dx
\\
&
+ C_3\int_{\Omega} (\epsilon^2+|\nabla u|^2)^{p-2}|\nabla u|^2 |\ln (\epsilon^2+|\nabla u|^2)||\nabla p|\,dx
\\
& \equiv \mathcal{I}_1 +\mathcal{I}_2 + \mathcal{I}_3.
\end{split}
\]
The integrals $\mathcal{I}_1$, $\mathcal{I}_3$ are estimated as $\mathcal{J}_{ij}^{(6)}$ in \eqref{eq:est-J-6} and \eqref{eq:p-r}: for every $\delta\in (0,1)$
\begin{equation}
\label{eq:I-1-3}
|\mathcal{I}_1| + |\mathcal{I}_3| \leq \delta \int_\Omega(\epsilon^2+|\nabla u|^2)^{p-2}|u_{xx}|^2\,dx+C_\delta.
\end{equation}
By Young's inequality
\[
\begin{split}
|\mathcal{I}_2| & \leq C \int_{\Omega} \left((\epsilon^2+|\nabla u|^2)^{\frac{p-2}{2}}|\nabla u|\right)\left((\epsilon^2+|\nabla u|^2)^{\frac{p-2}{2}}|u_{xx}|\right) \,dx
\\
& \leq \delta \int_\Omega (\epsilon^2+|\nabla u|^2)^{p-2}|u_{xx}|^2 \,dx + C \mathcal{I}_1,
\end{split}
\]
where $\mathcal{I}_1$ is already estimated in \eqref{eq:I-1-3}.
Gathering these estimates with a sufficiently small $\delta$, we reformulate Lemma \ref{le:principal-e} as follows.

\begin{lemma}
\label{le:principal-improved}
Let $\partial\Omega\in C^2$, $u\in C^3(\Omega)\cap C^{2}(\overline{\Omega})$. Assume that $p\in
C^2(\Omega)\cap C^{0,1}(\overline\Omega)$ with the Lipschitz constant $L$ and
\[
p^->\frac{2(N+1)}{N+2}.
\]
Then there exist constants $C_i=C_i(N,p^\pm,L,\partial\Omega, \|u\|_{2,\Omega})>0$, $i=1,2$, such that
\begin{equation}
\label{eq:p-est-2} C_1\int_{\Omega}(\epsilon^2+|\nabla
u|^2)^{p-2}|u_{xx}|^2\,dx \leq
\int_{\Omega}\left(\operatorname{div}\mathcal{F}_\epsilon (z,\nabla u)
\right)^2\,dx +C_2
\end{equation}
and, by Corollary \ref{cor:interpol-prim},
\begin{equation}
\label{eq:p-est-3}
\begin{split}
\int_\Omega |\nabla u|^{2(p-1)+r}\,dx & \leq \delta
\int_{\Omega}(\epsilon^2+|\nabla u|^2)^{p-2}|u_{xx}|^2\,dx +C
\\
& \leq \delta \int_{\Omega}(\operatorname{div}\mathcal{F}_\epsilon(z,\nabla
u))^2\,dx+C'
\end{split}
\end{equation}
where $C$, $C'$ depend on the same quantities as the constants in
\eqref{eq:p-est-2} but do not depend on $\epsilon$. By Corollary
\ref{cor:convex-1}, for convex domains $\Omega$ the constants $C_i$ in
\eqref{eq:p-est-2} and $C'$ in \eqref{eq:p-est-3} are independent of
$\partial\Omega$.
\end{lemma}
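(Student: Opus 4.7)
The plan is to assemble the first inequality \eqref{eq:p-est-2} by starting from the identity of Lemma \ref{le:principal-e} and absorbing both the boundary integral and the residual terms $\mathcal{J}^{(s)}_{ij}$, $s=4,5,6$, into the principal Hessian integral $\int_{\Omega}(\epsilon^2+|\nabla u|^2)^{p-2}|u_{xx}|^2\,dx$ by means of the estimates already derived earlier in the section. The second inequality \eqref{eq:p-est-3} then follows by chaining \eqref{eq:p-est-2} with the interpolation inequality in Corollary \ref{cor:interpol-prim}. The condition $p^->\frac{2(N+1)}{N+2}$ is exactly what is needed to legitimately apply Lemma \ref{le:racsam} with $\eta=2(p-1)$, since $\eta^->\frac{2N}{N+2}$ reduces to the stated lower bound on $p^-$.

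First I would rewrite Lemma \ref{le:principal-e} in the form
\[
C\int_{\Omega}(\epsilon^2+|\nabla u|^2)^{p-2}|u_{xx}|^2\,dx\leq \int_{\Omega}(\operatorname{div}\mathcal{F}_\epsilon(z,\nabla u))^2\,dx+K\int_{\partial\Omega}(\mathcal{F}_\epsilon\cdot\vec\nu)^2\,dS+\sum_{s=4}^{6}\sum_{i,j=1}^N\int_{\Omega}|\mathcal{J}^{(s)}_{ij}|\,dx.
\]
For the residual terms, I would invoke the estimates \eqref{eq:est-J-4-5} and \eqref{eq:est-J-6-fin} (which in turn rely on the interpolation inequalities \eqref{eq:interpol-1}, \eqref{eq:interpol-2} and on inequality \eqref{eq:p-r}) to conclude that for every $\delta>0$,
\[
\sum_{s=4}^{6}\sum_{i,j=1}^{N}\int_{\Omega}(\epsilon^2+|\nabla u|^2)^{p-2}|\mathcal{J}^{(s)}_{ij}|\,dx\leq \delta\int_{\Omega}(\epsilon^2+|\nabla u|^2)^{p-2}|u_{xx}|^2\,dx+C_\delta.
\]
For the boundary integral, I would combine the pointwise estimate $|\operatorname{trace}\mathcal{B}|\leq K$ with the divergence-theorem argument that produced $\mathcal{I}_1$, $\mathcal{I}_2$, $\mathcal{I}_3$, and use the bounds \eqref{eq:I-1-3} together with the Young-type estimate on $\mathcal{I}_2$ to obtain
\[
K\int_{\partial\Omega}(\mathcal{F}_\epsilon\cdot\vec\nu)^2\,dS\leq \delta\int_{\Omega}(\epsilon^2+|\nabla u|^2)^{p-2}|u_{xx}|^2\,dx+C_\delta.
\]
Choosing $\delta$ small enough that both absorbed $\delta$-contributions together do not exceed $C/2$, I can move them to the left-hand side, which yields \eqref{eq:p-est-2} with the constants $C_1,C_2$ having the claimed dependence.

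The remaining inequality \eqref{eq:p-est-3} is a straightforward consequence: Corollary \ref{cor:interpol-prim} already provides
\[
\int_{\Omega}|\nabla u|^{2(p-1)+r}\,dx\leq \delta\int_{\Omega}(\epsilon^2+|\nabla u|^2)^{p-2}|u_{xx}|^2\,dx+C_\delta
\]
for all $r\in(0,\tfrac{4}{N+2})$, and substituting the already-proven \eqref{eq:p-est-2} rewrites the right-hand side in terms of $\int_{\Omega}(\operatorname{div}\mathcal{F}_\epsilon)^2\,dx$. Finally, Corollary \ref{cor:convex-1} makes the constants independent of $\partial\Omega$ when $\Omega$ is convex, because the boundary term was nonpositive to begin with and never had to be absorbed. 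The main difficulty is not in this assembly but in the careful bookkeeping: making sure the single absorption parameter $\delta$ is chosen uniformly across the residual and boundary estimates, and tracking the dependence of $C_\delta$ on the Lipschitz constant $L$ of $p$, on $p^\pm$, on $\|u\|_{2,\Omega}$, and on the geometry of $\partial\Omega$ so that \eqref{eq:p-est-2} holds with constants truly independent of $\epsilon$.
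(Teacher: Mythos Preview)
Your proposal is correct and follows essentially the same route as the paper: start from Lemma~\ref{le:principal-e}, absorb the residual terms via \eqref{eq:est-J-4-5}, \eqref{eq:est-J-6-fin} and the boundary integral via the $\mathcal{I}_1,\mathcal{I}_2,\mathcal{I}_3$ argument with \eqref{eq:I-1-3}, then choose $\delta$ small enough and chain the result with Corollary~\ref{cor:interpol-prim}. The only cosmetic slip is that in your first displayed inequality you write $\int_\Omega|\mathcal{J}^{(s)}_{ij}|\,dx$ while in the next one you (correctly) include the weight $(\epsilon^2+|\nabla u|^2)^{p-2}$; the latter is the right object, coming from \eqref{eq:reduction}.
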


\section{A priori estimates}
\label{sec:estimates} Throughout this section, we assume that $u$ is the
strong solution of regularized problem \eqref{eq:main-reg-1} with data $f_m$,
$u_{0m}$ and $\textbf{data}_m$. The first a priori estimate is already derived
in Lemma \ref{le:first-est} for $\tau=1$. To obtain the second a priori
estimate we make use of the representation
\begin{equation}
\label{eq:u-t}
\begin{split}
& (\epsilon^2 +|\nabla u|^2)^{\frac{p_m-2}{2}}\nabla u\cdot \nabla u_t = \frac{1}{2}(\epsilon^2+|\nabla u|^2)^{\frac{p_m-2}{2}}\left(\epsilon^2+|\nabla u|^2\right)_t
\\
& = \frac{1}{2}\left(\frac{2}{p_m}\left(\epsilon^2+|\nabla u|^2\right)^{\frac{p_m}{2}}\right)_t
\\
&
\qquad - \left(\frac{1}{p_m}\right)_t\left(\epsilon^2+|\nabla u|^2\right)^{\frac{p_m}{2}} - \frac{1}{2p_m}(\epsilon^2 +|\nabla u|^2)^\frac{p_m}{2}\ln (\epsilon^2 +|\nabla u|^2) (p_m)_t
\\
& =\left(\frac{1}{p_m}(\epsilon^2 +|\nabla u|^2)^{\frac{p_m}{2}}\right)_t +\frac{(p_m)_t}{p_m}(\epsilon^2 +|\nabla u|^2)^{\frac{p_m}{2}}\left(\frac{1}{p_m}-\frac{1}{2}\ln (\epsilon^2 +|\nabla u|^2)\right).
\end{split}
\end{equation}
Now we multiply \eqref{eq:main-reg-1} by $-\operatorname{div}\mathcal{F}_{\epsilon m}(z,\nabla u)$, integrate over $Q_t=\Omega\times (0,t)$, use the condition $u=0$ on $\partial \Omega \times (0,T)$ and apply \eqref{eq:u-t} and the Cauchy inequality:
\begin{equation}
\label{eq:second-est}
\begin{split}
& \int_{\Omega}\frac{1}{p_m}(\epsilon^2 +|\nabla u|^2)^{\frac{p_m}{2}}\,dx + C\int_{Q_t}\left(\operatorname{div}\mathcal{F}_{\epsilon m}(z,\nabla u)\right)^2\,dz
\\
&
\leq C + \int_{\Omega}\frac{1}{p_m(x,0)}(\epsilon^2 +|\nabla u_{0m}|^2)^{\frac{p_m(x,0)}{2}}\,dx
+ \int_{Q_t}|f_m||\operatorname{div}\mathcal{F}_{\epsilon m}(z,\nabla u)|\,dz
\\
&
+M \int_{Q_t}(\epsilon^2 +|\nabla u|^2)^{\frac{p_m}{2}}\left(1+|\ln (\epsilon^2 +|\nabla u|^2)|\right)\,dz
+ \int_{Q_t} |F_{\epsilon m}(z,u,\nabla u)| |\operatorname{div}\mathcal{F}_{\epsilon m}(z,\nabla u)|\,dz
\\
& \leq C + \int_{\Omega}\frac{1}{p_m(x,0)}(\epsilon^2 +|\nabla u_{0m}|^2)^{\frac{p_m(x,0)}{2}}\,dx + C_\delta \int_{Q_t}f_m^2\,dz +\delta \int_{Q_t}\left(\operatorname{div}\mathcal{F}_{\epsilon m}(z,\nabla u)\right)^2\,dz
\\
& \qquad + C'\int_{Q_t} (\epsilon^2+u^2)^{q_m-1}\,dz
+ C''\int_{Q_t} (\epsilon^2+|\nabla u|^{2})^{s_m-1}\,dz
\\
&
\qquad +
M \int_{Q_t}(\epsilon^2 +|\nabla u|^2)^{\frac{p_m}{2}}\left(1+|\ln (\epsilon^2 +|\nabla u|^2)|\right)\,dz,\quad \delta\in (0,1).
\end{split}
\end{equation}
The constant $M$ depends on the Lipschitz constant $L$ of $p_m$.
The last term on the right-hand side of \eqref{eq:second-est} is estimated by virtue of \eqref{eq:interpol-1}, \eqref{eq:interpol-2}: it is sufficient to claim that
\[
p_m<2(p_m-1)+\frac{4}{N+2} \quad \Leftrightarrow \quad p_m>2-\frac{4}{N+2}=\frac{2N}{N+2},
\]
while $p_m^->\frac{2(N+1)}{N+2}$ by assumption. Using Lemma \ref{le:principal-improved} and choosing $\delta>0$ sufficiently small we obtain

\begin{equation}
\label{eq:2-nd-est-1}
\begin{split}
\sup_{(0,T)}
 &
 \int_{\Omega}(\epsilon^2 +|\nabla u|^2)^{\frac{p_m}{2}}\,dx+C\int_{Q_T}(\epsilon^2+|\nabla u|^2)^{p_m-2}|u_{xx}|^2\,dz
\\
& \leq \sup_{(0,T)} \int_{\Omega}(\epsilon^2 +|\nabla u|^2)^{\frac{p_m}{2}}\,dx+\int_{Q_T} \left(\operatorname{div}\mathcal{F}_{\epsilon m} (z,\nabla u)\right)^2\,dz
\\
&
 \leq C'\left(1+\int_{\Omega}\frac{1}{p_m(x,0)}(\epsilon^2 +|\nabla u_{0m}|^2)^{\frac{p_m(x,0)}{2}}\,dx + \int_{Q_t}f_m^2\,dz\right)
\\
& \qquad + C'+C''\int_{Q_t} u^{2(q_m-1)}\,dz
+ C'''\int_{Q_t} |\nabla u|^{2(s_m-1)}\,dz.
\end{split}
\end{equation}
If $q_m^+\leq 2$, then $u^{2(q_m-1)}\leq C(1+u^2)$. By Lemma \ref{le:first-est} with $\tau=1$
\[
\int_{Q_T}u^{2(q_m-1)}\,dz\leq C\int_0^T(1+\|u\|_{2,\Omega}^2)\,dt\leq C'T.
\]
If $q_m^+>2$, we have $u^{2(q_m-1)}\leq C(1+u^{2(q_m^+-1)})$. By the Gagliardo-Nirenberg inequality

\begin{equation}
\label{eq:G-N}
\int_{\Omega}|u|^{2(q_m^+-1)}\,dx\leq C \|\nabla u\|_{p_m^-,\Omega}^{2\theta(q_m^+-1)} \|u\|_{2,\Omega}^{2(1-\theta)(q_m^+-1)}+C\|u\|_{2,\Omega}^{2(q_m^+-1)}
\end{equation}
with $\theta= \frac{\frac{1}{2}-\frac{1}{2(q_m^+-1)}}{\frac{N+2}{2N}-\frac{1}{p_m^-}}\in [0,1]$. Since $q_m^+\geq 2$, such a $\theta$ exists if
\begin{equation}
\label{eq:ineq-1}
2(q_m^+-1)\leq \dfrac{Np_m^-}{N-p^-}\qquad \Leftrightarrow\quad q_m^+\leq 1+\dfrac{1}{2}\dfrac{Np_m^-}{N-p_m^-}\qquad \text{if $p_m^-
<N$}.
\end{equation}
Moreover, we have to claim that
\begin{equation}
\label{eq:ineq-2}
2\theta (q_m^+-1)\leq p_m^-\qquad \Leftrightarrow \qquad q_m^+\leq 1+p_m^-\frac{N+2}{2N}.
\end{equation}
Inequality \eqref{eq:ineq-2} is stronger than \eqref{eq:ineq-1}. Integrating \eqref{eq:G-N} in $t$ and using \eqref{eq:ineq-2} we obtain the inequality
\begin{equation}
\label{eq:q-est}
\begin{split}
\int_{Q_T}u^{2(q_m-1)}\,dz
\leq  C'+C''\int_0^T \left(\int_\Omega|\nabla u|^{p_m^-}\,dx\right)^{\frac{2\theta(q_m^+-1)}{p_m^-} }\,dt.
\end{split}
\end{equation}
Using condition \eqref{eq:ineq-2}, the H\"older and Young inequalities, by Lemma \ref{le:first-est} with $\tau=1$ we have
\[
\int_0^T \left(\int_\Omega|\nabla u|^{p_m^-}\,dx\right)^{\frac{2\theta(q_m^+-1)}{p_m^-} }\,dt \leq C'+C''\int_{Q_T}|\nabla u|^{p_m}\,dz \leq C,
\]
where the constant $C$ depends on $|\Omega|$, $T$, $\delta$ and $\|f_m\|_{2,Q_T}$, $\|u_{0m}\|_{2,\Omega}$. These arguments prove the following assertion.
\begin{proposition}
\label{pro:2-est-1}
Let the conditions of Lemma \ref{le:first-est} be fulfilled. If
\[
1<q_m(z)\leq 1+p_m^-\frac{N+2}{2N}\quad \text{and} \quad p_m^->\frac{2(N+1)}{N+2},
\]
then for the solution of problem \eqref{eq:main-reg-1}

\[
\int_{Q_T} a_m(z)(\epsilon^2+u^2)^{\frac{q_m-2}{2}}u\operatorname{div}\mathcal{F}_{\epsilon m}(z,\nabla u)\,dz \leq \delta \int_{Q_T}(\operatorname{div}\mathcal{F}_{\epsilon m}(z,\nabla u))^2\,dz + C
\]
with an arbitrary positive $\delta$ and a constant $C$ depending only $\|f_m\|_{2,Q_T}$, $\|u_{0m}\|_{2,\Omega}$, $T$, $|\Omega|$, $a^+$, $\delta$.
\end{proposition}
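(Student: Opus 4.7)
My plan is to first apply Young's inequality to split the target integral as
\[
\int_{Q_T} a_m(\epsilon^2+u^2)^{\frac{q_m-2}{2}}u\,\operatorname{div}\mathcal{F}_{\epsilon m}\,dz \leq \delta\int_{Q_T}(\operatorname{div}\mathcal{F}_{\epsilon m})^2\,dz + C_\delta (a^+)^2\int_{Q_T}(\epsilon^2+u^2)^{q_m-2}u^2\,dz.
\]
Since $\epsilon<1$, the integrand in the remainder satisfies $(\epsilon^2+u^2)^{q_m-2}u^2\leq C(1+|u|^{2(q_m-1)})\leq C(1+|u|^{2(q_m^+-1)})$, so the task reduces to bounding $\|u\|_{L^{2(q_m^+-1)}(Q_T)}^{2(q_m^+-1)}$ by a constant that depends only on the data.

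Next I would split into two cases. If $q_m^+\leq 2$, the elementary bound $|u|^{2(q_m^+-1)}\leq 1+u^2$ together with $\operatorname{ess\,sup}_{(0,T)}\|u\|_{2,\Omega}\leq C$ from Lemma \ref{le:first-est} (with $\tau=1$) closes the argument after integration in $t$. If $q_m^+>2$, I would apply the Gagliardo--Nirenberg inequality with target exponent $r=2(q_m^+-1)$ and control space $L^2$:
\[
\|u\|_{r,\Omega}^{r}\leq C\|\nabla u\|_{p_m^-,\Omega}^{\theta r}\|u\|_{2,\Omega}^{(1-\theta)r}+C\|u\|_{2,\Omega}^{r},\qquad \theta=\dfrac{\tfrac{1}{2}-\tfrac{1}{r}}{\tfrac{N+2}{2N}-\tfrac{1}{p_m^-}}.
\]
Integrating in $t$, using the uniform $L^2$-bound on $u$, and invoking Young's inequality in time with exponent $\sigma\equiv\frac{\theta r}{p_m^-}\leq 1$ reduces the matter to bounding $\int_{Q_T}|\nabla u|^{p_m^-}\,dz$, which is in turn controlled by $CT|\Omega|+\int_{Q_T}|\nabla u|^{p_m}\,dz$ via Lemma \ref{le:first-est}.

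The main obstacle, and the place where the hypothesis $q_m^+\leq 1+p_m^-\frac{N+2}{2N}$ enters sharply, is the verification that the Gagliardo--Nirenberg exponent $\theta$ is admissible ($\theta\in [0,1]$) and that $\sigma\leq 1$. A direct algebraic computation yields the equivalence
\[
\sigma\leq 1 \ \Longleftrightarrow\ q_m^+-2\leq p_m^-\frac{N+2}{2N}-1 \ \Longleftrightarrow\ q_m^+\leq 1+p_m^-\frac{N+2}{2N},
\]
so the upper bound on $q_m^+$ in the assumption is precisely what is required to convert the Gagliardo--Nirenberg factor involving the gradient into the available energy bound. Once this algebraic check is in place, the remainder of the proof is a routine concatenation of the splitting, the interpolation, and the energy estimate of Lemma \ref{le:first-est}, yielding the claimed inequality with $C$ depending only on $\|f_m\|_{2,Q_T}$, $\|u_{0m}\|_{2,\Omega}$, $T$, $|\Omega|$, $a^+$, and $\delta$.
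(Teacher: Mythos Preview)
Your proposal is correct and follows essentially the same approach as the paper: the initial Young splitting, the case distinction $q_m^+\leq 2$ versus $q_m^+>2$, the Gagliardo--Nirenberg interpolation with exactly this $\theta$, and the algebraic check that $2\theta(q_m^+-1)\leq p_m^-$ is equivalent to $q_m^+\leq 1+p_m^-\frac{N+2}{2N}$ all appear verbatim in the paper's argument leading to this proposition. The paper also records the weaker Sobolev-type constraint $2(q_m^+-1)\leq \frac{Np_m^-}{N-p_m^-}$ needed for $\theta\leq 1$, but then notes that the condition $\sigma\leq 1$ you focus on is in fact the stronger one, so your emphasis is well placed.
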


\begin{proposition}
\label{pro:2-est-2}
Assume that the conditions of Lemma \ref{le:first-est} are fulfilled and  $p_m^->\frac{2(N+1)}{N+2}$. Then
\begin{equation}
\label{eq:2-est-2}
\int_{Q_T} (\epsilon^2+|\nabla u|^{2})^{\frac{s_m-2}{2}}(\vec c_m,\nabla u)\operatorname{div}\mathcal{F}_{\epsilon m}(z,\nabla u)\,dx\leq \delta \int_{Q_T}(\operatorname{div}\mathcal{F}_{\epsilon m}(z,\nabla u))^2\,dz +C
\end{equation}
with any $\delta>0$ and a constant $C$ depending on $p^\pm$, $|\Omega|$, $T$, $\max c_i^+$, $\delta$ and $\operatorname{ess}\sup_{(0,T)}\|u\|_{2,\Omega}$.
\end{proposition}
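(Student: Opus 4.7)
The plan is to reduce \eqref{eq:2-est-2} to the higher-integrability bound \eqref{eq:p-est-3} of Lemma~\ref{le:principal-improved} by two successive applications of Young's inequality, followed by a pointwise reduction using the structural condition on $s$. This is the natural analogue, for the convection term, of the argument that was just carried out for the $q_m$-nonlinearity in Proposition~\ref{pro:2-est-1}.

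First I would isolate the divergence factor. Using the uniform bound $|\vec c_m|\leq C$ from \eqref{eq:data-reg-0}, together with the trivial estimate $(\epsilon^2+|\nabla u|^2)^{\frac{s_m-2}{2}}|\nabla u|\leq (\epsilon^2+|\nabla u|^2)^{\frac{s_m-1}{2}}$, Cauchy's inequality with a parameter $\delta_1>0$ gives
\[
\int_{Q_T}(\epsilon^2+|\nabla u|^2)^{\frac{s_m-2}{2}}(\vec c_m,\nabla u)\operatorname{div}\mathcal{F}_{\epsilon m}\,dz\leq \delta_1\int_{Q_T}(\operatorname{div}\mathcal{F}_{\epsilon m})^2\,dz + C_{\delta_1}\int_{Q_T}(\epsilon^2+|\nabla u|^2)^{s_m-1}\,dz.
\]
Since $\epsilon\in(0,1)$ and $s_m\geq s^->1$, a routine case split $|\nabla u|\gtrless \epsilon$ produces the elementary inequality $(\epsilon^2+|\nabla u|^2)^{s_m-1}\leq C(1+|\nabla u|^{2(s_m-1)})$ with $C=C(s^\pm)$, so the residual integral is controlled by $\int_{Q_T}|\nabla u|^{2(s_m-1)}\,dz$ up to a multiple of $|Q_T|$.

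The key observation is that the structural assumption \eqref{eq:s}, which after regularization reads $s_m(z)\leq p_m^--\mu\leq p_m(z)$, ensures $2(s_m-1)<2(p_m-1)+r$ pointwise in $Q_T$ for every $r\in(0,\frac{4}{N+2})$. Hence Young's inequality yields $|\nabla u|^{2(s_m-1)}\leq 1+|\nabla u|^{2(p_m-1)+r}$ pointwise. The slice-wise estimate \eqref{eq:p-est-3} of Lemma~\ref{le:principal-improved}, invoked with a parameter $\delta_2>0$ and integrated in time, then produces
\[
\int_{Q_T}|\nabla u|^{2(p_m-1)+r}\,dz\leq \delta_2\int_{Q_T}(\operatorname{div}\mathcal{F}_{\epsilon m})^2\,dz + C,
\]
where the uniform control of the $t$-dependent constants is justified by $\operatorname{ess}\sup_{(0,T)}\|u\|_{2,\Omega}<\infty$, a bound furnished by Lemma~\ref{le:first-est}.

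Chaining the three estimates and choosing $\delta_1,\delta_2$ small enough so that $\delta_1+C_{\delta_1}\delta_2\leq \delta$ yields \eqref{eq:2-est-2} with a constant depending only on the data listed in the statement. I expect the main obstacle to be bookkeeping: ensuring that the structural constraint $s^+\leq p^--2\mu$ is engaged at exactly the step where the power $2(s_m-1)$ must lie strictly below $2(p_m-1)+r$ for some admissible $r\in(0,\frac{4}{N+2})$. Once this alignment is secured, the remainder is a mechanical chain of Young and $(\epsilon,|\nabla u|)$-elementary estimates, of the same flavour as those already employed for $\mathcal{J}_{ij}^{(6)}$ in Section~\ref{sec:flux}.
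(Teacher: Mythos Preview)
Your proposal is correct and follows essentially the same approach as the paper: Young's inequality to split off the divergence, then the structural bound $s_m\leq p_m$ to reduce the residual to the higher-integrability estimate of Lemma~\ref{le:principal-improved}. The only cosmetic difference is that the paper keeps the residual in the form $(\epsilon^2+|\nabla u|^2)^{s_m-2}|\nabla u|^2$ and passes through \eqref{eq:interpol-1} to the Hessian term before invoking \eqref{eq:p-est-2}, whereas you go directly to \eqref{eq:p-est-3}; these are the same chain of inequalities organized slightly differently.
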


\begin{proof}
By Young's inequality, \eqref{eq:interpol-1} and Lemma \ref{le:principal-improved}
\[
\begin{split}
\int_{\Omega} & (\epsilon^2+|\nabla u|^{2})^{\frac{s_m-2}{2}}(\vec c_m,\nabla u)\operatorname{div}\mathcal{F}_{\epsilon m}(z,\nabla u)\,dx
\\
& \leq \delta\int_\Omega (\operatorname{div}\mathcal{F}_{\epsilon m}(z,\nabla u))^2\,dx
 +C\int_\Omega (\epsilon^2+|\nabla u|^2)^{s_{m}-2}|\nabla u|^2
\,dx
\\
& \leq \delta\int_\Omega (\operatorname{div}\mathcal{F}_{\epsilon m}(z,\nabla u))^2\,dx +\delta \int_\Omega (\epsilon^2+|\nabla u|^2)^{p_m-2}|u_{xx}|^2\,dx+ C.
\end{split}
\]
\end{proof}
To estimate $u_t$ we use \eqref{eq:main-reg-1} and Propositions \ref{pro:2-est-1}, \ref{pro:2-est-2}:
\begin{equation}
\label{eq:der-t}
\begin{split}
& \|u_t\|_{2,Q_T}^2 =\int_{Q_T}\left(f_m+ F_{\epsilon m}(z,u,\nabla u)+\operatorname{div}((\epsilon^2 +|\nabla u|^2)^{\frac{p_m-2}{2}}\nabla u)\right)^2\,dz
\\
&
\quad \leq 3\|f_m\|_{2,Q_T}^2 +3\int_{Q_T}\left(\operatorname{div}\mathcal{F}_{\epsilon m}(z,\nabla u)\right)^2\,dz
\\
& \qquad + 6\int_{Q_T}a_m^2(\epsilon^2+u^2)^{q_m-2}u^2\,dz +6 \int_{Q_T}|c_m|^2(\epsilon^2+|\nabla u|^2)^{s_m-2}|\nabla u|^2\,dz
\\
& \quad \leq C\left(1+\int_{\Omega}\frac{1}{p_m(x,0)}(\epsilon^2 +|\nabla u_{0m}|^2)^{\frac{p_m(x,0)}{2}}\,dx + \int_{Q_t}f_m^2\,dz\right)
\end{split}
\end{equation}
with an independent of $u$ constant $C$. Gathering these estimates and recalling \eqref{eq:data-reg}-\eqref{eq:data-reg-0} we obtain the following assertion.

\begin{lemma}
\label{le:second-estimate} Let conditions \eqref{eq:p}-\eqref{eq:data} be
filfilled and $\partial\Omega\in C^{2+\alpha}$. Then the strong solution of
problem \eqref{eq:main-reg-1} with the data $f_m$, $u_{0m}$ and
$\textbf{data}_m$ satisfies the estimate
\[
\begin{split}
\|u_t\|^2_{2,Q_T} & + \sup_{(0,T)}
 \int_{\Omega}(\epsilon^2 +|\nabla u|^2)^{\frac{p_m}{2}}\,dx +\int_{Q_T}(\epsilon^2+|\nabla u|^2)^{p_m-2}|u_{xx}|^2\,dz
 \\
 & \leq \|u_t\|^2_{2,Q_T} + \sup_{(0,T)}
 \int_{\Omega}(\epsilon^2 +|\nabla u|^2)^{\frac{p_m}{2}}\,dx +C' \int_{Q_T} (\operatorname{div}\mathcal{F}_{\epsilon m}(z,\nabla u))^2\,dz +C
 \\
 &
 \leq C'\left(1+\int_\Omega |\nabla u_{0m}|^{p_m(x,0)}\,dx +\|f_m\|_{2,Q_T}^2\right)
 \\
 &
 \leq C''\left(1+\int_\Omega |\nabla u_{0}|^{p_0}\,dx +\|f\|_{2,Q_T}^2\right)
\end{split}
\]
with an independent of $\epsilon$ and $m$ constant $C''$.
\end{lemma}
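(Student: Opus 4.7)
The statement essentially packages together the preceding computations in Section \ref{sec:estimates}, so the plan is to gather those pieces in the right order and verify that the constants can be tracked uniformly in $\epsilon$ and $m$. The starting point is to test the regularized equation \eqref{eq:main-reg-1} by $-\operatorname{div}\mathcal{F}_{\epsilon m}(z,\nabla u)$, which is admissible because the classical solution from Lemma \ref{le:existence-reg} has the regularity $u(\cdot,t)\in C^3(\Omega)\cap C^{2+\gamma}(\overline{\Omega})$ and vanishes on $\partial\Omega\times (0,T)$. Integration over $Q_t$, combined with the key identity \eqref{eq:u-t} (which rewrites the cross term $(\epsilon^2+|\nabla u|^2)^{(p_m-2)/2}\nabla u\cdot\nabla u_t$ as a time derivative plus a logarithmic residual coming from the $t$-dependence of $p_m$), yields the master inequality \eqref{eq:second-est}.

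Next I would handle the four kinds of terms on the right-hand side of \eqref{eq:second-est}. The $f_m$-term is absorbed via Cauchy-Schwarz into a small multiple of $\int_{Q_t}(\operatorname{div}\mathcal{F}_{\epsilon m})^2\,dz$ plus $\|f_m\|_{2,Q_T}^2$. The logarithmic residual $M\int_{Q_t}(\epsilon^2+|\nabla u|^2)^{p_m/2}(1+|\ln(\epsilon^2+|\nabla u|^2)|)\,dz$ is the delicate one: here I use the hypothesis $p_m^->2(N+1)/(N+2)>2N/(N+2)$, so that the elementary bound \eqref{eq:elem} together with the interpolation inequalities \eqref{eq:interpol-1}-\eqref{eq:interpol-2} (consequences of Lemma \ref{le:racsam}) lets me dominate this term by $\delta\int_{Q_T}(\epsilon^2+|\nabla u|^2)^{p_m-2}|u_{xx}|^2\,dz + C_\delta$. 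The two source terms $\int a_m(\epsilon^2+u^2)^{(q_m-2)/2}u\,\operatorname{div}\mathcal{F}_{\epsilon m}$ and $\int (\epsilon^2+|\nabla u|^2)^{(s_m-2)/2}(\vec c_m,\nabla u)\,\operatorname{div}\mathcal{F}_{\epsilon m}$ are exactly the contents of Propositions \ref{pro:2-est-1} and \ref{pro:2-est-2}; they are also absorbed up to constants depending only on $\|f\|_{2,Q_T}$, $\|u_0\|_{2,\Omega}$, $|\Omega|$, $T$ and the structural constants.

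Now I invoke Lemma \ref{le:principal-improved}, which gives the reverse bound
\[
C_1\int_{\Omega}(\epsilon^2+|\nabla u|^2)^{p_m-2}|u_{xx}|^2\,dx \leq \int_{\Omega}(\operatorname{div}\mathcal{F}_{\epsilon m}(z,\nabla u))^2\,dx + C_2,
\]
so that all the $\delta$-terms can be absorbed back into the left-hand side by choosing $\delta$ small. This produces the combined bound on $\sup_{(0,T)}\int_\Omega(\epsilon^2+|\nabla u|^2)^{p_m/2}\,dx$ and $\int_{Q_T}(\epsilon^2+|\nabla u|^2)^{p_m-2}|u_{xx}|^2\,dz$, as stated in \eqref{eq:2-nd-est-1}. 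The right-hand side depends on $\int_{\Omega}|\nabla u_{0m}|^{p_m(x,0)}\,dx$ and $\|f_m\|_{2,Q_T}^2$, which by the convergence properties \eqref{eq:data-reg}-\eqref{eq:data-reg-0} are uniformly controlled by $\|\nabla u_0\|_{p_0(\cdot),\Omega}$ and $\|f\|_{2,Q_T}$.

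Finally, to close the estimate for $\|u_t\|_{2,Q_T}^2$, I read off $u_t$ from equation \eqref{eq:main-reg-1}, square and integrate to get
\[
\|u_t\|_{2,Q_T}^2\leq 3\|f_m\|_{2,Q_T}^2+3\int_{Q_T}(\operatorname{div}\mathcal{F}_{\epsilon m})^2\,dz+6\int_{Q_T}|F_{\epsilon m}(z,u,\nabla u)|^2\,dz,
\]
and the last integral is controlled exactly as in the proofs of Propositions \ref{pro:2-est-1}, \ref{pro:2-est-2} by $\int(\operatorname{div}\mathcal{F}_{\epsilon m})^2$ plus lower-order terms already bounded through Lemma \ref{le:first-est}. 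The main obstacle is keeping the log-residual in \eqref{eq:u-t} and the $q_m,s_m$ source terms from producing constants that blow up as $\epsilon\to 0^+$ or $m\to\infty$; this is exactly where the strict lower bound $p^->2(N+1)/(N+2)$ together with the gap assumptions $q^+,s^+\leq p^--2\mu$ are used via Lemma \ref{le:racsam} and the interpolation estimate \eqref{eq:interpol-prim}.
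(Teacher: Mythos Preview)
Your proposal is correct and follows essentially the same route as the paper: test \eqref{eq:main-reg-1} by $-\operatorname{div}\mathcal{F}_{\epsilon m}$, use the identity \eqref{eq:u-t} to obtain \eqref{eq:second-est}, absorb the logarithmic residual via \eqref{eq:interpol-1}--\eqref{eq:interpol-2}, handle the source terms through Propositions \ref{pro:2-est-1}--\ref{pro:2-est-2}, convert Hessian control into $(\operatorname{div}\mathcal{F}_{\epsilon m})^2$-control via Lemma \ref{le:principal-improved}, and finally recover $\|u_t\|_{2,Q_T}^2$ from the equation as in \eqref{eq:der-t}. The only cosmetic difference is the order in which the pieces are presented.
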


\section{Existence and regularity of strong solutions}
\label{sec:proofs}
\subsection{The regularized problem}

We begin with considering the regularized problem \eqref{eq:main-reg}. Let
$u_m$ be the solution of \eqref{eq:main-reg-1} with the free term $f_m$ and
the initial datum $u_{0m}$ satisfying \eqref{eq:data-reg}, and
$\textbf{data}_m$. By Lemma \ref{le:first-est} and Lemma
\ref{le:second-estimate}

\begin{equation}
\label{eq:unif-est}
\begin{split}
\operatorname{ess} & \sup_{(0,T)}\|u_{m}\|_{2,\Omega} +\|u_{m t}\|_{2,Q_T}+ \operatorname{ess}\sup_{(0,T)}\|\nabla u_m\|_{p_m(\cdot),\Omega}
\\
&
+\|(\epsilon^2+|\nabla u_m|^2)^{\frac{p_m-2}{2}}|(u_m)_{xx}| \|_{2,Q_T}+\|\operatorname{div}\mathcal{F}_{\epsilon m}(z,\nabla u_m) \|_{2,Q_T}\leq C
\end{split}
\end{equation}
with a constant $C$ independent of $m$ and $\epsilon$. By virtue of \eqref{eq:interpol-1}, for any $r\in \left(0,\frac{4}{N+2}\right)$
\begin{equation}
\label{eq:high-1}
\|(\epsilon^2+|\nabla u_m|^2)^{\frac{p_m-2}{2}+\frac{r}{4}}|\nabla u_m|\|_{2,Q_T} + \||\nabla u_m|\|_{p_m(\cdot),Q_T} \leq C,
\end{equation}
which yields the uniform in $m$ and $\epsilon$ estimate
\begin{equation}
\label{eq:high-2}
\|(\epsilon^2+|\nabla u_m|^2)^{\frac{s_m-2}{2}}|\nabla u_m|\|_{2+\lambda,Q_T}\leq C
\end{equation}
with some positive $\lambda$. It follows from estimate \eqref{eq:q-est} and
Proposition \ref{pro:extra-int} that
\begin{equation}
\label{eq:high-3}
\|(\epsilon^2+u_m^2)^{\frac{q_m-2}{2}}|u_m|\|_{2+\lambda,Q_T}\leq C
\end{equation}
with a positive constant $\lambda$ and an indepepdent of $m$, $\epsilon$ constant $C$. Moreover, the convergence $p_m \nearrow p$ in $C^{0,1}(\overline{Q}_T)$ and \eqref{eq:high-1} imply that for all sufficiently large $m$

\begin{equation}
\label{eq:high-4}
 \||\nabla u_m|\|_{p(\cdot),Q_T} + \||\mathcal{F}_{\epsilon m}(z,u_m,\nabla u_m)|\|_{p'(\cdot), Q_T} \leq C
\end{equation}
with an independent of $m$ and $\epsilon$ constant $C$. It follows that the sequence $\{u_m\}$ contains a subsequence (for which we will keep the same notation), and there exist functions $u$, $\chi$, $\eta$, $\zeta$  such that
\begin{equation}
\label{eq:conv-1}
\begin{split}
& \text{$u_m\to u$ $\star$-weakly in $L^\infty(0,T;L^2(\Omega))$, strongly in $L^2(Q_T)$ and a.e. in $Q_T$},
\\
& \text{$u_{m t}\rightharpoonup u_t$ in $L^2(Q_T)$},\quad
\text{$\nabla u_m \rightharpoonup \nabla u$ in $L^{p(\cdot)}(Q_T)^N$},
\\
& \text{$\mathcal{F}_{\epsilon m} (z,\nabla u_m)\rightharpoonup \eta$ in $L^{p'(\cdot)}(Q_T)^N$},
\\
& \text{$\operatorname{div}\mathcal{F}_{\epsilon m}(z,\nabla u_m)\rightharpoonup \zeta$ in $L^2(Q_T)$},
\\
& \text{$F_{\epsilon m}(z,u_m,\nabla u_m)\rightharpoonup \chi$ in $L^2(Q_T)$}.
\end{split}
\end{equation}

\begin{lemma}
\label{le:strong-conv-grad}
The sequence $\{\nabla u_m\}$ converges to $\nabla u$ in $(L^{p(\cdot)}(Q_T))^N$.
\end{lemma}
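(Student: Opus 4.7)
The plan is a Minty--Browder type argument in two stages: first I would obtain pointwise a.e.\ convergence of $\nabla u_m$ along a subsequence via the monotonicity \eqref{eq:mon-reg}, and then upgrade it to strong $L^{p(\cdot)}(Q_T)$-convergence through Vitali's theorem together with the higher integrability \eqref{eq:high-1}.

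As a first step I would test \eqref{eq:main-reg-1} (written for $u_m$) against $\phi=u_m-u$. The temporal term $\int_{Q_T}(u_m)_t(u_m-u)\,dz$ is controlled by Cauchy--Schwarz using the uniform $L^2$-bound on $(u_m)_t$ and the strong $L^2$-convergence $u_m\to u$; the same estimate absorbs $\int_{Q_T}f_m(u_m-u)\,dz$ and $\int_{Q_T}F_{\epsilon m}(z,u_m,\nabla u_m)(u_m-u)\,dz$, since both factors are uniformly bounded in $L^2(Q_T)$. This yields
\[
\int_{Q_T}\mathcal{F}_{\epsilon m}(z,\nabla u_m)\cdot\nabla(u_m-u)\,dz\to 0.
\]
Subtracting $\int_{Q_T}\mathcal{F}_{\epsilon m}(z,\nabla u)\cdot\nabla(u_m-u)\,dz$, which is $o(1)$ because $\mathcal{F}_{\epsilon m}(z,\nabla u)\to\mathcal{F}_\epsilon(z,\nabla u)$ strongly in $L^{p'(\cdot)}(Q_T)$ by dominated convergence (using $p_m\to p$ uniformly and $\epsilon>0$) while $\nabla u_m-\nabla u\rightharpoonup 0$ in $L^{p(\cdot)}(Q_T)$, I obtain
\[
I_m:=\int_{Q_T}\bigl(\mathcal{F}_{\epsilon m}(z,\nabla u_m)-\mathcal{F}_{\epsilon m}(z,\nabla u)\bigr)\cdot(\nabla u_m-\nabla u)\,dz\to 0.
\]

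By \eqref{eq:mon-reg} the integrand of $I_m$ is nonnegative, so along a subsequence it converges to zero a.e.\ in $Q_T$. On the set $\{p(z)\geq 2\}$ this gives directly $|\nabla u_m-\nabla u|^{p_m(z)}\to 0$ and hence $\nabla u_m\to\nabla u$ a.e. On $\{p(z)<2\}$ the bound reads
\[
C\,(1+|\nabla u_m|^2+|\nabla u|^2)^{\frac{p_m(z)-2}{2}}|\nabla u_m-\nabla u|^2\to 0,
\]
and a case analysis is needed: at any point $z$ where $|\nabla u_m(z)|$ were unbounded along a further subsequence, the left-hand side would behave like $|\nabla u_m|^{p_m(z)}$ and tend to infinity (here $p^->1$ is essential), contradicting the convergence; therefore $|\nabla u_m(z)|$ stays bounded, the prefactor is bounded below, and $\nabla u_m(z)\to\nabla u(z)$.

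Finally, the higher integrability \eqref{eq:high-1} combined with $p^->\frac{2(N+1)}{N+2}$ yields $\delta>0$ for which $|\nabla u_m|^{p(z)+\delta}$ is uniformly bounded in $L^1(Q_T)$, and consequently $\{|\nabla u_m-\nabla u|^{p(z)}\}_m$ is equi-integrable on $Q_T$. Vitali's theorem then promotes the a.e.\ convergence to $A_{p(\cdot)}(\nabla u_m-\nabla u)\to 0$, which by \eqref{i0} is equivalent to $\|\nabla u_m-\nabla u\|_{p(\cdot),Q_T}\to 0$. I expect the pointwise analysis on $\{p(z)<2\}$ to be the main subtlety, since the monotonicity bound degenerates when the gradients are large, and ruling out blow-up of $|\nabla u_m|$ along the subsequence leans crucially on the lower bound $p^->1$.
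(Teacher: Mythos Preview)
Your argument is correct, but it is organized differently from the paper's. The paper compares two approximants and shows that $\{\nabla u_m\}$ is a Cauchy sequence: testing the equations for $u_m$ and $u_n$ with $u_m-u_n$, it controls the cross term $\int(\mathcal{F}_{\epsilon n}-\mathcal{F}_{\epsilon m})(z,\nabla u_n)\cdot\nabla(u_m-u_n)$ via the H\"older-continuity estimate \eqref{eq:flux-cont} and $\sup|p_m-p_n|$, and then invokes an external quantitative inequality (\cite[Proposition~3.2]{Arora-Shm-RACSAM-2023}) to convert $\mathcal{R}_{\epsilon m}(\nabla u_m,\nabla u_n)\to 0$ directly into $\int|\nabla(u_m-u_n)|^{p_m}\to 0$, bypassing any pointwise case analysis. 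You instead compare $u_m$ with the already-identified weak limit $u$, which lets you treat the variable-exponent discrepancy by the simpler observation that $\mathcal{F}_{\epsilon m}(z,\nabla u)\to\mathcal{F}_\epsilon(z,\nabla u)$ strongly in $L^{p'(\cdot)}$ (dominated convergence, using $|\mathcal{F}_{\epsilon m}(z,\nabla u)|^{p'(z)}\leq C(1+|\nabla u|^{p(z)})$), and then extract a.e.\ convergence of the gradients from \eqref{eq:mon-reg} via the pointwise blow-up dichotomy you describe. Both routes close with Vitali and the higher integrability \eqref{eq:high-1}. Your approach is more self-contained (no appeal to the cited proposition) at the price of the pointwise case analysis on $\{p(z)<2\}$, which you handle correctly; the paper's approach is more quantitative but relies on an external lemma. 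One cosmetic point: the case split should be according to $p_m(z)\gtrless 2$ rather than $p(z)$, since the monotonicity bound \eqref{eq:mon-reg} applied to $\mathcal{F}_{\epsilon m}$ involves $p_m$; this makes no real difference because $p_m\nearrow p$ uniformly.
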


\begin{proof}
Let us multiply equations \eqref{eq:main-reg-1} for $u_m$ and $u_n$ by $w=u_m-u_n$, integrate over $\Omega$ and combine the results:
\begin{equation}
\label{eq:comb-1}
\begin{split}
\frac{1}{2} & \dfrac{d}{dt}\|w\|^2_{2,\Omega} +\int_{\Omega}(\mathcal{F}_{\epsilon m}(z,\nabla u_m)-\mathcal{F}_{\epsilon n}(z,\nabla u_n))\cdot \nabla w\,dx
\\
& = \int_{\Omega}(f_m-f_n)w\,dz
 +\int_{\Omega}\left[a_m(\epsilon^2+ u_m^2)^{\frac{q_m-2}{2}} u_m- a_n(\epsilon^2+ u_n^2)^{\frac{q_n-2}{2}} u_n\right] w \,dz
\\
& + \int_{\Omega}\left[(\epsilon^2+ |\nabla u_m|^2)^{\frac{s_m-2}{2}}(\vec c_m,\nabla u_m)- (\epsilon^2+ |\nabla u_n|^2)^{\frac{s_n-2}{2}}(\vec c_n,\nabla u_n)\right] w \,dz.
\end{split}
\end{equation}
Let us integrate this inequality over the interval $(0,T)$.
Applying the H\"older inequality and \eqref{eq:high-2}, \eqref{eq:high-3}, we
estimate the last two terms of the resulting inequality by $M\|w\|_{2,Q_T}$
with an independent of $m$, $n$ constant $M$. Inequality \eqref{eq:comb-1} is
then continued as follows:
\begin{equation}\label{eq:intermediate}
\begin{split}
\frac{1}{2}& \|w(T)\|^{2}_{2,\Omega}  +\int_{Q_T}(
\mathcal{F}_{\epsilon m}(z,\nabla u_m)-\mathcal{F}_{\epsilon m}(z,\nabla
u_n))\cdot \nabla w\,dz
\\
&
\leq \frac{1}{2}\|w(\cdot,0)\|_{2,\Omega}^2 + \|w\|_{2,Q_T}\|f_m-f_n\|_{2,Q_T}
+M\sqrt{T}\|w\|_{2,Q_T} \\ & \qquad + \int_{Q_T}(
\mathcal{F}_{\epsilon n}(z,\nabla u_n)-\mathcal{F}_{\epsilon m}(z,\nabla
u_n))\cdot \nabla w\,dz\\
&  \leq \frac{1}{2}\|w(\cdot,0)\|_{2,\Omega}^2 +
\|w\|_{2,Q_T}\|f_m-f_n\|_{2,Q_T}
\\
&
+M\sqrt{T}\|w\|_{2,Q_T} + C\sup_{Q_T} |p_m(z)
- p_n(z)|^{p_m'(z)}.
\end{split}
\end{equation}
To estimate the last term of the second inequality in
\eqref{eq:intermediate} we apply the Lagrange mean value
theorem (see the derivation of \eqref{eq:flux-cont}) and then follow the
proof of \cite[Lemma 3.9]{SSS-2022}. The constant $C$ depends on $\|\nabla
u_m\|_{p(\cdot),Q_T}$, $\|\nabla u_n\|_{p(\cdot),Q_T}$, which are already
estimated in \eqref{eq:high-4}.

By the choice of the sequences $\{f_m\}$, $\{u_{0m}\}$ and due to \eqref{eq:conv-1}, the right-hand side of inequality \eqref{eq:intermediate} tends to zero as $m,n\to \infty$. Let us denote
\begin{equation}
\label{eq:R}
\mathcal{R}_{\epsilon m}(\nabla u_m,\nabla u_n)= \int_{Q_T}(\mathcal{F}_{\epsilon m}(z,\nabla u_m)-\mathcal{F}_{\epsilon m}(z,\nabla u_n))\cdot \nabla w\,dz.
\end{equation}
Dropping the first nonnegative term on the left-hand side of \eqref{eq:intermediate} and taking into account \eqref{eq:mon-reg} we conclude that
\begin{equation}
\notag
\label{eq:F-limit}
0\leq \mathcal{R}_{\epsilon m}(\nabla u_m,\nabla u_n)\to 0\quad \text{as $m,n\to \infty$}.
\end{equation}
By \cite[Proposition 3.2]{Arora-Shm-RACSAM-2023} with $a(z)\equiv 1$, $b(z)\equiv 0$ and $\epsilon=0$ (see also \cite[Lemma 3.8]{SSS-2022}) we know that
\begin{equation}
\label{eq:R-conv}
\int_{Q_T}|\nabla (u_m-u_n)|^{p_m(z)}\,dz\leq C\left(\mathcal{R}_{\epsilon m}(\nabla u_m,\nabla u_n)+\mathcal{R}_{\epsilon m}^{\frac{p_m^-}{2}}(\nabla u_m,\nabla u_n)+\mathcal{R}_{\epsilon m}^{\frac{p_m^+}{2}}(\nabla u_m,\nabla u_n)\right)
\end{equation}
with a constant $C$ depending on $p^\pm$, $\|\nabla u_m\|_{p(\cdot),Q_T}$, $\|\nabla u_n\|_{p(\cdot),Q_T}$. Since the sequence $\{p_k\}$ is nondecreasing, we have $p_1 \leq p_m$ for all $m\in \mathbb{N}$. It follows that $\{\nabla u_m\}$ is a Cauchy sequence in $L^{p_1(\cdot)}(Q_T)^N$, and $\nabla u_m \to \nabla u$ a.e. in $\Omega$. Finally, the Vitali convergence theorem along with the uniform estimates in \eqref{eq:high-1} and \eqref{eq:high-4}, implies $\nabla u_m \to \nabla u$ in $(L^{p(\cdot)}(Q_T))^N.$
\end{proof}

For every test-function $\phi\in  \mathbf{W}_{p(\cdot)}(Q_T)$

\begin{equation}
\label{eq:def-eps}
\int_{Q_T}\left(u_{m t}\phi +\mathcal{F}_{\epsilon m}(z,\nabla u_m)\cdot \nabla \phi - f_m\phi - F_{\epsilon m}(z,u_m,\nabla u_m)\phi\right)\,dz=0.
\end{equation}
Lemma \ref{le:strong-conv-grad} and estimates \eqref{eq:high-1}, \eqref{eq:high-2} allow one to apply the Vitali convergence theorem:
\[
\begin{split}
& (\epsilon^2+|\nabla u_m|^2)^{\frac{s_m-2}{2}}\nabla u_m\to (\epsilon^2+|\nabla u|^2)^{\frac{s-2}{2}}\nabla u \quad \text{in $L^2(Q_T)^N$},
\\
& \mathcal{F}_{\epsilon m}(z,\nabla u_m)\to \mathcal{F}_{\epsilon}(z,\nabla u)\quad \text{in $L^{p'(\cdot)}(Q_T)^N$},
\end{split}
\]
which means that $\eta=\mathcal{F}_{\epsilon}(z,\nabla u)$ a.e. in $Q_T$. The a.e. convergence $u_m\to u$ and estimate \eqref{eq:high-3} imply
\[
(\epsilon^2+u_m^2)^{\frac{q_m-2}{2}}|u_m|\to (\epsilon^2+u^2)^{\frac{q-2}{2}}|u|\quad \text{in $L^2(Q_T)$}.
\]
Passing to the limit in \eqref{eq:def-eps} as $m\to \infty$ we obtain \eqref{eq:def-1} for the function $u(z)=\lim u_m(z)$. The initial condition is fulfilled by continuity because $u\in C^0([0,T];L^2(\Omega))$ by the Aubin-Lions-Simon lemma.

\begin{theorem}
\label{th:exist-reg} Let conditions \eqref{eq:p}-\eqref{eq:data} be fulfilled,
and $\partial\Omega\in C^{2+\alpha}$. For every $f\in L^{2}(Q_T)$, $u_0\in
W^{1,p_0(\cdot)}_0(\Omega)$, and $\epsilon\in (0,1)$ problem
\eqref{eq:main-reg} has a strong solution. The solution satisfies the estimate

\begin{equation}
\label{eq:unif-eps}
\begin{split}
\operatorname{ess}\sup_{(0,T)}\|u\|_{2,\Omega} & + \operatorname{ess}\sup_{(0,T)}\|\nabla u\|_{p(\cdot),\Omega}+\|u_{t}\|_{2,Q_T}
\leq C
\end{split}
\end{equation}
and possesses the property of global higher integrability of the gradient: for every $r\in \left(0,\frac{4}{N+2}\right)$

\begin{equation}
\label{eq:unif-high} \int_{Q_T}|\nabla u|^{2(p(z)-1)+r}\,dz\leq
\int_{Q_T}(\epsilon^2+|\nabla u|^2)^{p(z)-2+\frac{r}{2}}|\nabla u|^2\,dz\leq
C'.
\end{equation}
The constant $C$ depends on $N$, $\partial \Omega$, $L$, the structural constants in \eqref{eq:p}-\eqref{eq:data}, $\|f\|_{2,Q_T}$, $\|\nabla u_0\|_{p_0(\cdot),\Omega}$,   the constant $C'$ depends on the same quantities and
also on $r$; $C$, $C'$ are independent of $\epsilon$.
\end{theorem}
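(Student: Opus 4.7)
The plan is to obtain the strong solution of \eqref{eq:main-reg} as a limit point of the family $\{u_m\}$ of classical solutions to the approximating problems \eqref{eq:main-reg-1}, whose existence is guaranteed by Lemma \ref{le:existence-reg} once the data $(f_m,u_{0m},\textbf{data}_m)$ are chosen as in \eqref{eq:data-reg}-\eqref{eq:data-reg-0}. The estimates derived in the preceding sections are the key input: Lemma \ref{le:first-est} (with $\tau=1$) controls $\operatorname{ess\,sup}_{(0,T)}\|u_m\|_{2,\Omega}$ and $\|\nabla u_m\|_{p_m(\cdot),Q_T}$, while Lemma \ref{le:second-estimate} controls $\|u_{mt}\|_{2,Q_T}$, $\operatorname{ess\,sup}_{(0,T)}\|\nabla u_m\|_{p_m(\cdot),\Omega}$, and $\|\operatorname{div}\mathcal{F}_{\epsilon m}(z,\nabla u_m)\|_{2,Q_T}$ \emph{uniformly in} $m$ and $\epsilon$. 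Combining these with the interpolation estimate \eqref{eq:interpol-1} from Lemma \ref{le:principal-improved} yields the full package \eqref{eq:unif-est}-\eqref{eq:high-4}; in particular, the higher-integrability bound \eqref{eq:high-1} gives a uniform control of $|\nabla u_m|^{2(p_m-1)+r}$ in $L^1(Q_T)$ for any $r\in(0,\tfrac{4}{N+2})$.

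From the uniform bounds, a standard diagonal extraction produces a (not relabeled) subsequence together with limit functions $u,\eta,\zeta,\chi$ enjoying the convergences \eqref{eq:conv-1}. The step I expect to be the main obstacle is the identification of the nonlinear limits $\eta=\mathcal{F}_\epsilon(z,\nabla u)$ and $\chi=F_\epsilon(z,u,\nabla u)$, since the fluxes depend on $m$ not only through $\nabla u_m$ but also through the varying exponent $p_m$. This is precisely the content of Lemma \ref{le:strong-conv-grad}: one tests the difference of the equations for $u_m$ and $u_n$ against $u_m-u_n$, uses the monotonicity \eqref{eq:mon-reg} of the flux, and absorbs the discrepancy coming from $\mathcal{F}_{\epsilon n}-\mathcal{F}_{\epsilon m}$ via the H\"older-type estimate \eqref{eq:flux-cont} together with the uniform $L^{p(\cdot)}$ bound \eqref{eq:high-4}; the lower bound \eqref{eq:R-conv} then promotes convergence of the monotonicity quantity $\mathcal{R}_{\epsilon m}$ into convergence of $\nabla u_m$ in $L^{p_1(\cdot)}(Q_T)^N$, hence a.e., and finally in $L^{p(\cdot)}(Q_T)^N$ by the Vitali theorem and \eqref{eq:high-1}-\eqref{eq:high-4}.

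With strong convergence of the gradients in hand, the nonlinear source and flux pass to the limit: the a.e. convergence of $(u_m,\nabla u_m)$ combined with the uniform bounds \eqref{eq:high-2}-\eqref{eq:high-3} and the Vitali theorem identifies $\eta=\mathcal{F}_\epsilon(z,\nabla u)$ in $L^{p'(\cdot)}(Q_T)^N$ and $\chi=F_\epsilon(z,u,\nabla u)$ in $L^2(Q_T)$. Letting $m\to\infty$ in the weak formulation \eqref{eq:def-eps} then yields \eqref{eq:def-1} for $u$. The membership $u\in C^0([0,T];L^2(\Omega))$ and the attainment of the initial datum follow from the Aubin-Lions-Simon lemma applied to the sequence $\{u_m\}$, which is uniformly bounded in $L^\infty(0,T;L^2(\Omega))$ with $u_{mt}$ uniformly bounded in $L^2(Q_T)$.

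Finally, the uniform estimates \eqref{eq:unif-eps} and the higher-integrability bound \eqref{eq:unif-high} for the limit $u$ are obtained by weak (respectively $\star$-weak) lower semicontinuity of the corresponding norms and functionals, applied to the uniform-in-$m$ bounds of \eqref{eq:unif-est} and \eqref{eq:high-1}; the right-hand inequality in \eqref{eq:unif-high} follows from \eqref{eq:interpol-1} used on each approximation and then Fatou's lemma combined with the a.e. convergence of $\nabla u_m$. Since none of these constants depended on $\epsilon$ or $m$ (only on $N$, $\partial\Omega$, the structural constants, $\|f\|_{2,Q_T}$, and $\|\nabla u_0\|_{p_0(\cdot),\Omega}$), the resulting bounds on $u$ are also $\epsilon$-independent, which is crucial for the subsequent passage $\epsilon\to 0^+$ in the proofs of Theorems \ref{th:existence-degenerate} and \ref{th:reg-degenerate}.
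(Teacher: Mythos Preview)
Your proposal is correct and follows essentially the same route as the paper's own proof: uniform estimates from Lemmas \ref{le:first-est} and \ref{le:second-estimate} together with \eqref{eq:interpol-1} yield \eqref{eq:unif-est}-\eqref{eq:high-4}, compactness gives \eqref{eq:conv-1}, Lemma \ref{le:strong-conv-grad} upgrades to strong (hence a.e.) convergence of the gradients, and then Vitali's theorem identifies the nonlinear limits before passing to the limit in \eqref{eq:def-eps}, with $u\in C^0([0,T];L^2(\Omega))$ following from the Aubin-Lions-Simon lemma. Your explicit mention of Fatou/lower semicontinuity to transfer the bounds \eqref{eq:unif-eps}-\eqref{eq:unif-high} to the limit $u$ is a point the paper leaves implicit, and is the natural way to conclude.
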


\begin{theorem}
\label{th:reg-eps} Let the conditions of Theorem \ref{th:exist-reg} be
fulfilled. If $u$ is a strong solution of problem \eqref{eq:main-reg}, then
\[
(\epsilon^2+|\nabla u|^2)^{\frac{p(z)-2}{2}}D_i u\in W^{1,2}(Q_T),\quad
i=\overline{1,N},
\]
and
\begin{equation}
\label{eq:high-diff-0} \left\|(\epsilon^2+|\nabla u|^2)^{\frac{p(z)-2}{2}}D_i
u\right\|_{W^{1,2}(Q_T)}\leq C
\end{equation}
with $C$ depending on the same quantities as the constant $C'$ in \eqref{eq:unif-high} but independent of $\epsilon$.
\end{theorem}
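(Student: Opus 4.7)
My plan is to prove \eqref{eq:high-diff-0} by establishing a $W^{1,2}(Q_T)$-bound on the regularized fluxes $V_m:=\mathcal{F}_{\epsilon m}(z,\nabla u_m)$ of the classical approximations $u_m\in C^{2+\gamma,1+\gamma/2}(\overline{Q}_T)$ from Lemma \ref{le:existence-reg}, uniformly in $m$, and then passing to the limit $m\to\infty$. The strong convergence $V_m\to\mathcal{F}_\epsilon(z,\nabla u)$ in $L^{p'(\cdot)}(Q_T)^N$ already proven in Lemma \ref{le:strong-conv-grad}, combined with weak compactness in $W^{1,2}$, identifies the weak limit and yields \eqref{eq:high-diff-0} by lower semicontinuity of the $W^{1,2}$-norm.

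\textbf{Spatial derivatives.} Writing $V_m^{(j)}=G\,D_j u_m$ with $G=(\epsilon^2+|\nabla u_m|^2)^{(p_m-2)/2}$ and $\eta=\nabla u_m/(\epsilon^2+|\nabla u_m|^2)^{1/2}$, the chain rule produces three contributions to $D_i V_m^{(j)}$: the Hessian term $G\,D^2_{ij}u_m$, an anisotropic Hessian term with the unit-bounded factor $(p_m-2)\eta_j(\eta\cdot D_i\nabla u_m)$, and a logarithmic residual carrying $\nabla p_m$. After squaring and summing, the first two are dominated by $C\,G^2|u_{mxx}|^2$, whose $L^1(Q_T)$-norm is uniformly controlled by Lemma \ref{le:second-estimate}, while the logarithmic piece is bounded pointwise by $C(1+(\epsilon^2+|\nabla u_m|^2)^{p_m-2+\lambda}|\nabla u_m|^2)$ for any $\lambda>0$ via \eqref{eq:elem} and \eqref{eq:p-r}, and is therefore in $L^1(Q_T)$ uniformly by the higher integrability \eqref{eq:unif-high}.

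\textbf{Time derivative.} The analogous computation gives
\[
\partial_t V_m^{(j)} = G\bigl[D_j(u_m)_t + (p_m-2)\eta_j(\eta\cdot\nabla(u_m)_t)\bigr] + \tfrac{1}{2}G\ln(\epsilon^2+|\nabla u_m|^2)(p_m)_t\,D_j u_m,
\]
whose modulus squared is bounded by $C\,G^2|\nabla(u_m)_t|^2$ plus a log remainder that is absorbable exactly as above. The main step is then the uniform $L^1(Q_T)$-bound on $G^2|\nabla(u_m)_t|^2$. I would obtain it by differentiating equation \eqref{eq:main-reg-1} in $t$ and testing with $(u_m)_t$: since $(u_m)_t|_{\partial\Omega\times(0,T)}=0$, integration by parts in space converts the elliptic term into $\int_{Q_T}\nabla(u_m)_t\cdot\partial_t V_m\,dz$, and the standard $p$-Laplacian monotonicity of type \eqref{eq:mon-reg} shows that this integral is bounded below by $c\min(1,p_m-1)\int_{Q_T}G\,|\nabla(u_m)_t|^2\,dz$ modulo a log term absorbable by Young's inequality combined with \eqref{eq:unif-high}. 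The resulting identity places $\tfrac{1}{2}\|(u_m)_t(0)\|_{2,\Omega}^2$ on the right-hand side, and this is evaluated from the equation at $t=0$, $(u_m)_t(0)=f_m(\cdot,0)+F_{\epsilon m}((\cdot,0),u_{0m},\nabla u_{0m})+\operatorname{div}\mathcal{F}_{\epsilon m}((\cdot,0),\nabla u_{0m})$; the remaining right-hand side terms are controlled by $\|f\|_{2,Q_T}$, the source coefficients, and $\|\nabla u_0\|_{p_0(\cdot),\Omega}$.

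\textbf{Main obstacle and conclusion.} The delicate point is the uniform control of $\|(u_m)_t(0)\|_{2,\Omega}$, equivalently the $L^2(\Omega)$-bound on $\operatorname{div}\mathcal{F}_{\epsilon m}((\cdot,0),\nabla u_{0m})$, purely in terms of $\|\nabla u_0\|_{p_0(\cdot),\Omega}$; this forces the approximation \eqref{eq:data-reg}--\eqref{eq:data-reg-0} to be chosen so that the stationary elliptic flux divergence at $t=0$ is uniformly bounded, a requirement that is compatible with the choice of $\{u_{0m}\}$ in $W_0^{1,p_0(\cdot)}(\Omega)$ together with an application of Lemma \ref{le:principal-improved} at the initial slice. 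Once this step is in place, the uniform $W^{1,2}(Q_T)$-bound on $\{V_m\}$ yields, along a subsequence, $V_m\rightharpoonup \chi$ in $W^{1,2}(Q_T)^N$; Lemma \ref{le:strong-conv-grad} identifies $\chi=\mathcal{F}_\epsilon(z,\nabla u)$, and \eqref{eq:high-diff-0} follows from weak lower semicontinuity.
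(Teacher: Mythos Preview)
Your treatment of the \emph{spatial} derivatives $D_j\mathcal{F}^{(i)}_{\epsilon m}(z,\nabla u_m)$ coincides with the paper's argument: expand via the chain rule into a Hessian piece, an anisotropic Hessian piece with factor $|\vec\eta|<1$, and a logarithmic remainder carrying $\nabla p_m$; bound the first two by $C(\epsilon^2+|\nabla u_m|^2)^{p_m-2}|(u_m)_{xx}|^2$ and invoke Lemma~\ref{le:second-estimate}, and absorb the log term by \eqref{eq:interpol-1}--\eqref{eq:interpol-2}. The passage to the limit via weak $L^2$-compactness and identification through the pointwise convergence of gradients (Lemma~\ref{le:strong-conv-grad}) is also the paper's route.

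Where you go beyond the paper is the \emph{time derivative} $\partial_t\mathcal{F}^{(i)}_{\epsilon m}$, and here your proposal has a genuine gap. The obstacle you flag---a uniform bound on $\|(u_m)_t(\cdot,0)\|_{2,\Omega}$, i.e.\ on $\|\operatorname{div}\mathcal{F}_{\epsilon m}((\cdot,0),\nabla u_{0m})\|_{2,\Omega}$---cannot be resolved the way you suggest. Lemma~\ref{le:principal-improved} goes the wrong direction: it controls $\int(\epsilon^2+|\nabla v|^2)^{p-2}|v_{xx}|^2$ \emph{by} $\int(\operatorname{div}\mathcal{F}_\epsilon)^2$, not conversely. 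More fundamentally, for a generic $u_0\in W_0^{1,p_0(\cdot)}(\Omega)$ there is no reason for $\operatorname{div}\mathcal{F}_\epsilon((\cdot,0),\nabla u_0)$ to lie in $L^2(\Omega)$, so no choice of smooth approximants $u_{0m}\to u_0$ in $W_0^{1,p_0(\cdot)}(\Omega)$ can keep $\|\operatorname{div}\mathcal{F}_{\epsilon m}((\cdot,0),\nabla u_{0m})\|_{2,\Omega}$ uniformly bounded. A secondary issue is that differentiating \eqref{eq:main-reg-1} in $t$ and testing with $(u_m)_t$ presupposes the existence of $\nabla (u_m)_t$ and $(u_m)_{tt}$, which the established regularity $u_m\in C^{2+\gamma,1+\gamma/2}(\overline{Q}_T)$ does not directly provide (only $C^3$ in $x$ for each fixed $t$ is shown).

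Note that the paper's own proof of Theorem~\ref{th:reg-eps} handles \emph{only} the spatial derivatives $D_j$, $j=\overline{1,N}$; the $W^{1,2}(Q_T)$ in the statement is effectively understood in the sense of the analogy with $L^2(0,T;W_0^{2,2}(\Omega))$ mentioned in the introduction. So your spatial argument is both correct and aligned with the paper, and the problematic time-derivative step is not needed to reproduce the result as the paper proves it.
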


\begin{proof}
Let $u_m$ be the solution of problem \eqref{eq:main-reg-1}. For all $i,j=\overline{1,N}$
\[
\begin{split}
D_j \left((\epsilon^2+|\nabla u_m|^2)^{\frac{p_m-2}{2}}D_i u_m\right) & = (\epsilon^2+|\nabla u_m|^2)^{\frac{p_m-2}{2}}D^2_{ij} u_m
\\
& + (p_m-2) (\epsilon^2+|\nabla u_m|^2)^{\frac{p_m-2}{2}-1}\sum_{k=1}^ND_k u_m D^{2}_{kj}u_m D_iu_m
\\
& + (\epsilon^2+|\nabla u_m|^2)^{\frac{p_m-2}{2}}D_iu_m \ln (\epsilon^2+|\nabla u_m|^2)D_j p_m.
\end{split}
\]
Using \eqref{eq:interpol-1}, \eqref{eq:interpol-2} and Lemma \ref{le:second-estimate} we conclude that
\begin{equation}
\label{eq:high-diff-1}
\begin{split}
& \left\|D_j \left((\epsilon^2+|\nabla u_m|^2)^{\frac{p_m-2}{2}}D_i u_m\right)\right\|_{2,Q_T}^2
\\
& \qquad \qquad
\leq C+C'\left\|(\epsilon^2+|\nabla u_m|^2)^{\frac{p_m-2}{2}}|(u_m)_{xx}|\right\|_{2,Q_T}^2\leq C''
\end{split}
\end{equation}
with a constant $C''$ depending only on the data and independent of $m$ and $\epsilon$. It follows that there is $\Theta_{ij}^{(\epsilon)}\in L^2(Q_T)$ and a subsequence of $\{u_m\}$ (for which we keep the same notation) such that
\begin{equation}
\label{eq:high-diff-2}
D_j \left((\epsilon^2+|\nabla u_m|^2)^{\frac{p_m-2}{2}}D_i u_m\right) \rightharpoonup \Theta_{ij}^{(\epsilon)} \quad \text{in $L^2(Q_T)$}.
\end{equation}
To identify the limit $\Theta_{ij}^{(\epsilon)}$ we use the pointwise
convergence of the sequence of gradients: for every $\psi\in
C_0^{\infty}(Q_T)$
\[
\begin{split}
\left(D_j\psi,(\epsilon^2+|\nabla u|^2)^{\frac{p-2}{2}}D_i u\right)_{2,Q_T} & = \lim_{m\to \infty}\left(D_j\psi,(\epsilon^2+|\nabla u_m|^2)^{\frac{p_m-2}{2}}D_i u_m\right)_{2,Q_T}
\\
& =- \lim_{m\to\infty}\left(\psi,D_j\left((\epsilon^2+|\nabla u_m|^2)^{\frac{p_m-2}{2}}D_i u_m\right)\right)_{2,Q_T}
\\
& = -(\psi,\Theta_{ij}^{(\epsilon)})_{2,Q_T}.
\end{split}
\]
Estimate \eqref{eq:high-diff-0} follows from \eqref{eq:high-diff-1}, \eqref{eq:high-diff-2}. Indeed, for every $i,j=\overline{1,N}$, we have
\[
\begin{split}
\|\Theta_{ij}^{(\epsilon)}\|^2_{L^2(\Omega)} & =\langle \Theta_{ij}^{(\epsilon)}, \Theta_{ij}^{(\epsilon)}\rangle_{2, \Omega}
\\
&
= \lim_{m \to \infty} \left\langle D_j \left((\epsilon^2+|\nabla u_m|^2)^{\frac{p_m-2}{2}}D_i u_m\right), \Theta_{ij}^{(\epsilon)}\right\rangle_{2, \Omega} \\
& \leq \|\Theta_{ij}^{(\epsilon)}\|_{L^2(\Omega)} \lim_{m \to \infty} \left\|D_j \left((\epsilon^2+|\nabla u_m|^2)^{\frac{p_m-2}{2}}D_i u_m\right)\right\|_{L^2(\Omega)}
\\
&
\leq C'' \|\Theta_{ij}^{(\epsilon)}\|_{L^2(\Omega)}
\end{split}
\]
which implies
\[
\|\Theta_{ij}^{(\epsilon)}\|_{L^2(\Omega)} \leq C''
\]
with a constant $C''$ independent of $\epsilon.$
\end{proof}

\begin{cor}
\label{cor:div-m}
The same arguments allow one to identify $\zeta$ in \eqref{eq:conv-1}: $\zeta=\operatorname{div}\mathcal{F}_\epsilon(z,\nabla u)$ and $\|\zeta\|_{2,Q_T}$ is bounded by the constant from \eqref{eq:unif-est}.
\end{cor}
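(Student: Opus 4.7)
The plan is to mimic the identification argument used for $\Theta_{ij}^{(\epsilon)}$ in the proof of Theorem \ref{th:reg-eps}. Pick an arbitrary test function $\psi \in C_0^{\infty}(Q_T)$ and write, using the weak convergence $\operatorname{div}\mathcal{F}_{\epsilon m}(z,\nabla u_m) \rightharpoonup \zeta$ in $L^2(Q_T)$ from \eqref{eq:conv-1} and the integration-by-parts formula valid for the smooth approximations,
\[
(\psi,\zeta)_{2,Q_T} = \lim_{m\to\infty} \bigl(\psi,\operatorname{div}\mathcal{F}_{\epsilon m}(z,\nabla u_m)\bigr)_{2,Q_T} = -\lim_{m\to\infty}\bigl(\nabla\psi,\mathcal{F}_{\epsilon m}(z,\nabla u_m)\bigr)_{2,Q_T}.
\]

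Next I would invoke the strong convergence $\mathcal{F}_{\epsilon m}(z,\nabla u_m)\to \mathcal{F}_\epsilon(z,\nabla u)$ in $L^{p'(\cdot)}(Q_T)^N$, already established right after Lemma \ref{le:strong-conv-grad} via Lemma \ref{le:strong-conv-grad} (pointwise convergence of the gradients), the growth estimate \eqref{eq:growth}, the uniform bound \eqref{eq:high-4}, and Vitali's theorem. Since $\nabla\psi$ has compact support in $Q_T$ and belongs to every $L^{p(\cdot)}(Q_T)^N$, the generalized Hölder inequality \eqref{eq:gen-Holder} gives
\[
-\lim_{m\to\infty}\bigl(\nabla\psi,\mathcal{F}_{\epsilon m}(z,\nabla u_m)\bigr)_{2,Q_T} = -\bigl(\nabla\psi,\mathcal{F}_\epsilon(z,\nabla u)\bigr)_{2,Q_T} = \bigl(\psi,\operatorname{div}\mathcal{F}_\epsilon(z,\nabla u)\bigr)_{2,Q_T},
\]
where the last equality is the definition of distributional divergence. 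Comparing, $\zeta = \operatorname{div}\mathcal{F}_\epsilon(z,\nabla u)$ holds in $\mathcal{D}'(Q_T)$, and because $\zeta\in L^2(Q_T)$, the identity is an $L^2$-identity; in particular $\operatorname{div}\mathcal{F}_\epsilon(z,\nabla u)\in L^2(Q_T)$.

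Finally, to obtain the quantitative bound, I appeal to the lower semicontinuity of the $L^2$-norm under weak convergence together with the uniform estimate on $\|\operatorname{div}\mathcal{F}_{\epsilon m}(z,\nabla u_m)\|_{2,Q_T}$ from \eqref{eq:unif-est}:
\[
\|\zeta\|_{2,Q_T} \leq \liminf_{m\to\infty}\|\operatorname{div}\mathcal{F}_{\epsilon m}(z,\nabla u_m)\|_{2,Q_T}\leq C,
\]
with $C$ independent of $\epsilon$. I do not anticipate any serious obstacle here: the only delicate point is the passage to the limit under the duality pairing, and this is handled exactly as in the identification of $\eta$ as $\mathcal{F}_\epsilon(z,\nabla u)$, where the strong $L^{p'(\cdot)}$-convergence of the fluxes has already been verified.
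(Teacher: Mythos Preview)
Your proposal is correct and follows essentially the same route as the paper: the paper does not give a separate proof for this corollary but simply points to the identification argument for $\Theta_{ij}^{(\epsilon)}$ in the proof of Theorem~\ref{th:reg-eps}, which is precisely the test-function and integration-by-parts scheme you have written out. The bound on $\|\zeta\|_{2,Q_T}$ via lower semicontinuity under weak convergence is also the intended mechanism.
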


\begin{cor}
\label{cor:strong-def} For every $\phi\in L^2(Q_T)$
\[
\int_{Q_T}\left(u_t-\operatorname{div}\mathcal{F}_\epsilon(z,\nabla u)-f-F_\epsilon(z,u,\nabla u)\right)\phi\,dz=0.
\]
\end{cor}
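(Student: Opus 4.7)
The plan is to reinterpret the weak identity \eqref{eq:def-1} as an $L^2$-identity tested against a dense class of functions, and then extend to arbitrary $\phi\in L^2(Q_T)$ by continuity. First I would confirm that each of the four functions $u_t$, $\operatorname{div}\mathcal{F}_\epsilon(z,\nabla u)$, $f$, $F_\epsilon(z,u,\nabla u)$ lies in $L^2(Q_T)$, so that the integrand on the left-hand side belongs to $L^1(Q_T)$ for every $\phi\in L^2(Q_T)$ and the associated linear functional on $L^2(Q_T)$ is bounded. The bounds $\|u_t\|_{2,Q_T}\leq C$ and $\|\operatorname{div}\mathcal{F}_\epsilon(z,\nabla u)\|_{2,Q_T}\leq C$ are provided by \eqref{eq:unif-eps} and Corollary \ref{cor:div-m}; $f\in L^2(Q_T)$ is part of the data; and the $L^2(Q_T)$-bound for $F_\epsilon(z,u,\nabla u)$ follows from the Vitali-type limits identifying the nonlinear terms, together with the uniform estimates \eqref{eq:high-2}--\eqref{eq:high-3} passed to the limit.

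Next, for any fixed $\phi\in C_0^\infty(Q_T)\subset \mathbf{W}_{p(\cdot)}(Q_T)$, I would plug $\phi$ into the weak formulation \eqref{eq:def-1} and integrate the flux term by parts. Since $\operatorname{div}\mathcal{F}_\epsilon(z,\nabla u)\in L^2(Q_T)$ and $\phi$ has compact support in $Q_T$, no boundary contributions appear, and
\[
\int_{Q_T}\mathcal{F}_\epsilon(z,\nabla u)\cdot\nabla\phi\,dz=-\int_{Q_T}\phi\,\operatorname{div}\mathcal{F}_\epsilon(z,\nabla u)\,dz.
\]
Substituting into \eqref{eq:def-1} yields $\int_{Q_T}G\,\phi\,dz=0$ for every $\phi\in C_0^\infty(Q_T)$, where
\[
G:=u_t-\operatorname{div}\mathcal{F}_\epsilon(z,\nabla u)-f-F_\epsilon(z,u,\nabla u)\in L^2(Q_T).
\]

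Finally, since $C_0^\infty(Q_T)$ is dense in $L^2(Q_T)$ and the map $\phi\mapsto\int_{Q_T}G\,\phi\,dz$ is a continuous linear functional on $L^2(Q_T)$ that vanishes on a dense subset, I conclude $G\equiv 0$ in $L^2(Q_T)$, which is equivalent to the claimed identity. There is no genuine obstacle in this corollary: the substantive work has already been carried out in proving $\operatorname{div}\mathcal{F}_\epsilon(z,\nabla u)\in L^2(Q_T)$ (Corollary \ref{cor:div-m}), which legitimizes the integration by parts, and in identifying the weak limit of $F_{\epsilon m}(z,u_m,\nabla u_m)$ as $F_\epsilon(z,u,\nabla u)$ via the Vitali convergence theorem; only the density/duality step remains.
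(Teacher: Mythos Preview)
Your argument is correct; the paper states this corollary without proof, and your route---integrate by parts in \eqref{eq:def-1} for $\phi\in C_0^\infty(Q_T)$ (legitimate since $\mathcal{F}_\epsilon^{(i)}(z,\nabla u)\in W^{1,2}(Q_T)$ by Theorem \ref{th:reg-eps}), then extend to all of $L^2(Q_T)$ by density---is precisely the natural way to supply the details. An equivalent shortcut, perhaps closer to the paper's flow of argument, is to use that each $u_m$ is a classical solution, test the pointwise identity $u_{mt}-\operatorname{div}\mathcal{F}_{\epsilon m}-f_m-F_{\epsilon m}=0$ against an arbitrary $\phi\in L^2(Q_T)$, and pass to the limit directly via the weak $L^2$ convergences in \eqref{eq:conv-1} together with Corollary \ref{cor:div-m}; this avoids integrating by parts at the limit level but relies on the same ingredients.
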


\subsection{The degenerate problem}
The existence and regularity results for the degenerate problem
\eqref{eq:main} are obtained by the arguments similar to the proofs of
Theorems \ref{th:exist-reg}, \ref{th:reg-eps}. Let us denote by
$u^{(\epsilon)}$ the strong solution of problem \eqref{eq:main-reg} with the
data $f$, $u_{0}$ constructed in Theorem \ref{th:exist-reg}.

\begin{proof}[Proof of Theorem \ref{th:existence-degenerate}]

The uniform estimates
\eqref{eq:unif-eps}, \eqref{eq:high-diff-0} imply that the family
$\{u^{(\epsilon)}\}$ contains a sequence $\{u^{(\epsilon_k)}\}$, for which we
will use the same notation  $\{u^{(\epsilon)}\}$, with the following convergence properties:
\begin{equation}
\label{eq:conv-2}
\begin{split}
& \text{$u^{(\epsilon)} \to u$ $\star$-weakly in $L^\infty(0,T;L^2(\Omega))$, strongly in
$L^2(Q_T)$ and a.e. in $Q_T$},
\\
& \text{$u^{(\epsilon)}_{t}\rightharpoonup u_t$ in $L^2(Q_T)$},\quad \text{$\nabla u_m
\rightharpoonup \nabla u$ in $L^{p(\cdot)}(Q_T)^N$},
\\
& \text{$\mathcal{F}_\epsilon (z,\nabla u^{(\epsilon)})\rightharpoonup \eta$ in
$L^{p'(\cdot)}(Q_T)^N$},
\\
& \text{$F_\epsilon(z,u^{(\epsilon)},\nabla u^{(\epsilon)})\rightharpoonup \chi$ in $L^2(Q_T)$}
\end{split}
\end{equation}
for some $u$, $\eta$, $\chi$. Combining equalities \eqref{eq:def-1} for $u^{(\epsilon)}$ with the test-functions $u^{(\epsilon)}$ and $\phi\in \mathbf{W}_{p(\cdot)}(Q_T)$ we obtain the equality
\[
\begin{split}
\int_{Q_T} & (\mathcal{F}_{\epsilon}(z,\nabla u^{(\epsilon)})-\mathcal{F}_{\epsilon}(z,\nabla \phi))\cdot \nabla (u^{(\epsilon)}-\phi)\,dz = -\int_{Q_T}u^{(\epsilon)}_{t}(u^{(\epsilon)}-\phi)\,dz
\\
&
-\int_{Q_T}(\mathcal{F}_{\epsilon}(z,\nabla \phi)-\mathcal{F}(z,\nabla \phi))\cdot \nabla (u^{(\epsilon)}-\phi)\,dz -\int_{Q_T} \mathcal{F}(z,\nabla \phi))\cdot \nabla (u^{(\epsilon)}-\phi)\,dz
\\
& + \int_{Q_T} f (u^{(\epsilon)}-\phi)\,dz +\int_{Q_T}F_{\epsilon}(z,u^{(\epsilon)},\nabla u^{(\epsilon)}) (u^{(\epsilon)}-\phi)\,dz
\end{split}
\]
whence, by monotonicity of the flux $\mathcal{F}_{\epsilon}$,
\[
\begin{split}
0 & \leq -\int_{Q_T}u^{(\epsilon)}_{t}(u^{(\epsilon)}-\phi)\,dz -\int_{Q_T}(\mathcal{F}_{\epsilon}(z,\nabla \phi)-\mathcal{F}(z,\nabla \phi))\cdot \nabla (u^{(\epsilon)}-\phi)\,dz
\\
&
-\int_{Q_T} \mathcal{F}(z,\nabla \phi))\cdot \nabla (u^{(\epsilon)}-\phi)\,dz
\\
& +\int_{Q_T} f (u^{(\epsilon)}-\phi)\,dz +\int_{Q_T} F_{\epsilon}(z,u^{(\epsilon)},\nabla u^{(\epsilon)}) (u^{(\epsilon)}-\phi)\,dz
= \sum_{i=1}^5\mathcal{I}_i.
\end{split}
\]
Each of $\mathcal{I}_i$ has a limit as $\epsilon\to 0$.  As in the case of the regularized problem,
\[
\mathcal{I}_1\to \displaystyle -\int_{Q_T}u_t(u-\phi)\,dz,\qquad \mathcal{I}_5\to \int_{Q_T}\chi (u-\phi)\,dz
\]
as the limits of weakly and strongly convergent sequences,
\[
\mathcal{I}_3\to \displaystyle -\int_{Q_T}\mathcal{F}(z,\nabla \phi)\cdot \nabla (u-\phi)\,dz
\]
because $\mathcal{F}(z,\nabla \phi)\in L^{p'(\cdot)}(Q_T)^N$ and $\nabla u^{(\epsilon)}\rightharpoonup \nabla u$ in $L^{p(\cdot)}(Q_T)^N$,
\[
\mathcal{I}_4\to \displaystyle \int_{Q_T} f (u-\phi)\,dz.
\]
To estimate $\mathcal{I}_2$ we apply the generalized H\"older inequality \eqref{eq:gen-Holder}
\[
\begin{split}
\mathcal{I}_2 & \leq 2\left(\|\nabla u^{(\epsilon)}\|_{p(\cdot),Q_T}+\|\nabla \phi \|_{p(\cdot),Q_T}\right)\left\|\mathcal{F}_{\epsilon}(z,\nabla \phi)-\mathcal{F}(z,\nabla \phi)\right\|_{p'(\cdot),Q_T}.
\end{split}
\]
The expression in the brackets is uniformly bounded. The second factor tends
to zero by the dominated convergence theorem because
$\mathcal{F}_{\epsilon}(z,\nabla \phi)-\mathcal{F}(z,\nabla \phi)\to 0$ a.e.
in $Q_T$ and there is the integrable majorant:

\[
|\mathcal{F}_{\epsilon}(z,\nabla \phi)-\mathcal{F}(z,\nabla \phi)|^{p'(z)}\leq C\left(1+|\nabla \phi|^{p(z)}\right)\in L^1(Q_T).
\]
It follows that for every $\phi \in \mathbf{W}_{p(\cdot)}(Q_T)$
\[
0\leq -\int_{Q_T}\left(u_t(u-\phi)+\mathcal{F}(z,\nabla\phi) \cdot \nabla (u-\phi))- f (u-\phi) -\chi(u-\phi) \right)\,dz.
\]
On the other hand, choosing $u-\phi$ for the test-function in \eqref{eq:def-1} for $u^{(\epsilon)}$, we have
\[
0=\int_{Q_T}\left(u_{t}(u-\phi) +\eta\cdot \nabla (u-\phi) - f (u-\phi) - \chi(u-\phi)\right)\,dz.
\]
Summing these relations we find that
\[
\int_{Q_T}\left(\eta-\mathcal{F}(z,\nabla \phi)\right)\cdot \nabla (u-\phi)\,dz\geq 0\qquad \forall \phi\in \mathbf{W}_{p(\cdot)}(Q_T).
\]
Let us choose $\phi=u\pm \lambda \zeta$ with an arbitrary $\zeta\in
\mathbf{W}_{p(\cdot)}(Q_T)$ and a constant $\lambda >0$. Simplifying by
$\lambda$ and then letting $\lambda\to 0^+$ we have
\[
\pm \int_{Q_T}\left(\eta-\mathcal{F}(z,\nabla u)\right)\cdot \nabla \zeta\,dz\geq 0\qquad \forall \zeta\in \mathbf{W}_{p(\cdot)}(Q_T).
\]
It follows that
\[
0=\int_{Q_T}\left(u_{t}\zeta +\mathcal{F}(z,\nabla u)\cdot \nabla \zeta - f \zeta - \chi \zeta\right)\,dz\qquad \forall \zeta\in \mathbf{W}_{p(\cdot)}(Q_T).
\]
Let us identify the limit function $\chi$. Choose $u^{(\epsilon)}-u$ for the test-function in identities \eqref{eq:def-1} for $u^{(\epsilon)}$ and $u$ and combine the results:
\[
\begin{split}
\int_{Q_T} & (\mathcal{F}_{\epsilon}(z,\nabla u^{(\epsilon)})-\mathcal{F}_{\epsilon}(z,\nabla u))\cdot \nabla (u^{(\epsilon)}-u)\,dz =-\int_{Q_T}(u^{(\epsilon)}_{t}-u_t)(u^{(\epsilon)}-u)\,dz
\\
&
+\int_{Q_T}\left(F_{\epsilon}(z,u^{(\epsilon)},\nabla u^{(\epsilon)})-\chi\right) (u^{(\epsilon)}-u) \,dz
\\
&
\qquad +\int_{Q_T} (\mathcal{F}(z,\nabla u)-\mathcal{F}_{\epsilon}(z,\nabla u))\cdot \nabla (u^{(\epsilon)}-u)\,dz \equiv \mathcal{J}_1+\mathcal{J}_2+\mathcal{J}_3.
\end{split}
\]
All terms on the right-hand side of this equality tend to zero as $\epsilon\to 0$. By \eqref{eq:unif-high} and \eqref{eq:conv-1}
\[
\begin{split}
& |\mathcal{J}_1|\leq \left(\|u^{(\epsilon)}_t\|_{2,Q_T}+\|u_t\|_{2,Q_T}\right)\|u^{(\epsilon)}-u\|_{2,Q_T}\to 0,
\\
&
|\mathcal{J}_2|\leq \left(\|F_{\epsilon}(z,u^{(\epsilon)},\nabla u^{(\epsilon)})\|_{2,Q_T}+\|\chi\|_{2,Q_T}\right)\|u^{(\epsilon)}-u\|_{2,Q_T}\leq C\|u^{(\epsilon)}-u\|_{2,Q_T}\to 0.
\end{split}
\]
To estimate $\mathcal{J}_3$ we write
\[
|\mathcal{J}_3|\leq 2\left(\|\nabla u^{(\epsilon)}\|_{p(\cdot),Q_T}+\|\nabla u\|_{p(\cdot),Q_T}\right)\|\mathcal{F}(z,\nabla u)-\mathcal{F}_{\epsilon}(z,\nabla u)\|_{p'(\cdot),Q_T}.
\]
The expression in the brackets is uniformly bounded while the second factor tends to zero by the dominated convergence theorem: $\mathcal{F}(z,\nabla u)-\mathcal{F}_{\epsilon}(z,\nabla u)\to 0$ a.e. in $Q_T$ and there is the integrable majorant
\[
|\mathcal{F}(z,\nabla u)-\mathcal{F}_{\epsilon}(z,\nabla u)|^{p'(z)}\leq C(1+|\nabla u|^{p(z)})\in L^1(Q_T).
\]
It follows that
\[
\int_{Q_T} (\mathcal{F}_{\epsilon}(z,\nabla u^{(\epsilon)})-\mathcal{F}_{\epsilon}(z,\nabla u))\cdot \nabla (u^{(\epsilon)}-u)\,dz \to 0\quad \text{as $\epsilon\to 0$},
\]
whence, by \eqref{eq:mon-reg} and \eqref{eq:R-conv} with $p_m=p$,
\[
\text{$\nabla u^{(\epsilon)}\to \nabla u$ in $L^{p(\cdot)}(Q_T)^N$ and a.e. in $Q_T$}.
\]
The pointwise convergence $u^{(\epsilon)}\to u$, $\nabla u^{(\epsilon)}\to \nabla u$ and the uniform estimates \eqref{eq:unif-high} entail the convergence
\[
\begin{split}
&
(\epsilon^2+|\nabla u^{(\epsilon)}|^2)^{\frac{s-2}{2}} \nabla u^{(\epsilon)} \to |\nabla u|^{s-2}\nabla u \quad \text{in $L^2(Q_T)^N$},
\\
& (\epsilon^2+ (u^{(\epsilon)})^2)^{\frac{q-2}{2}} u^{(\epsilon)} \to |u|^{q-2} u \quad \text{in $L^2(Q_T)$}.
\end{split}
\]
It follows that $F_{\epsilon}(z,u^{(\epsilon)},\nabla u^{(\epsilon)})\to F(z,u,\nabla u)$ in $L^2(Q_T)$. Passing to the limit in \eqref{eq:def-1} for $u^{(\epsilon)}$ as $\epsilon \to 0$ we obtain \eqref{eq:def-1} for the function $u(z)=\lim u^{(\epsilon)}(z)$.
\end{proof}

\begin{proof}[Proof of Theorem \ref{th:reg-degenerate}]
Let $\Theta_{ij}^{(\epsilon)}$ be the functions chosen in  \eqref{eq:high-diff-2}. By Theorem \ref{th:reg-eps}, $\|\Theta_{ij}^{(\epsilon)}\|_{2,Q_T}\leq C$
with an independent of $\epsilon$ constant $C$, and there exists $\Theta_{ij} \in L^2(Q_T)$ such that $\Theta_{ij}^{(\epsilon)} \rightharpoonup \Theta_{ij}$ in $L^2(Q_T)$ (up to a subsequence). Now we make use of the pointwise convergence of the gradients established in the proof of Theorem \ref{th:existence-degenerate}. For every $\psi\in C_0^{\infty}(Q_T)$
\[
\begin{split}
\left(D_j\psi,|\nabla u|^{p-2}D_i u\right)_{2,Q_T} & = \lim_{\epsilon \to 0}\left(D_j\psi,(\epsilon^2+|\nabla u^{(\epsilon)}|^2)^{\frac{p-2}{2}}D_i u^{(\epsilon)}\right)_{2,Q_T}
\\
& =- \lim_{\epsilon \to 0}\left(\psi,D_j\left((\epsilon^2+|\nabla u^{(\epsilon)}|^2)^{\frac{p-2}{2}}D_i u^{(\epsilon)}\right)\right)_{2,Q_T}
\\
& = -(\psi,\Theta_{ij})_{2,Q_T},
\end{split}
\]
which means that $\Theta_{ij}=D_j\left(|\nabla u|^{p-2}D_i u\right)$ in $L^2(Q_T)$ and
\[
\left\|D_j\left(|\nabla u|^{p-2}D_i u\right)\right\|_{2,Q_T}\leq C
\]
with the constant $C$ from Theorem \ref{th:reg-eps}.
\end{proof}

\section{$C^2$-smooth and convex domains: proof of Theorem \ref{th:convex-domain}}
\label{sec:convex-domain} We begin with the case of a bounded convex domain
$\Omega$. There exists a sequence of smooth convex domains $\{\Omega_k\}$,
$\Omega_{k}\Subset \Omega_{k+1}$, $k=1,2,\ldots$, approximating $\Omega$ from
the interior: $\Omega_k\uparrow \Omega$ as $k\to \infty$. To construct such a
sequence take a smooth convex exhaustion of $\Omega$, that is, a smooth,
negative, convex function $\psi$ such that $\lim_{x\to
\partial\Omega}\psi(x)=0$, and choose for $\Omega_k$ the sub-level sets of
$\psi$:
\[
\Omega_k=\left\{x\in \Omega:\,\psi(x)<-{1}/{k}\right\}.
\]
For the existence of an exhaustion function we refer to \cite{Blocki-1997}.

Let $\{u_k\}$ be the sequence of solutions of the problems
\begin{equation}
\label{eq:main-k}
\begin{split}
& u_{k,t}-\operatorname{div}\left(|\nabla u_k|^{p(z)-2}\nabla
u_k\right)=f_k(z)+F(z,u_k,\nabla u_k)\quad \text{in $Q_{T,k}=\Omega_k\times
(0,T)$},
\\
& \text{$u_k=0$ on $\partial\Omega_k\times (0,T)$},
\\
& \text{$u_k(x,0)=u_{0,k}(x)$ in $\Omega$},
\end{split}
\end{equation}
where $u_{0,k}\in W_0^{1,p(\cdot)}(\Omega_k)$, $u_{0,k}=0$ in $\Omega\setminus
\Omega_k$, $u_{0,k}\to u_0$ in $W_0^{1,p(\cdot)}(\Omega)$, and $f_k(z)=f(z)$
in $Q_{T,k}$, $f_k=0$ in $Q_T\setminus Q_{T,k}$. By Theorem
\ref{th:existence-degenerate} problem \eqref{eq:main-k} has a solution $u_k$
satisfying estimates \eqref{eq:unif-est-degenerate}-\eqref{eq:higher-m}.
Define the functions
\[
w_k(z)=\begin{cases} u_k & \text{if $z\in Q_{T,k}$},
\\
0 & \text{if $z\in Q_T\setminus Q_{T,k}$}.
\end{cases}
\]
Since each of $w_k$ satisfies the uniform estimates
\eqref{eq:unif-est-degenerate}-\eqref{eq:higher-m}, the sequence $\{w_k\}$ has
the convergence properties similar to \eqref{eq:conv-2}: there are functions
$w$, $\eta$, $\chi$ such that  (up to a subsequence)
\begin{equation}
\label{eq:conv-3}
\begin{split}
& \text{$w_k\to w$ $\star$-weakly in $L^\infty(0,T;L^2(\Omega))$, strongly in
$L^2(Q_T)$ and a.e. in $Q_T$},
\\
& \text{$w_{k,t}\rightharpoonup w_t$ in $L^2(Q_T)$},\quad \text{$\nabla w_k
\rightharpoonup \nabla w$ in $L^{p(\cdot)}(Q_T)^N$},
\\
& \text{$|\nabla w_k|^{p(z)-2}\nabla w_k \rightharpoonup \eta$ in
$L^{p'(\cdot)}(Q_T)^N$},
\\
& \text{$F(z,w_k,\nabla w_k)\rightharpoonup \chi$ in $L^2(Q_T)$}.
\end{split}
\end{equation}
Moreover, by virtue of \eqref{eq:p-est-3}
\begin{equation}
\label{eq:higher-int-k}
\int_{Q_T}|\nabla w_k|^{2(p-1)+r}\,dz=\int_{\Omega_k\times (0,T)}|\nabla
u_k|^{2(p-1)+r}\,dz\leq C,
\end{equation}
whence
\begin{equation}
\label{eq:higher-convex} \int_{Q_T}|\nabla w|^{2(p-1)+r}\,dz\leq C,\qquad r\in
\left(0,\frac{4}{N+2}\right),
\end{equation}
with a constant $C$ independent of $\partial\Omega$.

Let us check that the limit function $w$ is a strong solution of problem
\eqref{eq:main}. Let $\phi\in C^\infty([0,T];C_0^\infty(\Omega))$.  For all sufficiently large $k \in \mathbb{N}$ we have $\operatorname{supp}\phi\Subset Q_{T,k}$ and

\begin{equation}
\label{eq:def-2} \int_{Q_T}\left(w_{k,t}\phi+|\nabla w_k|^{p-2}\nabla w_k\cdot
\nabla \phi -f_k\phi -F(z,w_k,\nabla w_k)\phi\right)\,dz=0.
\end{equation}
Letting $k\to \infty$ and using \eqref{eq:conv-3} we find that
\begin{equation}
\label{eq:def-3} \int_{Q_T}\left(w_{t}\phi+\eta\cdot \nabla \phi -f\phi
-\chi\phi\right)\,dz=0.
\end{equation}
Since $p(\cdot)$ is Lipschitz-continuous in $Q_T$, the set $C^\infty([0,T];C_0^\infty(\Omega))$ is dense in $\mathbf{W}_{p(\cdot)}(Q_T)$ (see \cite[Sec.4]{Die-Nag-Ruz-2012}), equality \eqref{eq:def-3} holds true for every test-function $\phi\in \mathbf{W}_{p(\cdot)}(Q_T)$.

\begin{lemma}
\label{le:conv-grad}
The sequence $\{\nabla w_k\}$ contains a subsequence that converges to $\nabla w$ a.e. in $Q_T$.
\end{lemma}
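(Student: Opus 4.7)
The plan is to run the Minty-type monotonicity argument that has already appeared in the proof of Theorem \ref{th:existence-degenerate}, now on the exhausting cylinder $Q_{T,k}$. The key quantity to control is
\[
\mathcal{R}_k:=\int_{Q_T}\bigl(|\nabla w_k|^{p(z)-2}\nabla w_k-|\nabla w|^{p(z)-2}\nabla w\bigr)\cdot \nabla(w_k-w)\,dz,
\]
which is non-negative by the monotonicity \eqref{eq:mon-reg} of the flux $\mathcal{F}(z,\xi)=|\xi|^{p(z)-2}\xi$ taken in its $\epsilon=0$ form. Once $\mathcal{R}_k\to 0$ is established, the analogue of \eqref{eq:R-conv} for the degenerate flux (valid by the same proof as in \cite{Arora-Shm-RACSAM-2023}) will give $\nabla w_k\to\nabla w$ in $L^{p(\cdot)}(Q_T)^N$, from which a subsequence converges a.e.

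The first step is to identify $\lim_k\int_{Q_T}|\nabla w_k|^{p(z)}\,dz$. I would take $\phi=w_k$ as a test function in \eqref{eq:def-2}, which is admissible because $u_k|_{\partial\Omega_k}=0$ and $w_k$ vanishes outside $Q_{T,k}$, and take $\phi=w$ in \eqref{eq:def-3}. The time-derivative contribution is dealt with by the splitting
\[
\int_{Q_T}w_{k,t}w_k\,dz=\int_{Q_T}w_{k,t}(w_k-w)\,dz+\int_{Q_T}w_{k,t}w\,dz,
\]
in which the first piece is bounded by $\|w_{k,t}\|_{2,Q_T}\|w_k-w\|_{2,Q_T}\to 0$ using the strong $L^2$-convergence of $\{w_k\}$ from \eqref{eq:conv-3} together with the uniform bound $\|w_{k,t}\|_{2,Q_T}\leq C$ coming from \eqref{eq:unif-est-degenerate}, while the second piece tends to $\int_{Q_T}w_tw\,dz$ by the weak $L^2$-convergence $w_{k,t}\rightharpoonup w_t$. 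Combining the two energy identities and using the strong $L^2$-convergence of $w_k$, the convergence $f_k\to f$ in $L^2(Q_T)$ and the weak convergence $F(z,w_k,\nabla w_k)\rightharpoonup\chi$, I obtain $\lim_k\int_{Q_T}|\nabla w_k|^{p(z)}\,dz=\int_{Q_T}\eta\cdot\nabla w\,dz$.

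The second step is to expand $\mathcal{R}_k$ into four terms and take the limit. The diagonal piece $\int_{Q_T}|\nabla w_k|^{p(z)}\,dz$ tends to $\int_{Q_T}\eta\cdot\nabla w\,dz$ by the previous step; the mixed pieces converge respectively to $\int_{Q_T}\eta\cdot\nabla w\,dz$ and $\int_{Q_T}|\nabla w|^{p(z)}\,dz$ by pairing the weak $L^{p'(\cdot)}$-convergence $\mathcal{F}(z,\nabla w_k)\rightharpoonup\eta$ with $\nabla w\in L^{p(\cdot)}(Q_T)^N$ and the weak $L^{p(\cdot)}$-convergence $\nabla w_k\rightharpoonup\nabla w$ with $\mathcal{F}(z,\nabla w)\in L^{p'(\cdot)}(Q_T)^N$; the last piece $\int_{Q_T}|\nabla w|^{p(z)}\,dz$ cancels. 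Hence $\mathcal{R}_k\to 0$, and the analogue of \eqref{eq:R-conv} finishes the argument. The principal delicacy is that $w$ cannot be used directly as a test function in the equation for $w_k$, since its trace on $\partial\Omega_k\times(0,T)$ need not vanish; this rules out the straightforward Browder--Minty choice $\phi=w_k-w$. The splitting above is precisely the workaround: it uses only the convergences already recorded in \eqref{eq:conv-3} and sidesteps any compactness statement for the time traces $w_k(\cdot,T)$.
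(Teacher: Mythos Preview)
Your argument is correct and genuinely different from the paper's. The paper localizes: it fixes $\omega\Subset\Omega$, takes a cutoff $\psi$ supported in some $\Omega_{k_0}$, tests the equations for $w_m$ and $w_k$ against $(w_m-w_k)\psi$ (which is admissible for both once $m,k>k_0$), shows that the resulting monotonicity integral over $\omega\times(0,T)$ tends to zero so that $\{\nabla w_k\}$ is Cauchy in $L^{p(\cdot)}(\omega\times(0,T))^N$, and then runs a diagonal argument over an exhaustion $\omega_s\uparrow\Omega$ to get a.e.\ convergence on all of $Q_T$. Your route instead exploits the already-established limit identity \eqref{eq:def-3} with the admissible test function $\phi=w$, together with the energy identity for $w_k$ tested against itself; the splitting $\int w_{k,t}w_k=\int w_{k,t}(w_k-w)+\int w_{k,t}w$ is exactly what lets you pass to the limit without controlling the terminal traces $w_k(\cdot,T)$. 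This gives $\mathcal{R}_k\to 0$ and hence strong $L^{p(\cdot)}$-convergence on the whole cylinder in one stroke, with no cutoff and no diagonalization. The paper's approach has the advantage of being self-contained---it never invokes the limit equation or the identification of $\eta$---while yours is shorter and yields global strong convergence directly; both are legitimate here since \eqref{eq:def-3} is derived immediately before the lemma.
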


\begin{proof}
Take an arbitrary set $\omega\Subset \Omega$, denote $d_\omega=\operatorname{dist}(\omega,\partial\Omega)$ and pick a smooth function $\psi$ such that
\[
0\leq \psi(x)\leq 1, \qquad \psi(x)\equiv \begin{cases} 1 & \text{for $x\in \omega$},
\\
0 & \text{if $\operatorname{dist}(x,\omega)  > \frac{d_\omega}{2}$},
\end{cases}
\qquad
|\nabla \psi |\leq \frac{C}{d_\omega}.
\]
There exists $k_0\in \mathbb{N}$ so large that $\psi(x)=0$ in $\Omega\setminus\Omega_{k_0}$. Let $m,k>k_0$. Combining identities \eqref{eq:def-2} for $w_m$ and $w_k$ with the
test-function $\phi=(w_m-w_k)\psi$ we obtain the equality

\begin{equation}
\label{eq:inter-prim}
\begin{split}
\frac{1}{2} & \|\sqrt{\psi}(w_m-w_k)\|^2_{2,\Omega}(T)+\int_{Q_T}\psi\left(|\nabla
w_m|^{p-2}\nabla w_m-|\nabla w_k|^{p-2}\nabla w_k\right)\cdot \nabla (w_m-w_k)\,dz
\\
& = \frac{1}{2}\|\sqrt{\psi}(w_m-w_k)\|^2_{2,\Omega}(0)+\int_{Q_T}(f_m-f_k)\phi\,dz
+\int_{Q_T}(F(z,w_m,\nabla w_m)- F(z,w_k,\nabla w_k))\phi\,dz
\\
& \quad - \int_{Q_T}(w_m-w_k)\left(|\nabla
w_m|^{p-2}\nabla w_m-|\nabla w_k|^{p-2}\nabla w_k\right)\cdot \nabla \psi \,dz\equiv \sum_{i=1}^4\mathcal{I}_i.
\end{split}
\end{equation}
By the choice of $\{u_{0,k}\}$ and $f_k$, and due to \eqref{eq:conv-3}

\[
|\mathcal{I}_1|+ |\mathcal{I}_2|\leq \frac{1}{2}\|u_{0,m}-u_{0,k}\|_{2,\Omega}^2+\|f_k-f_m\|_{2,Q_T}\|w_m-w_k\|_{2,Q_T}\to 0\quad \text{as $m,k\to \infty$}.
\]
By virtue of estimates \eqref{eq:unif-est-degenerate}-\eqref{eq:higher-m} for the functions $u_m$ in $Q_{T,m}$ and $w_m$ in $Q_T$ and the assumptions on the structure of the equation
$\|F(z,w_m,\nabla w_m)\|_{2,Q_T}\leq C_F$ with an independent of $m$ constant $C_F$. Thus,

\[
|\mathcal{I}_3|\leq 2C_F\|w_k-w_m\|_{2,Q_T}\to 0\quad \text{as $m,k\to \infty$}.
\]
Because of the higher integrability property \eqref{eq:higher-int-k}

\[
\begin{split}
|\mathcal{I}_4| & \leq \dfrac{C}{d_\omega}\left(\||\nabla w_m|^{p-1}\|_{2,Q_T}+\||\nabla w_k|^{p-1}\|_{2,Q_T}\right)\|w_m-w_k\|_{2,Q_T}
\\
& \leq C'\|w_m-w_k\|_{2,Q_T}\to 0\quad \text{as $m,k\to \infty$}.
\end{split}
\]
Dropping the first nonnegative term on the left-hand side of \eqref{eq:inter-prim}, we find that

\[
\begin{split}
0\leq \int_{\omega\times (0,T)} & \left(|\nabla
w_k|^{p-2}\nabla w_k-|\nabla w_m|^{p-2}\nabla w_m\right)\cdot \nabla (w_m-w_k)\,dz\to 0\quad \text{as $m,k\to \infty$}.
\end{split}
\]
We now follow the proof of Lemma \ref{le:strong-conv-grad}. Using an analog of \eqref{eq:R-conv}, we
find that $\|\nabla (w_k-w_m)\|_{p(\cdot),\omega\times (0,T)}\to 0$ as $m,k\to \infty$. It follows
that $\{\nabla w_k\}$ is a Cauchy sequence in $L^{p(\cdot)}(\omega\times (0,T))^N$, and contains a subsequence that converges to $\nabla w$ a.e. in $\omega\times (0,T)$. Taking an expanding sequence of domains $\{\omega_s\}$, $\omega_s\uparrow \Omega$ as $s\to \infty$, and applying the diagonal procedure we extract a subsequence of $\{\nabla w_k\}$ that converges to $\nabla w$ a.e. in $Q_T$.
\end{proof}
By \cite[Ch.1,Lemma 1.3]{Lions-1969} we conclude that $\eta=|\nabla w|^{p-2}\nabla w$ and $\chi=F(z,w,\nabla w)$,
as required. Since $w$ satisfies \eqref{eq:def-3} with every test-function
$\phi\in \mathbf{W}_{p(\cdot)}(Q_T)$, $w$ is a strong solution of problem
\eqref{eq:main}. The inclusion $w\in C^0([0,T];L^2(\Omega))$ follows from the
Aubin-Lions lemma.

To prove the second-order regularity we follow the proof of Theorem
\ref{th:reg-degenerate}. Since $|\nabla u_k|^{p-2}\nabla u_k\in
W^{1,2}(Q_{T,k})$ for every $k$, using the diagonal process we may choose a
subsequence $\{w_k\}$ and a function $\Theta_{ij}$ such that

\[
D_i\left(|\nabla w_k|^{p-2}D_iw_k\right)\rightharpoonup \Theta_{ij} \quad
\text{in $L^2(Q_{T,m})$ for every $m$}.
\]
Take an arbitrary function $\psi\in C_0^\infty(Q_T)$ and fix $k_0\in
\mathbb{N}$ such that $\operatorname{supp}\psi\subset \Omega_{k_0}\times
(0,T)$. Then
\[
\begin{split}
\left(D_j\psi,|\nabla w|^{p-2}D_iw\right)_{2,Q_T} & = \lim_{k\to \infty, k\geq
k_0}\left(D_j\psi,|\nabla w_k|^{p-2}D_iw_k\right)_{2,Q_T}
\\
& = \lim_{k\to \infty, k\geq k_0}\left(D_j\psi,|\nabla
w_k|^{p-2}D_iw_k\right)_{2,Q_{T,k}}
\\
& = - \lim_{k\to \infty, k\geq k_0}\left(\psi,D_j\left(|\nabla
w_k|^{p-2}D_iw_k\right)\right)_{2,Q_{T,k_0}}
\\
& = - (\psi,\Theta_{ij})_{2,Q_{T,k_0}}
\\
& = - (\psi,\Theta_{ij})_{2,Q_{T}}.
\end{split}
\]
Let $\partial\Omega\in C^2$. There exists a sequence $\{\Omega_k\}$,
$\Omega_k\Subset\Omega_{k+1}$, $\Omega_k\uparrow \Omega$, such that
$\partial\Omega_k\in C^\infty$ and the parametrizations of $\partial\Omega_k$
are uniformly bounded in $C^2$, see \cite[Subsec.7.4]{Arora-Shm-RACSAM-2023}
for the choice of $\Omega_k$. Let $\{u_k\}$ be the sequence of solutions to
problems \eqref{eq:main-k} in the cylinders $Q_{T,k}$. The a priori estimates
for $u_k$ depend on the $C^2$-norms of the parametrizations of
$\partial\Omega_k$, which are independent of $k$. The proof is completed as in
the case of the convex domain $\Omega$.

%

\

\noindent On behalf of all authors, the corresponding author states that there is no conflict of interest.

\end{document}